\newtheorem{theorem}[equation]{Theorem}
\newtheorem{lemma}[equation]{Lemma}
\newtheorem{proposition}[equation]{Proposition}
\newtheorem{corollary}[equation]{Corollary}
\theoremstyle{definition}
\newtheorem{definition}[equation]{Definition}
\theoremstyle{remark}
\newtheorem{remark}[equation]{Remark}
\numberwithin{equation}{section}
\newcommand{\dmal}[1]{\begin{align*}{#1}\end{align*}}              
\newcommand{\newf}[3]{{#1}:{#2}\longrightarrow {#3}}					 
\newcommand{\newfd}[5]{\begin{array}{cccc}{#1}: &{#2} &\longrightarrow &{#3} \\& {#4} &\longmapsto &{#5}\end{array}}
\newcommand{\la}[1]{\text{$\mathcal{#1}$}}
\newcommand{\lb}[1]{\text{$\mathscr{#1}$}}
\newcommand{\powerset}[1]{\text{$\lb{P}(#1)$}}								
\newcommand{\eword}{\text{$\omega$}}											
\newcommand{\xia}{\text{$\xi^\alpha$}}											
\newcommand{\usetr}[2]{\text{$\uparrow_{\scriptscriptstyle{#2}}\hspace{-0.1cm}{#1}$}} 
\newcommand{\dgraphg}[1]{\text{$\lb{#1}$}}									
\newcommand{\dgraphupleg}[3]{\text{$\dgraphg{#1}=(\dgraphg{#1}^0,\dgraphg{#1}^1,#2,#3)$}}	
\newcommand{\dgraph}{\text{$\dgraphg{E}$}}									
\newcommand{\dgraphuple}{\text{$\dgraphupleg{E}{r}{s}$}}					
\newcommand{\alfg}[1]{\text{$\lb{#1}$}}										
\newcommand{\acfg}[1]{\text{$\lb{#1}$}}										
\newcommand{\lbfg}[1]{\text{$\lb{#1}$}}										
\newcommand{\acfrg}[1]{\text{$\acfg{B}_{#1}$}}								
\newcommand{\alf}{\text{$\alfg{A}$}}											
\newcommand{\acf}{\text{$\acfg{B}$}}											
\newcommand{\lbf}{\text{$\lbfg{L}$}}											
\newcommand{\acfra}{\text{$\acfg{B}_{\alpha}$}}								
\newcommand{\lgraphg}[2]{\text{$(\dgraphg{#1},\lbfg{#2})$}}				
\newcommand{\lspaceg}[3]{\text{$(\dgraphg{#1},\lbfg{#2},\acfg{#3})$}}
\newcommand{\lgraph}{\text{$\lgraphg{E}{L}$}}								
\newcommand{\lspace}{\text{$\lspaceg{E}{L}{B}$}}							
\newcommand{\awsetg}[2]{\text{$\lbfg{#1}^{#2}$}}	
\newcommand{\awplus}{\text{$\awsetg{L}{\scriptscriptstyle{\geq 1}}$}}								
\newcommand{\awn}[1]{\text{$\awsetg{L}{#1}$}}								
\newcommand{\awstar}{\text{$\awsetg{L}{\ast}$}}							
\newcommand{\awinf}{\text{$\awsetg{L}{\infty}$}}							
\newcommand{\awleinf}{\text{$\awsetg{L}{\scriptscriptstyle{\leq\infty}}$}}					
\newcommand{\fset}[1]{\text{$\textsf{#1}$}}										
\newcommand{\filt}{\text{$\fset{F}$}}											
\newcommand{\ftight}{\text{$\textsf{T}$}}										
\newcommand{\ftightw}[1]{\text{$\textsf{T}_{#1}$}}
\newcommand{\ftg}[1]{\text{$\la{#1}$}}											
\newcommand{\ft}{\text{$\ftg{F}$}}												
\newcommand{\card}[1]{\# #1}
\newcommand{\scj}{\text{$\subseteq$}}
\newcommand{\nn}{\text{$\mathbb{N}$}}
\begin{document}

\title[Groupoid models for labelled spaces]{Groupoid models for the C*-algebra of labelled spaces}

\author[G. Boava \and G. de Castro \and F. Mortari]{Giuliano Boava \and Gilles G. de Castro \and Fernando de L. Mortari}
\address{Departamento de Matemática, Universidade Federal de Santa Catarina, 88040-970 Florianópolis SC, Brazil.}
\email{g.boava@ufsc.br \\ gilles.castro@ufsc.br \\ \newline fernando.mortari@ufsc.br}
\keywords{C*-algebra, labelled space, groupoid}
\subjclass[2010]{Primary: 46L55, Secondary: 20M18, 05C20, 05C78}


\begin{abstract}
We define a groupoid from a labelled space and show that it is isomorphic to the tight groupoid arising from an inverse semigroup associated with the labelled space. We then define a local homeomorphism on the tight spectrum that is a generalization of the shift map for graphs, and show that the defined groupoid is isomorphic to the Renault-Deaconu groupoid for this local homeomorphism. Finally, we show that the C*-algebra of this groupoid is isomorphic to the C*-algebra of the labelled space as introduced by Bates and Pask.
\end{abstract}

\maketitle

\section{Introduction}

The definition of a C*-algebra associated with a labelled graph, or more precisely a labelled \emph{space}, was first given by Bates and Pask in \cite{MR2304922}. Among the examples given by them were the classes of graph C*-algebras \cite{MR1432596,MR2135030}, ultragraph C*-algebras \cite{MR2050134} and Carlsen-Matsumoto algebras \cite{MR2091486,MR2380472}. These encompass other important classes of C*-algebras such as Cuntz algebras \cite{MR0467330}, Cuntz-Krieger algebras \cite{MR561974} and Exel-Laca algebras \cite{MR1703078}. The original definition was later revised in \cite{MR2542653} and then in \cite{MR3614028}. We point out that the authors in \cite{MR3680957} independently found the revised definition given in \cite{MR3614028}.

One powerful tool when studying C*-algebras is to describe them as groupoid C*-algebras as pioneered by Renault \cite{MR584266}. Several of the classes mentioned above were shown to be isomorphic to a groupoid C*-algebra \cite{MR584266,MR1432596,MR1770333,MR1962477,MR2457327,MR2680816}.

The main goal of this paper is to prove that the C*-algebra associated with a labelled space is also isomorphic to a groupoid C*-algebra. This was partially done for the countable case in a paper by Carlsen, Ortega and Pardo \cite[Theorem 8.3 and Example 11.1]{MR3606190} where they work in the setting of Boolean dynamical systems, under an additional assumption on the labelled space. However, certain C*-algebras, such as those associated with shift spaces, when seen from the point of view of labelled graphs, deal with uncountable sets (in the case of the shift, the vertices are the points of the shift). Their approach was to use a universal property of groupoid C*-algebras described by Exel in \cite{MR2419901} in the case of second countable étale groupoids. In a footnote of \cite{MR2419901}, Exel observes that it might be possible to circumvent the second countability hypothesis by working out the results therein using Baire $\sigma$-algebras instead of Borel $\sigma$-algebras, and it is thus possible that Carlsen, Ortega and Pardo's results could be generalized to the uncountable case. We follow a different path, more in the lines of what was done by Kumjian, Pask, Raeburn and Renault \cite{MR1432596}.

Our approach is also heavily based in the framework developed by Exel in \cite{MR2419901}, working with inverse semigroups, their semilattices of idempotents  and tight spectra. The interplay between inverse semigroups, groupoids and C*-algebras already appears in Renault's monograph \cite{MR584266} and in Paterson's book \cite{MR1724106}. Paterson associated a universal groupoid with an inverse semigroup, however in several cases this was not the ``correct'' groupoid, in a sense; it was also necessary to restrict this groupoid by looking at the object used to define the inverse semigroup.  One of Exel's main motivations was to find the correct groupoid solely with the inverse semigroup.

The original purpose of \cite{MR3648984} and \cite{MR3680957} was to lay down the framework for the results now presented here. In \cite{MR3648984}, the authors defined an inverse semigroup associated with a labelled space by mimicking the product of elements inside the C*-algebras associated with this labelled space; contrary to what is done by Carlsen, Ortega and Pardo in \cite{MR3606190}, however, the elements of the inverse semigroup are not seen inside the C*-algebra and, as it turns out, they need not even be the same \cite[Example~7.3]{MR3680957}. We then gave a description of the tight spectrum associated with the defined inverse semigroup that is somewhat similar to the \emph{boundary path space} of a graph \cite{MR3119197} and, depending on the labelled space, they actually coincide \cite[Proposition~6.9]{MR3648984}.

The boundary path space of a graph is the unit space of the groupoid that gives the graph C*-algebra; it is also the spectrum of a commutative C*-subalgebra, called the \emph{diagonal} subalgebra. The authors extended this result to the case of labelled spaces \cite{MR3680957} and, in doing so, the revised definition for labelled space C*-algebras mentioned in the first paragraph was found.

After reviewing the necessary tools, we begin our paper by describing, from a labelled space, a groupoid isomorphic to the tight groupoid defined by Exel \cite{MR2419901}, but concretely instead of as a quotient. First we work in a purely algebraic setting and then in a topological setting. By using some \emph{cutting} and \emph{gluing} maps defined in \cite{MR3680957}, we build a local homeomorphism and show that our groupoid can be seen as a Renault-Deaconu groupoid \cite{MR584266,MR1233967,MR1770333}. We then prove that the C*-algebra of this groupoid is isomorphic to the C*-algebra associated with the labelled space.

\section{Preliminaries}

\subsection{Labelled spaces}\label{subsect.labelled.spaces}

A \emph{(directed) graph} $\dgraphuple$ consists of non-empty sets $\dgraph^0$ (of \emph{vertices}), $\dgraph^1$ (of \emph{edges}), and \emph{range} and \emph{source} functions $r,s:\dgraph^1\to \dgraph^0$. A vertex $v$ such that $s^{-1}(v)=\emptyset$ is called a \emph{sink}, and the set of all sinks is denoted by $\dgraph^0_{sink}$.

A \emph{path of length $n$} on a graph $\dgraph$ is a sequence $\lambda=\lambda_1\lambda_2\ldots\lambda_n$ of edges such that $r(\lambda_i)=s(\lambda_{i+1})$ for all $i=1,\ldots,n-1$. We write $|\lambda|=n$ for the length of $\lambda$ and regard vertices as paths of length $0$. $\dgraph^n$ stands for the set of all paths of length $n$ and $\dgraph^{\ast}=\cup_{n\geq 0}\dgraph^n$. We extend the range and source maps to $\dgraph^{\ast}$ by defining $s(\lambda)=s(\lambda_1)$ and $r(\lambda)=r(\lambda_n)$ if $n\geq 2$ and $s(v)=v=r(v)$ for $n=0$. Similarly, we define a \emph{path of infinite length} (or an \emph{infinite path}) as an infinite sequence $\lambda=\lambda_1\lambda_2\ldots$ of edges such that $r(\lambda_i)=s(\lambda_{i+1})$ for all $i\geq 1$; for such a path, we write $|\lambda|=\infty$; $\dgraph^{\infty}$ denotes the set of all infinite paths.

A \emph{labelled graph} consists of a graph $\dgraph$ together with a surjective \emph{labelling map} $\lbf:\dgraph^1\to\alf$, where $\alf$ is a fixed non-empty set, called an \emph{alphabet}, and whose elements are called \emph{letters}. $\alf^{\ast}$ stands for the set of all finite \emph{words} over $\alf$, together with the \emph{empty word} \eword, and $\alf^{\infty}$ is the set of all infinite words over $\alf$. 	A labelled graph is said to be \emph{left-resolving} if for each $v\in\dgraph^0$ the restriction of $\lbf$ to $r^{-1}(v)$ is injective.

The labelling map $\lbf$ extends in the obvious way to $\lbf:\dgraph^n\to\alf^{\ast}$ and $\lbf:\dgraph^{\infty}\to\alf^{\infty}$. $\awn{n}=\lbf(\dgraph^n)$ is the set of \emph{labelled paths $\alpha$ of length $|\alpha|=n$}, and $\awinf=\lbf(\dgraph^{\infty})$ is the set of \emph{infinite labelled paths}. We consider $\eword$ as a labelled path with $|\eword|=0$, and set $\awplus=\cup_{n\geq 1}\awn{n}$, $\awstar=\{\eword\}\cup\awplus$, and $\awleinf=\awstar\cup\awinf$.

For $\alpha\in\awstar$ and $A\in\powerset{\dgraph^0}$ (the power set of $\dgraph^0$), the \emph{relative range of $\alpha$ with respect to} $A$ is the set
\[r(A,\alpha)=\{r(\lambda)\ |\ \lambda\in\dgraph^{\ast},\ \lbf(\lambda)=\alpha,\ s(\lambda)\in A\}\]
if $\alpha\in\awplus$ and $r(A,\eword)=A$ if $\alpha=\eword$. The \emph{range of $\alpha$}, denoted by $r(\alpha)$, is the set \[r(\alpha)=r(\dgraph^0,\alpha),\]
so that $r(\eword)=\dgraph^0$ and, if $\alpha\in\awplus$, then  $r(\alpha)=\{r(\lambda)\in\dgraph^0\ |\ \lbf(\lambda)=\alpha\}$.

We also define \[\lbf(A\dgraph^1)=\{\lbf(e)\ |\ e\in\dgraph^1\ \mbox{and}\ s(e)\in A\}=\{a\in\alf\ |\ r(A,a)\neq\emptyset\}.\]

A labelled path $\alpha$ is a \emph{beginning} of a labelled path $\beta$ if $\beta=\alpha\beta'$ for some labelled path $\beta'$; also, $\alpha$ and $\beta$ are \emph{comparable} if either one is a beginning of the other. If $1\leq i\leq j\leq |\alpha|$, let $\alpha_{i,j}=\alpha_i\alpha_{i+1}\ldots\alpha_{j}$ if $j<\infty$ and $\alpha_{i,j}=\alpha_i\alpha_{i+1}\ldots$ if $j=\infty$. If $j<i$ set $\alpha_{i,j}=\eword$.

A \emph{labelled space} is a triple $\lspace$ where $\lgraph$ is a labelled graph and $\acf$ is a family of subsets of $\dgraph^0$ that is closed under finite intersections and finite unions, contains all $r(\alpha)$ for $\alpha\in\awplus$, and is \emph{closed under relative ranges}, that is, $r(A,\alpha)\in\acf$ for all $A\in\acf$ and all $\alpha\in\awstar$. If in addition $\acf$ is closed under relative complements, we say the labelled space is \emph{normal}. Finally, a labelled space $\lspace$ is \emph{weakly left-resolving} if for all $A,B\in\acf$ and all $\alpha\in\awplus$ we have $r(A\cap B,\alpha)=r(A,\alpha)\cap r(B,\alpha)$.

For a given $\alpha\in\awstar$, let \[\acfra=\acf\cap\powerset{r(\alpha)}.\] If the labelled space is weakly left-resolving and normal, then the set $\acfra$ is a Boolean algebra for each $\alpha\in\awplus$, and  $\acfrg{\eword}=\acf$ is a generalized Boolean algebra as in \cite{MR1507106}. By Stone duality  there is a topological space associated with each $\acfra$ with $\alpha\in\awstar$, which we denote by $X_{\alpha}$, consisting of the set of ultrafilters in $\acfra$. 

\subsection{The inverse semigroup of a labelled space}\label{subsection:inverse.semigroup}

For a given labelled space $\lspace$ \emph{that is weakly left-resolving}, consider the set \[S=\{(\alpha,A,\beta)\ |\ \alpha,\beta\in\awstar\ \mbox{and}\ A\in\acfrg{\alpha}\cap\acfrg{\beta}\ \mbox{with}\ A\neq\emptyset\}\cup\{0\}.\]
A binary operation on $S$ is defined as follows: $s\cdot 0= 0\cdot s=0$ for all $s\in S$ and, given $s=(\alpha,A,\beta)$ and $t=(\gamma,B,\delta)$ in $S$, \[s\cdot t=\left\{\begin{array}{ll}
(\alpha\gamma ',r(A,\gamma ')\cap B,\delta), & \mbox{if}\ \  \gamma=\beta\gamma '\ \mbox{and}\ r(A,\gamma ')\cap B\neq\emptyset,\\
(\alpha,A\cap r(B,\beta '),\delta\beta '), & \mbox{if}\ \  \beta=\gamma\beta '\ \mbox{and}\ A\cap r(B,\beta ')\neq\emptyset,\\
0, & \mbox{otherwise}.
\end{array}\right. \]
If for a given $s=(\alpha,A,\beta)\in S$ we define $s^*=(\beta,A,\alpha)$, then the set $S$, endowed with the operation above, is an inverse semigroup with zero element $0$ (\cite{MR3648984}, Proposition 3.4), whose semilattice of idempotents is \[E(S)=\{(\alpha, A, \alpha) \ | \ \alpha\in\awstar \ \mbox{and} \ A\in\acfra\}\cup\{0\}.\]

The natural order in the semilattice $E(S)$ is described in the next proposition.

\begin{proposition}[\cite{MR3648984}, Proposition 4.1]
	Let $p=(\alpha, A, \alpha)$ and $q=(\beta, B, \beta)$ be non-zero elements in $E(S)$. Then $p\leq q$ if and only if $\alpha=\beta\alpha'$ and $A\subseteq r(B,\alpha')$.
\end{proposition}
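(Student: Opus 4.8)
The plan is to use the standard description of the natural partial order on the semilattice of idempotents of an inverse semigroup: for $p,q\in E(S)$ one has $p\leq q$ if and only if $pq=p$ (equivalently $qp=p$, since idempotents commute). As $p$ and $q$ are assumed non-zero and $pq=p$ forces $pq\neq 0$, the entire argument reduces to computing the product $pq=(\alpha,A,\alpha)\cdot(\beta,B,\beta)$ via the multiplication rule of $S$ and determining precisely when it equals $p$.

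First I would evaluate $pq$ by feeding $s=(\alpha,A,\alpha)$ and $t=(\beta,B,\beta)$ into the definition of the product, where the words to be compared are the third coordinate $\alpha$ of $s$ and the first coordinate $\beta$ of $t$. The product is non-zero in exactly two situations: either $\beta=\alpha\gamma'$ for some $\gamma'$ (first branch), giving $pq=(\alpha\gamma',\,r(A,\gamma')\cap B,\,\beta)$; or $\alpha=\beta\beta'$ for some $\beta'$ (second branch), giving $pq=(\alpha,\,A\cap r(B,\beta'),\,\alpha)$, after using $\beta\beta'=\alpha$ in the last coordinate.

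Next I would impose $pq=p$ in each branch. In the first branch, comparing first coordinates forces $\alpha\gamma'=\alpha$, hence $\gamma'=\eword$ and $\beta=\alpha$; then $r(A,\eword)=A$ and the requirement $pq=p$ becomes $A\cap B=A$, that is, $A\subseteq B=r(B,\eword)$, which is exactly the claimed condition with $\alpha'=\eword$. In the second branch, $\alpha=\beta\beta'$ is precisely $\alpha=\beta\alpha'$ with $\alpha'=\beta'$, and $pq=p$ reads $A\cap r(B,\alpha')=A$, i.e.\ $A\subseteq r(B,\alpha')$. Conversely, assuming $\alpha=\beta\alpha'$ and $A\subseteq r(B,\alpha')$, the standing hypothesis $A\neq\emptyset$ yields $A\cap r(B,\alpha')=A\neq\emptyset$, so the second branch applies and produces $pq=(\alpha,A,\alpha)=p$; hence $p\leq q$.

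The only genuine bookkeeping obstacle is keeping track of the direction of the prefix relations and the overlap between the branches: one must verify that the first branch collapses to the case $\alpha=\beta$ (so it is subsumed by taking $\alpha'=\eword$ in the statement) and that when both branches apply simultaneously, which happens exactly when $\alpha=\beta$, they return the same value $(\alpha,A\cap B,\alpha)$, so no ambiguity arises. Once the cases are aligned in this way, both implications follow immediately by comparing the three coordinates of $pq$ with those of $p$.
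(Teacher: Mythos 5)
Your proof is correct and follows the standard (and essentially only natural) route: characterize $p\leq q$ by $pq=p$, compute $pq$ from the two branches of the multiplication rule, and match coordinates, with the branch overlap at $\alpha=\beta$ handled properly. The paper itself only cites this result from \cite{MR3648984} without reproducing a proof, and your computation is exactly the argument one would expect there.
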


\subsection{Filters in $E(S)$}\label{subsection:filters.E(S)}

For $E$ a (meet) semilattice with $0$, there is a bijection between the set of filters in $E$ (upper sets that are closed under meets and that do not contain $0$) and the set $\hat{E}_0$ of characters of $E$ (zero and meet-preserving nonzero maps from $E$ to the Boolean algebra $\{0,1\}$). Under the topology of pointwise convergence in $\hat{E}_0$, the closure of the subset $\hat{E}_\infty$ of characters that correspond to ultrafilters in $E$ is denoted by $\hat{E}_{tight}$, and is called the \emph{tight spectrum} of $E$; its elements are the \emph{tight characters of $E$}, and their corresponding filters are \emph{tight filters}. See \cite[Section 12]{MR2419901} for details.

For a given labelled space $\lspace$ that is weakly left-resolving and normal, we now review a description of the tight spectrum of $E(S)$ given in \cite{MR3648984}: let us begin by dropping the normality and considering a labelled space that is just weakly left-resolving.

Let $\alpha\in\awleinf$ and $\{\ftg{F}_n\}_{0\leq n\leq|\alpha|}$ (understanding that ${0\leq n\leq|\alpha|}$ means $0\leq n<\infty$ when $\alpha\in\awinf$) be a family such that $\ftg{F}_n$ is a filter in $\acfrg{\alpha_{1,n}}$ for every $n>0$ and $\ftg{F}_0$ is either a filter in $\acf$ or $\ftg{F}_0=\emptyset$. The family $\{\ftg{F}_n\}_{0\leq n\leq|\alpha|}$ is said to be \emph{complete for} $\alpha$ if
\[\ftg{F}_n = \{A\in \acfrg{\alpha_{1,n}} \ | \ r(A,\alpha_{n+1})\in\ftg{F}_{n+1}\}\]
for all $n\geq0$.

It is worth pointing out that in the case of a labelled space that is weakly left-resolving and normal, if a filter in a complete family is an ultrafilter, then all filters in the family coming before it are also ultrafilters \cite[Proposition 5.7]{MR3648984}.

\begin{theorem}[\cite{MR3648984}, Theorem 4.13]\label{thm.filters.in.E(S)}
	Let $\lspace$ be a weakly left-resolving labelled space and $S$ be its associated inverse semigroup. Then there is a bijective correspondence between filters in $E(S)$ and pairs $(\alpha, \{\ftg{F}_n\}_{0\leq n\leq|\alpha|})$, where $\alpha\in\awleinf$ and $\{\ftg{F}_n\}_{0\leq n\leq|\alpha|}$ is a complete family for $\alpha$.
\end{theorem}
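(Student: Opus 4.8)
The plan is to exhibit explicit maps in both directions between filters in $E(S)$ and pairs $(\alpha,\{\ftg{F}_n\}_{0\le n\le|\alpha|})$ and to check that they are mutually inverse. Starting from a filter $\xi$ in $E(S)$, I would first recover the labelled path. The key point is that any two nonzero elements $(\gamma,A,\gamma)$ and $(\delta,B,\delta)$ of $\xi$ have comparable words: since a filter is closed under meets, their product lies in $\xi$ and is therefore nonzero, and inspecting the multiplication formula in $S$ shows that a nonzero product of idempotents forces $\gamma$ and $\delta$ to be comparable. Thus $\{\gamma:(\gamma,A,\gamma)\in\xi\text{ for some }A\}$ is a chain. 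Moreover, if $(\gamma,A,\gamma)\in\xi$ with $|\gamma|=n$, then for each $1\le k\le n$ the order description above gives $(\gamma,A,\gamma)\le(\gamma_{1,k},r(\gamma_{1,k}),\gamma_{1,k})$, so all these idempotents lie in $\xi$; hence the positive lengths occurring form an initial segment of $\{1,2,\ldots\}$. I then let $\alpha\in\awleinf$ be the unique labelled path of length $\sup\{|\gamma|:(\gamma,A,\gamma)\in\xi\}$ having all these words as beginnings, and for $0\le n\le|\alpha|$ I set $\ftg{F}_n=\{A\in\acfrg{\alpha_{1,n}}:(\alpha_{1,n},A,\alpha_{1,n})\in\xi\}$.

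Next I would verify that this pair is as required. That each $\ftg{F}_n$ with $n\ge1$ is a filter in $\acfrg{\alpha_{1,n}}$ (and that $\ftg{F}_0$ is a filter in $\acf$ or is empty) follows from the order description together with the identity $(\alpha_{1,n},A,\alpha_{1,n})(\alpha_{1,n},A',\alpha_{1,n})=(\alpha_{1,n},A\cap A',\alpha_{1,n})$; nonemptiness for $n\ge1$ comes from the initial-segment property, and $0\notin\xi$ rules out $\emptyset$. The crux is the completeness relation. For $n+1\le|\alpha|$ and $A\in\ftg{F}_n$, choosing a witness $(\alpha_{1,n+1},B,\alpha_{1,n+1})\in\xi$ and multiplying gives $(\alpha_{1,n+1},r(A,\alpha_{n+1})\cap B,\alpha_{1,n+1})\in\xi$, whence $r(A,\alpha_{n+1})\in\ftg{F}_{n+1}$ by upper-closedness; conversely, if $r(A,\alpha_{n+1})\in\ftg{F}_{n+1}$ then $(\alpha_{1,n+1},r(A,\alpha_{n+1}),\alpha_{1,n+1})\le(\alpha_{1,n},A,\alpha_{1,n})$, so the latter lies in $\xi$ and $A\in\ftg{F}_n$. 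This yields $\ftg{F}_n=\{A\in\acfrg{\alpha_{1,n}}:r(A,\alpha_{n+1})\in\ftg{F}_{n+1}\}$, so the family is complete.

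For the reverse direction, given a complete family $(\alpha,\{\ftg{F}_n\})$ I would define $\xi=\{(\gamma,A,\gamma)\in E(S):\gamma=\alpha_{1,n}\text{ for some }n\le|\alpha|\text{ and }A\in\ftg{F}_n\}$. Using the composition rule $r(r(A,\mu),\nu)=r(A,\mu\nu)$ for relative ranges, one-step completeness iterates to $\ftg{F}_m=\{C\in\acfrg{\alpha_{1,m}}:r(C,\alpha_{m+1,n})\in\ftg{F}_n\}$ for $m\le n$. With this, meet-closedness of $\xi$ reduces to meet-closedness of each $\ftg{F}_n$, and $\xi$ is nonempty since $\ftg{F}_n$ is a genuine filter for $n\ge1$ and $\ftg{F}_0$ is a filter when $|\alpha|=0$. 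I expect the upper-set property to be the main obstacle, the delicate point being level $0$, where $\acf=\acfrg{\eword}$ need not have a greatest element and $\ftg{F}_0$ may be empty. If $(\gamma,A,\gamma)\in\xi$ with $\gamma=\alpha_{1,n}$ and $(\gamma,A,\gamma)\le(\delta,B,\delta)$ with $\delta=\alpha_{1,m}$, then $A\subseteq r(B,\alpha_{m+1,n})$ and $A\in\ftg{F}_n$ force $r(B,\alpha_{m+1,n})\in\ftg{F}_n$ by upper-closedness, hence $B\in\ftg{F}_m$ by the iterated completeness; in particular, when $\ftg{F}_0=\emptyset$ this same computation shows that no element of $\xi$ admits an upper bound with empty word, so no ``leakage'' to level $0$ occurs and $\xi$ is genuinely an upper set.

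Finally, that the two constructions are mutually inverse is bookkeeping. Applying the reverse map after the forward map returns exactly the elements of $\xi$ whose word is a beginning of $\alpha$, which is all of $\xi$ by the construction of $\alpha$. Applying the forward map after the reverse map recovers the same $\alpha$, since the lengths $1,\ldots,|\alpha|$ are occupied because each $\ftg{F}_n$ with $n\ge1$ is nonempty, and recovers the same filters $\ftg{F}_n$ directly from the definition of $\xi$. This establishes the desired bijective correspondence.
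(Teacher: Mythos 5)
This theorem is not proved in the paper at all --- it is imported verbatim from \cite{MR3648984}, where the proof (via Propositions 4.4 and 4.8 of that reference, echoed here in Remark \ref{remark.when.in.xialpha}) runs exactly as yours does: extract the word from the chain of words occurring in the filter, define each $\ftg{F}_n$ by restriction, and check the two constructions are mutually inverse. Your verifications are correct, including the two points that actually require care (comparability of words via nonvanishing of products, and the level-$0$ edge case in the upper-set argument when $\ftg{F}_0=\emptyset$). The one step you assert without justification is that, for a filter of infinite type, the infinite word assembled from the increasing chain of finite beginnings lies in $\awinf$; with the definition $\awinf=\lbf(\dgraph^{\infty})$ used in this paper that is not automatic (an infinite word all of whose finite prefixes are labelled paths need not be the label of an infinite path), so you should either supply an argument or flag that this rests on the conventions of \cite{MR3648984}.
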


Filters are of \emph{finite type} if they are associated with pairs $(\alpha, \{\ftg{F}_n\}_{0\leq n\leq|\alpha|})$ for which $|\alpha|<\infty$, and of \emph{infinite type} otherwise.

A filter $\xi$ in $E(S)$ with associated labelled path $\alpha\in\awleinf$ is sometimes denoted by $\xia$ to stress the word $\alpha$; in addition, the filters in the complete family associated with $\xia$ will be denoted by $\xi^{\alpha}_n$ (or simply $\xi_n$). Specifically,
\begin{align}\label{eq.defines.xi_n}
	\xi^{\alpha}_n=\{A\in\acf \ | \ (\alpha_{1,n},A,\alpha_{1,n}) \in \xia\}.
\end{align}

\begin{remark}\label{remark.when.in.xialpha}
	It follows from \cite[Propositions 4.4 and 4.8]{MR3648984} that for a filter $\xi^\alpha$ in $E(S)$ and an element $(\beta,A,\beta)\in E(S)$ we have $(\beta,A,\beta)\in\xi^{\alpha}$ if and only if $\beta$ is a beginning of $\alpha$ and $A\in\xi_{|\beta|}$.
\end{remark}

\begin{theorem}[\cite{MR3648984}, Theorems 5.10 and 6.7]
	\label{thm.tight.filters.in.es}
	Let $\lspace$ be a weakly left-resolving, normal labelled space and $S$ be its associated inverse semigroup. Then the tight filters in $E(S)$ are:
	\begin{enumerate}[(i)]
		\item The filters of infinite type for which the non-empty elements of their associated complete families are ultrafilters. 
		\item The filters of finite type $\xia$ such that $\xi_{|\alpha|}$ is an ultrafilter in $\acfra$ and for each  $A\in\xi_{|\alpha|}$ at least one of the following conditions hold:
		\begin{enumerate}[(a)]
			\item $\lbf(A\dgraph^1)$ is infinite.
			\item There exists $B\in\acfra$ such that $\emptyset\neq B\subseteq A\cap \dgraph^0_{sink}$.
		\end{enumerate}
	\end{enumerate}
\end{theorem}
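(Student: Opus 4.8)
The plan is to characterize tightness through Exel's finite-cover criterion and then translate it using the parametrization of filters. Recall that for a semilattice with zero, a filter $\xi$ is tight precisely when, for every $e\in\xi$ and every finite \emph{cover} $Z$ of $e$ (a finite set of elements below $e$ such that every nonzero element below $e$ has nonzero meet with some member of $Z$), one has $Z\cap\xi\neq\emptyset$ (see \cite[Section~12]{MR2419901}). I would combine this with Theorem~\ref{thm.filters.in.E(S)}, which identifies $\xi$ with a pair $(\alpha,\{\xi_n\})$, with the description of the order on $E(S)$ (an idempotent $(\beta\gamma',C,\beta\gamma')$ lies below $(\beta,A,\beta)$ if and only if $C\subseteq r(A,\gamma')$), and with Remark~\ref{remark.when.in.xialpha} (so that $(\beta,A,\beta)\in\xi^{\alpha}$ if and only if $\beta$ is a beginning of $\alpha$ and $A\in\xi_{|\beta|}$). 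The idempotents below $(\beta,A,\beta)$ are then exactly the $(\beta\mu,C,\beta\mu)$ with $\emptyset\neq C\subseteq r(A,\mu)$, and a routine computation with the product of $S$ shows that $(\beta\mu,C,\beta\mu)$ and $(\beta\nu,D,\beta\nu)$ have nonzero meet if and only if $\mu,\nu$ are comparable and the corresponding ranges of $C,D$ intersect along the longer word.

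For necessity, suppose $\xi=\xi^{\alpha}$ is tight. To force each nonempty $\xi_n$ to be an ultrafilter in $\mathscr{B}_{\alpha_{1,n}}$, fix $A\in\xi_n$ and an arbitrary $B\in\mathscr{B}_{\alpha_{1,n}}$; by normality $A\cap B$ and $A\setminus B$ lie in $\mathscr{B}_{\alpha_{1,n}}$, and $\{(\alpha_{1,n},A\cap B,\alpha_{1,n}),(\alpha_{1,n},A\setminus B,\alpha_{1,n})\}$ (discarding an empty entry) is a finite cover of $(\alpha_{1,n},A,\alpha_{1,n})\in\xi$, so tightness gives $A\cap B\in\xi_n$ or $A\setminus B\in\xi_n$; this is exactly primeness, hence $\xi_n$ is an ultrafilter. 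This settles (i) and the ultrafilter clause of (ii). For the remaining clause of (ii), assume $\xi$ is of finite type and take $A\in\xi_{|\alpha|}$; if both (a) and (b) failed, then $\lbf(A\dgraph^1)$ would be finite and $A$ would contain no nonempty sink-subset, and one checks that $\{(\alpha a,r(A,a),\alpha a) : a\in\lbf(A\dgraph^1)\}$ is then a finite cover of $(\alpha,A,\alpha)$; since a finite-type filter contains no idempotent of length exceeding $|\alpha|$, this cover misses $\xi$, contradicting tightness. Thus (a) or (b) must hold, and filters outside classes (i) and (ii) are not tight.

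For sufficiency I would show that every filter in (i) or (ii) meets each of its finite covers, by induction on $M(Z)=\max_j|\mu_j|$, where $e=(\beta,A,\beta)\in\xi$ and $Z=\{(\beta\mu_j,C_j,\beta\mu_j)\}$. Note first that $\xi_{|\beta|}$ is an ultrafilter in every case: in case (i) because $A\in\xi_{|\beta|}$ is nonempty, and in case (ii) because $\xi_{|\alpha|}$ is an ultrafilter and \cite[Proposition~5.7]{MR3648984} propagates this to all earlier levels. If $M(Z)=0$ then all $\mu_j$ are empty, the covering condition forces $A=\bigcup_j C_j$ inside $\mathscr{B}_{\beta}$, and primeness of the ultrafilter $\xi_{|\beta|}$ yields some $C_j\in\xi_{|\beta|}$, i.e.\ $(\beta,C_j,\beta)\in\xi$. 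If $M(Z)\geq1$, assume for contradiction that no member of $Z$ lies in $\xi$; splitting off the entries with $\mu_j=\emptyset$ and using primeness, I replace $A$ by $A'=A\setminus\bigcup_{\mu_j=\emptyset}C_j\in\xi_{|\beta|}$ and intersect each surviving $C_j$ with $r(A',\mu_j)$, after which every surviving entry lies below $(\beta,A',\beta)\in\xi$, has $\mu_j\neq\emptyset$, and $Z$ still covers $(\beta,A',\beta)$. If $\beta=\alpha$ (finite type, terminal level), conditions (a)/(b) for $A'$ make a cover by proper extensions impossible (a sink-subset from (b) meets no such entry, and when (a) holds one finds a letter of $\lbf(A'\dgraph^1)$ occurring in no $\mu_j$), so this case cannot occur; otherwise the letter $a=\alpha_{|\beta|+1}$ exists, $e'=(\beta a,r(A',a),\beta a)\in\xi$ by completeness, and the entries of $Z$ whose word begins with $a$, rewritten over $\beta a$, form a cover of $e'$ with $M$ strictly smaller. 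The induction (which terminates even in the infinite-type case, since $M$ decreases while the available levels never run out) produces a member of $Z$ in $\xi$, contradicting the assumption.

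I expect the sufficiency step to be the main obstacle: a general finite cover mixes entries of many different lengths and along letters straying from $\alpha$, so the argument must simultaneously peel off the entries sitting at the current vertex via primeness of the level ultrafilter and descend one letter along $\alpha$ while tracking that the surviving entries still cover. Making this descent well-founded, so that it also terminates for infinite-type filters, is exactly the role of the measure $M(Z)$; and verifying that conditions (a)/(b) obstruct every finite cover by proper extensions at a terminal vertex is the delicate point that separates the finite-type class (ii) from the infinite-type class (i).
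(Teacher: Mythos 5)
This theorem is imported from \cite{MR3648984} (Theorems 5.10 and 6.7); the present paper states it without proof, so there is no internal argument to compare yours against. Taken on its own terms, your proposal is correct and follows the natural route: translate Exel's cover condition for tight characters through the parametrization of filters by pairs $(\alpha,\{\xi_n\})$ and the order on $E(S)$. The necessity direction is complete as written: the two-element cover $\{(\alpha_{1,n},A\cap B,\alpha_{1,n}),(\alpha_{1,n},A\setminus B,\alpha_{1,n})\}$ forces primeness, hence maximality, of each nonempty level filter, and the cover $\{(\alpha a,r(A,a),\alpha a)\ :\ a\in\lbf(A\dgraph^1)\}$ rules out finite-type filters for which some $A\in\xi_{|\alpha|}$ fails both (a) and (b). For sufficiency, the induction on $M(Z)$ is sound, but two points deserve explicit verification. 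First, that after discarding the length-zero entries and replacing $A$ by $A'=A\setminus\bigcup_{\mu_j=\eword}C_j$ the surviving entries still cover $(\beta,A',\beta)$: this is exactly where the weakly left-resolving hypothesis enters, since for a nonzero $(\beta\nu,D,\beta\nu)\leq(\beta,A',\beta)$ one needs $D\cap r(C_j,\nu)\subseteq r(A',\nu)\cap r(C_j,\nu)=r(A'\cap C_j,\nu)=\emptyset$ to see that such an element cannot meet a discarded entry and must therefore meet a surviving one (and one must also check that intersecting each $C_j$ with $r(A',\mu_j)$ preserves the nonzero meets). Second, the equivalence of tightness with the single-element cover condition for characters should be quoted precisely from \cite{MR2419901}, as Exel's definition is phrased in terms of covers of the sets $E^{X,Y}$ and the reduction to covers of principal ideals is a (true but nontrivial) lemma. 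The cited source organizes the material differently, first characterizing the ultrafilters in $E(S)$ and then the tight filters, whereas your direct descent along $\alpha$ --- peeling off the current level by primeness and passing to $(\beta\alpha_{|\beta|+1},r(A',\alpha_{|\beta|+1}),\beta\alpha_{|\beta|+1})$ --- is a correct, self-contained alternative whose well-foundedness via $M(Z)$ handles the finite- and infinite-type cases uniformly.
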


The set $\ftight$ of tight filters in $E(S)$ is endowed with the topology induced from the topology of pointwise convergence of characters of $E(S)$, and is thus treated as being (homeomorphic to) the tight spectrum of $E(S)$. For $\alpha\in\awstar$, we denote by $\ftightw{\alpha}$ the set of all tight filters in $E(S)$ for which the associated word is $\alpha$. Also, in what follows $\filt$ denotes the set of all filters in $E(S)$.

\subsection{Filter surgery in E(S)}\label{subsection.filter.surgery}

From now on, $\lspace$ stands for a weakly left-resolving \emph{normal} labelled space unless stated otherwise, and all sets $X_\alpha$, as presented at the end of Subsection \ref{subsect.labelled.spaces}, are considered as topological spaces, with the topology given by convergence of filters (that is, the pointwise convergence of the corresponding characters). For all omitted details and proofs in this subsection, see \cite[Section 4]{MR3680957}.

Given $\alpha,\beta\in\awplus$ such that $\alpha\beta\in\awplus$, the relative range map $\newf{r(\,\cdot\,,\beta)}{\acfrg{\alpha}}{\acfrg{\alpha\beta}}$ is a morphism of Boolean algebras and, therefore, we have its dual morphism \[\newf{f_{\alpha[\beta]}}{X_{\alpha\beta}}{X_{\alpha}}\] given by $f_{\alpha[\beta]}(\ft)=\{A\in\acfra \ | \ r(A,\beta)\in\ft\}$ (think of this as the map on ultrafilters induced by cutting $\beta$ from the end of the labelled path $\alpha\beta$).

When $\alpha=\eword$,  if $\ft\in\acfrg{\beta}$ then $\{A\in\acf \ | \ r(A,\beta)\in\ft\}$ is either an ultrafilter in $\acf=\acfrg{\eword}$ or is the empty set, and we can therefore consider $\newf{f_{\eword[\beta]}}{X_{\beta}}{X_{\eword}\cup\{\emptyset\}}$. These functions $f_{\alpha[\beta]}$ are continuous, and $f_{\alpha[\beta\gamma]}=f_{\alpha[\beta]}\circ f_{\alpha\beta[\gamma]}$.

We now review functions described in \cite{MR3680957} that generalize two operations that can easily be done with paths on a graph $\dgraph$: gluing paths, that is, given $\mu$ and $\nu$ paths on $\dgraph$ such that $r(\mu)=s(\nu)$, it is easy to see that $\mu\nu$ is a new path on $\dgraph$; and cutting paths, that is, given a path $\mu\nu$ on $\dgraph$ then $\nu$ is also a path on the graph.

In the context of labelled spaces, we have an extra layer of complexity because filters in $E(S)$ are described not only by a labelled path but also by a complete family of filters associated with it, by Theorem \ref{thm.filters.in.E(S)}. When we cut or glue labelled paths, the Boolean algebras where the filters lie change because they depend on the labelled path. We also note that, since we are only interested in tight filters in $E(S)$, it is enough to consider families consisting only of ultrafilters, by Theorem \ref{thm.tight.filters.in.es}.

Let us begin with the gluing: for composable labelled paths $\alpha\in\awplus$ and $\beta\in\awstar$ (that is, such that $\alpha\beta\in\awplus$), consider the subspace $X_{(\alpha)\beta}$ of $X_\beta$ given by \[X_{(\alpha)\beta}=\{\ft\in X_\beta \ |\ r(\alpha\beta)\in\ft\}.\] There is then a continuous map \[\newf{g_{(\alpha)\beta}}{X_{(\alpha)\beta}}{X_{\alpha\beta}}\] on ultrafilters induced by gluing $\alpha$ at the beginning of the labelled path $\beta$ given by
\begin{align}\label{eq:def.g}
	g_{(\alpha)\beta}(\ft)=\{C\cap r(\alpha\beta)\ |\ C\in\ft\}.
\end{align}

The following simple result will be needed later on.

\begin{lemma}\label{lemma:A.belongs.g(F)}
	Suppose that $A\in\acfrg{\gamma}$ and $\ft\in X_{(a)\gamma}$. Then, $A\in\ft$ if and only if $A\cap r(a\gamma)\in g_{(a)\gamma}(\ft)$.
\end{lemma}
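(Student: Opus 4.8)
The plan is to prove both directions of the biconditional by working directly with the definition of $g_{(a)\gamma}$ given in \eqref{eq:def.g}. The statement relates membership of $A$ in the ultrafilter $\ft \in X_{(a)\gamma}$ to membership of $A \cap r(a\gamma)$ in the glued ultrafilter $g_{(a)\gamma}(\ft)$, so the key observation to keep in mind throughout is that $\ft$ lies in the subspace $X_{(a)\gamma}$, which by definition means $r(a\gamma)\in\ft$. This membership is what will let me move freely between $A$ and $A\cap r(a\gamma)$ inside $\ft$.

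First I would prove the forward direction. Assuming $A\in\ft$, since also $r(a\gamma)\in\ft$ and $\ft$ is a filter (hence closed under finite intersections), we get $A\cap r(a\gamma)\in\ft$. Then, reading off the definition $g_{(a)\gamma}(\ft)=\{C\cap r(a\gamma)\ |\ C\in\ft\}$ with $C = A\cap r(a\gamma)$ and using that $(A\cap r(a\gamma))\cap r(a\gamma) = A\cap r(a\gamma)$, I conclude that $A\cap r(a\gamma)\in g_{(a)\gamma}(\ft)$. For the converse, assume $A\cap r(a\gamma)\in g_{(a)\gamma}(\ft)$. By the definition of $g_{(a)\gamma}(\ft)$, there exists $C\in\ft$ with $A\cap r(a\gamma) = C\cap r(a\gamma)$. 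Intersecting $C$ with $r(a\gamma)\in\ft$ shows $C\cap r(a\gamma)\in\ft$, so $A\cap r(a\gamma)\in\ft$, and since $A\supseteq A\cap r(a\gamma)$ and $\ft$ is an upper set, $A\in\ft$.

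The main subtlety, and the one point I would check carefully, is that all the sets appearing actually live in the Boolean algebra $\acfrg{\gamma}$ so that the filter operations are legitimate. Here $A\in\acfrg{\gamma}$ by hypothesis and $r(a\gamma)\in\acfrg{\gamma}$ (it is the range of a labelled path, hence in $\acf$, and it is contained in $r(\gamma)$ when $r(a\gamma)$ is viewed appropriately); since $\acfrg{\gamma}$ is closed under finite intersections, $A\cap r(a\gamma)\in\acfrg{\gamma}$ as well. I would also note that $g_{(a)\gamma}(\ft)$ is genuinely an ultrafilter in $\acfrg{a\gamma}$, as already established in the construction preceding the lemma, so asserting membership in it is meaningful. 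With these points settled, the argument is a short chain of filter manipulations and I do not anticipate a genuine obstacle; the whole content is bookkeeping around the fact that $r(a\gamma)\in\ft$.
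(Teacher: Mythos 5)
Your proof is correct and follows essentially the same route as the paper's: the forward direction is immediate from the definition of $g_{(a)\gamma}$, and the converse uses that $g_{(a)\gamma}(\ft)\subseteq\ft$ (via $r(a\gamma)\in\ft$) together with upward closure of the filter $\ft$ in $\acfrg{\gamma}$. The only cosmetic difference is that in the forward direction you take $C=A\cap r(a\gamma)$ where the paper simply takes $C=A$, which makes your version marginally longer but changes nothing of substance.
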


\begin{proof}
	If $A\in\ft$ then $A\cap r(a\gamma)\in g_{(a)\gamma}(\ft)$ from the definition of $g_{(a)\gamma}(\ft)$. For the converse, suppose that $A\cap r(a\gamma)\in g_{(a)\gamma}(\ft)$. Since $g_{(a)\gamma}(\ft)\subseteq \ft$, $A\in\acfrg{\gamma}$ and $\ft$ is a filter in $\acfrg{\gamma}$, it follows that $A\in \ft$.
\end{proof}

Now for composable labelled paths $\alpha\in\awplus$ and $\beta\in\awleinf$, let $\ftightw{(\alpha)\beta}$ be the subspace of $\ftightw{\beta}$ given by \[ \ftightw{(\alpha)\beta}=\{\xi\in\ftightw{\beta}\ |\ \xi_0\in X_{(\alpha)\eword}\}.\]
We can then define a gluing map \[\newf{G_{(\alpha)\beta}}{\ftightw{(\alpha)\beta}}{\ftightw{\alpha\beta}}\] taking a tight filter $\xi\in\ftightw{(\alpha)\beta}$ to the tight filter $\eta\in\ftightw{\alpha\beta}$, whose complete family of (ultra)filters is obtained by gluing and cutting labelled paths appropriately, as follows:
\begin{itemize}
	\item If $\beta=\eword$, \[ \eta_{|\alpha|}=g_{(\alpha)\eword}(\xi_0)=\{C\cap r(\alpha)\ |\ C\in\xi_0\} \] and, for for $0\leq i< |\alpha|$, \[ \eta_i=f_{\alpha_{1,i}[\alpha_{i+1,|\alpha|}]}(\eta_{|\alpha|})=\{D\in\acfrg{\alpha_{1,i}}\ |\ r(D,\alpha_{i+1,|\alpha|})\in\eta_{|\alpha|}\};\]
	
	\item If $\beta\neq\eword$, for $1\leq n\leq|\beta|$ (or $n<|\beta|$ if $\beta$ is infinite)
	\[ \eta_{|\alpha|+n} = g_{(\alpha)\beta_{1,n}}(\xi_n) = \{C\cap r(\alpha\beta_{1,n})\ |\ C\in\xi_n\}\] and, for $0\leq i\leq|\alpha|$,
	\[\eta_i = f_{\alpha_{1,i}[\alpha_{i+1,|\alpha|}\beta_1]}(\eta_{|\alpha|+1})
	=\{D\in\acfrg{\alpha_{1,i}}\ |\ r(D,\alpha_{i+1,|\alpha|}\beta_1)\in\eta_{|\alpha|+1}\}.\]
\end{itemize}
Finally, for $\alpha=\eword$ set $\ftightw{(\eword)\beta}=\ftightw{\beta}$ and let $G_{(\eword)\beta}$ be the identity function on $\ftightw{\beta}$.

Next, we describe the cutting: for composable labelled paths $\alpha\in\awplus$ and $\beta\in\awstar$, there is a continuous map \[\newf{h_{[\alpha]\beta}}{X_{\alpha\beta}}{X_{(\alpha)\beta}}\] induced on ultrafilters by cutting $\alpha$ from the beginning of $\alpha\beta$ given by
\begin{align}\label{eq:def.h}
	h_{[\alpha]\beta}(\ft)=\usetr{\ft}{\acfrg{\beta}}=\{C\in\acfrg{\beta}\ |\ D\leq C\text{ for some }D\in\ft\}.
\end{align}

\begin{lemma}\label{lemma:A.belongs.h(F)}
	Suppose that $A\in \acfrg{a\gamma}$ and $\ft\in X_{a\gamma}$. Then, $A\in\ft$ if and only if $A\in h_{[a]\gamma}(\ft)$.
\end{lemma}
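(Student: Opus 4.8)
The plan is to mirror the proof of Lemma \ref{lemma:A.belongs.g(F)}, exploiting that $h_{[a]\gamma}(\ft)$ is, by definition \eqref{eq:def.h}, nothing but the upward closure of the (ultra)filter $\ft$ computed inside the larger Boolean algebra $\acfrg{\gamma}$. The preliminary observation on which everything rests is that $r(a\gamma)\subseteq r(\gamma)$: every vertex reached by reading the labelled path $a\gamma$ from all of $\dgraph^0$ is in particular reached by reading $\gamma$. Consequently $\acfrg{a\gamma}=\acf\cap\powerset{r(a\gamma)}\subseteq\acf\cap\powerset{r(\gamma)}=\acfrg{\gamma}$, so the hypothesis $A\in\acfrg{a\gamma}$ already guarantees $A\in\acfrg{\gamma}$. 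This is what makes the assertion ``$A\in h_{[a]\gamma}(\ft)$'' meaningful, since membership in $\usetr{\ft}{\acfrg{\gamma}}$ is only defined for elements of $\acfrg{\gamma}$.

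For the forward implication I would argue directly. If $A\in\ft$, then taking $D=A$ (which lies in $\ft$ and trivially satisfies $A\leq A$), together with the fact that $A\in\acfrg{\gamma}$ noted above, shows at once that $A$ belongs to $\usetr{\ft}{\acfrg{\gamma}}=h_{[a]\gamma}(\ft)$ by the defining formula \eqref{eq:def.h}.

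For the converse, suppose $A\in h_{[a]\gamma}(\ft)$. By \eqref{eq:def.h} there is some $D\in\ft$ with $D\leq A$. Now $\ft$ is an ultrafilter, hence a filter, in $\acfrg{a\gamma}$, and is therefore upward closed within $\acfrg{a\gamma}$; since $A\in\acfrg{a\gamma}$ and $D\leq A$ with $D\in\ft$, it follows that $A\in\ft$, as desired.

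I do not anticipate any genuine obstacle: the content of the lemma is essentially the remark that forming the upward closure of a filter in a larger Boolean algebra and then restricting attention back to the smaller algebra recovers the filter unchanged on its own algebra. The only point requiring care is bookkeeping of which Boolean algebra each set inhabits, and in particular invoking the inclusion $\acfrg{a\gamma}\subseteq\acfrg{\gamma}$ so that $A$ is simultaneously a legitimate element of both algebras, letting the filter property of $\ft$ on $\acfrg{a\gamma}$ be applied to $A$ directly.
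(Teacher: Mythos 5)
Your proof is correct, and the converse direction takes a genuinely different route from the paper's. The paper proves ``$A\in h_{[a]\gamma}(\ft)\Rightarrow A\in\ft$'' by invoking the identity $\ft=g_{(a)\gamma}(h_{[a]\gamma}(\ft))=\{B\cap r(a\gamma)\mid B\in h_{[a]\gamma}(\ft)\}$ and observing that $A\subseteq r(a\gamma)$ forces $A=A\cap r(a\gamma)\in\ft$; this leans on the nontrivial fact that the gluing map inverts the cutting map. You instead unwind the definition \eqref{eq:def.h}: the witness $D\in\ft$ with $D\leq A$ already lives in $\acfrg{a\gamma}$, and since $A\in\acfrg{a\gamma}$ by hypothesis, the upward closedness of the filter $\ft$ \emph{within} $\acfrg{a\gamma}$ gives $A\in\ft$ directly. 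Your argument is more elementary (it uses only the definition of $h_{[a]\gamma}$ and the filter axioms, not the $g$--$h$ duality), and your explicit verification that $r(a\gamma)\subseteq r(\gamma)$, hence $\acfrg{a\gamma}\subseteq\acfrg{\gamma}$, makes the forward direction cleaner than the paper's ``from the definition,'' since membership in $\usetr{\ft}{\acfrg{\gamma}}$ does require $A\in\acfrg{\gamma}$. What the paper's route buys is uniformity with the companion Lemma \ref{lemma:A.belongs.g(F)} and with the surrounding cut-and-glue machinery; what yours buys is independence from Theorem \ref{thm.H-class.G-class.inverses} and its ingredients.
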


\begin{proof}
	If $A\in\ft$ then $A\in h_{[a]\gamma}(\ft)$ from the definition of $h_{[a]\gamma}(\ft)$. Now, if $A\in h_{[a]\gamma}(\ft)$, using that $\ft=g_{(a)\gamma}(h_{[a]\gamma}(\ft))=\{B\cap r(a\gamma)\,|\, B\in h_{[a]\gamma}(\ft)\}$, since $A\subseteq r(a\gamma)$, we conclude that $A\in\ft$.
\end{proof}

For composable labelled paths $\alpha\in\awplus$ and $\beta\in\awleinf$, these give rise to a cutting map \[\newf{H_{[\alpha]\beta}}{\ftightw{\alpha\beta}}{\ftightw{(\alpha)\beta}}\] that takes a tight filter $\xi\in\ftightw{\alpha\beta}$ to the tight filter $\eta\in\ftightw{(\alpha)\beta}$ such that, for all $n$ with $0\leq n\leq |\beta|$, \[ \eta_n=h_{[\alpha]\beta_{1,n}}(\xi_{n+|\alpha|}).\] For $\alpha=\eword$ define $H_{[\eword]\beta}$ to be the identity function over $\ftightw{\beta}$.


\begin{theorem}[\cite{MR3680957}, Theorem 4.17]
	\label{thm.H-class.G-class.inverses}
	Suppose the labelled space $\lspace$ is weakly left-resolving and normal, and let $\alpha\in\awplus$ and $\beta\in\awleinf$ be such that $\alpha\beta\in\awleinf$. Then $H_{[\alpha]\beta}=(G_{(\alpha)\beta})^{-1}$.
\end{theorem}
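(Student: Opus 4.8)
The plan is to prove directly that $H_{[\alpha]\beta}\circ G_{(\alpha)\beta}$ is the identity on $\ftightw{(\alpha)\beta}$ and that $G_{(\alpha)\beta}\circ H_{[\alpha]\beta}$ is the identity on $\ftightw{\alpha\beta}$; together these two equalities show that $G_{(\alpha)\beta}$ is a bijection whose inverse is $H_{[\alpha]\beta}$. Everything reduces to one fact about the one-step maps: for composable $a$ and $\gamma$, the maps $g_{(a)\gamma}$ and $h_{[a]\gamma}$ are mutually inverse between $X_{(a)\gamma}$ and $X_{a\gamma}$. The identity $g_{(a)\gamma}\circ h_{[a]\gamma}=\mathrm{id}$ is exactly the one already invoked inside the proof of Lemma~\ref{lemma:A.belongs.h(F)}, while the reverse identity $h_{[a]\gamma}\circ g_{(a)\gamma}=\mathrm{id}$ follows from Lemma~\ref{lemma:A.belongs.g(F)}: for an arbitrary $A\in\acfrg{\gamma}$ one compares membership of $A$ in $\ft$ and in $h_{[a]\gamma}(g_{(a)\gamma}(\ft))$, using that both ultrafilters contain $r(a\gamma)$ and that applying $g_{(a)\gamma}$ to each yields the same ultrafilter.

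By Theorem~\ref{thm.filters.in.E(S)} a tight filter is completely determined by its labelled path together with its complete family of ultrafilters, so it suffices to check that each composition preserves the word and agrees with its input level by level. The words cause no trouble, since $G$ prepends $\alpha$ and $H$ deletes it, so both compositions return a filter with the original word. For the families I would first match the levels above $\alpha$. Taking $\xi\in\ftightw{(\alpha)\beta}$ with $\beta\neq\eword$ and writing $\eta=G_{(\alpha)\beta}(\xi)$, the definition gives $\eta_{|\alpha|+n}=g_{(\alpha)\beta_{1,n}}(\xi_n)$, so the $n$-th level of $H_{[\alpha]\beta}(\eta)$ is $h_{[\alpha]\beta_{1,n}}(g_{(\alpha)\beta_{1,n}}(\xi_n))=\xi_n$ for $1\leq n\leq|\beta|$. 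Symmetrically, for $\zeta\in\ftightw{\alpha\beta}$ the level $|\alpha|+n$ of $G_{(\alpha)\beta}(H_{[\alpha]\beta}(\zeta))$ equals $g_{(\alpha)\beta_{1,n}}(h_{[\alpha]\beta_{1,n}}(\zeta_{|\alpha|+n}))=\zeta_{|\alpha|+n}$. Both reductions are precisely the one-step identities from the previous paragraph. In each case the matched levels comprise all those above a threshold—every positive level for $H_{[\alpha]\beta}\circ G_{(\alpha)\beta}$, and every level above $|\alpha|$ for $G_{(\alpha)\beta}\circ H_{[\alpha]\beta}$—and since the completeness relation determines each level of a complete family from the one directly above it, the equality propagates downward to the remaining lower levels (level $0$ in the first case, levels $0,\ldots,|\alpha|$ in the second). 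The degenerate case $\beta=\eword$ is handled separately and more directly: there $G$ gives $\eta_{|\alpha|}=g_{(\alpha)\eword}(\xi_0)$ and $H$ gives $\eta_0=h_{[\alpha]\eword}(\xi_{|\alpha|})$, so both compositions collapse to the one-step identities applied at levels $0$ and $|\alpha|$.

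The conceptual content lies entirely in the mutual invertibility of the one-step maps $g$ and $h$, so I expect the main obstacle to be organizational rather than substantive: one must keep the index shift between level $n$ of a $\beta$-indexed family and level $|\alpha|+n$ of an $\alpha\beta$-indexed family perfectly consistent, invoke the correct one of the two one-step identities in each of the two compositions, and make sure the completeness argument genuinely reaches all the unlisted lower levels as well as the $\beta=\eword$ boundary case.
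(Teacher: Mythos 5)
This statement is imported from \cite{MR3680957} (Theorem 4.17) and the present paper gives no proof of it, so there is nothing internal to compare against; judged on its own, your argument is correct and is the natural direct verification. Reducing everything to the mutual invertibility of the one-step maps $g_{(a)\gamma}$ and $h_{[a]\gamma}$ (the first composite being exactly the identity quoted inside the proof of Lemma \ref{lemma:A.belongs.h(F)}, the second following from upward closure of filters and the fact that members of $X_{(a)\gamma}$ contain $r(a\gamma)$), matching the levels above the threshold via the index shift $n\mapsto n+|\alpha|$, and letting the completeness relation push the agreement down to the remaining levels is sound; the only point worth making explicit is that the intermediate ultrafilters $\xi_n$ do lie in $X_{(\alpha)\beta_{1,n}}$, which follows from $r(\alpha)\in\xi_0$ and $r(\alpha\beta_{1,n})=r(r(\alpha),\beta_{1,n})$ together with completeness, so that the one-step identities are applied on their correct domains.
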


\begin{theorem}[\cite{MR3680957}, Lemmas 4.13 and 4.16]
	\label{thm.compositions.G.and.H}
	Suppose the labelled space $\lspace$ is weakly left-resolving and normal, and let $\alpha,\beta\in\awplus$ and $\gamma\in\awleinf$ be such that $\alpha\beta\gamma\in\awleinf$. Then $G_{(\alpha\beta)\gamma}=G_{(\alpha)\beta\gamma}\circ G_{(\beta)\gamma}$ and $H_{[\beta]\gamma}\circ H_{[\alpha]\beta\gamma}=H_{[\alpha\beta]\gamma}$.
\end{theorem}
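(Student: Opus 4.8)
The plan is to establish the gluing identity $G_{(\alpha\beta)\gamma}=G_{(\alpha)\beta\gamma}\circ G_{(\beta)\gamma}$ directly and then obtain the cutting identity for free by inverting it. The guiding principle is that a tight filter is determined by its word together with its complete family of ultrafilters (Theorem~\ref{thm.filters.in.E(S)}), and that completeness expresses each $\ftg{F}_n$ in terms of $\ftg{F}_{n+1}$; consequently two complete families for the same word that coincide at all levels above some index also coincide at every index below it. So I will reduce everything to checking the top levels and letting completeness propagate the agreement downward.

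Fix $\xi\in\ftightw{(\alpha\beta)\gamma}$ and set $\eta=G_{(\alpha\beta)\gamma}(\xi)$ and $\eta'=G_{(\alpha)\beta\gamma}(G_{(\beta)\gamma}(\xi))$, both tight filters with word $\alpha\beta\gamma$. It suffices to show $\eta_k=\eta'_k$ for all $k$, and by the remark above it is enough to do so for $k>|\alpha\beta|$; the levels $k\le|\alpha\beta|$ then follow by downward propagation through completeness (and if $\gamma=\eword$ there is no such $k$, so I argue the top level $|\alpha\beta|$ directly). For $k=|\alpha\beta|+n$ with $n\ge1$, the definition of $G$ gives $\eta_k=g_{(\alpha\beta)\gamma_{1,n}}(\xi_n)$, whereas unwinding the two gluings gives $\eta'_k=g_{(\alpha)\beta\gamma_{1,n}}(g_{(\beta)\gamma_{1,n}}(\xi_n))$.

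Thus the entire content reduces to the composition law $g_{(\alpha\beta)\delta}=g_{(\alpha)\beta\delta}\circ g_{(\beta)\delta}$ for the ultrafilter-level gluing maps, which is immediate from \eqref{eq:def.g}: for $\ft\in X_{(\alpha\beta)\delta}$ (so $r(\alpha\beta\delta)\in\ft$, hence $r(\beta\delta)\in\ft$ and the composite is defined),
\[ g_{(\alpha)\beta\delta}(g_{(\beta)\delta}(\ft))=\{(C\cap r(\beta\delta))\cap r(\alpha\beta\delta)\mid C\in\ft\}=\{C\cap r(\alpha\beta\delta)\mid C\in\ft\}=g_{(\alpha\beta)\delta}(\ft), \]
the middle equality being the inclusion $r(\alpha\beta\delta)=r(r(\alpha),\beta\delta)\subseteq r(\beta\delta)$. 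This disposes of the top levels, and completeness forces agreement below; the case $\gamma=\eword$ is the same one-line computation with $\delta=\eword$.

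It remains to check that the right-hand composite is actually defined on the domain $\ftightw{(\alpha\beta)\gamma}$ of the left-hand side: since $r(\alpha\beta)\subseteq r(\beta)$ and filters are upward closed we have $\ftightw{(\alpha\beta)\gamma}\subseteq\ftightw{(\beta)\gamma}$, and a short computation using completeness of $\xi$ (the condition $r(\alpha\beta)\in\xi_0$ yields $r(\alpha\beta\gamma_1)\in\xi_1$, which forces $r(\alpha)$ into the bottom filter of $G_{(\beta)\gamma}(\xi)$) shows that $G_{(\beta)\gamma}$ carries $\ftightw{(\alpha\beta)\gamma}$ into $\ftightw{(\alpha)\beta\gamma}$. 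With the gluing identity in hand as an equality of bijections, the cutting identity is purely formal: Theorem~\ref{thm.H-class.G-class.inverses} identifies each $H$ with the inverse of the corresponding $G$, so inverting gives $H_{[\alpha\beta]\gamma}=(G_{(\beta)\gamma})^{-1}\circ(G_{(\alpha)\beta\gamma})^{-1}=H_{[\beta]\gamma}\circ H_{[\alpha]\beta\gamma}$. I expect the genuine algebra to be trivial, collapsing to $r(\alpha\beta\delta)\subseteq r(\beta\delta)$; the only real work will be the bookkeeping, namely tracking which levels of the three differently-indexed families come from $g$ and which from completeness, and verifying the domain inclusions (together with the $\gamma=\eword$ and infinite-$\gamma$ boundary cases) that legitimize both the composite and its inverse.
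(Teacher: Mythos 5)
This theorem is imported by the paper from \cite{MR3680957} (Lemmas 4.13 and 4.16) and is stated without proof, so there is no in-paper argument to compare against; I can only assess your proposal on its own terms, and it is essentially correct. Your reduction is the right one: since a tight filter is determined by its word and its complete family, and completeness determines $\ftg{F}_n$ from $\ftg{F}_{n+1}$, it suffices to match the two families at every level strictly above $|\alpha\beta|$ (or at level $|\alpha\beta|$ when $\gamma=\eword$) and let completeness propagate agreement downward; unwinding the definitions at those levels does collapse everything to the one-line identity $g_{(\alpha)\beta\delta}\circ g_{(\beta)\delta}=g_{(\alpha\beta)\delta}$, which follows from $r(\alpha\beta\delta)\subseteq r(\beta\delta)$ exactly as you compute, and your domain verification ($r(\alpha\beta)\in\xi_0$ forces $r(\alpha\beta\gamma_1)\in\xi_1$, hence $r(\alpha)$ lies in the bottom filter of $G_{(\beta)\gamma}(\xi)$) is correct. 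Two small points deserve attention. First, for a literal equality of maps you should also check the reverse domain inclusion, i.e.\ that any $\xi\in\ftightw{(\beta)\gamma}$ with $G_{(\beta)\gamma}(\xi)\in\ftightw{(\alpha)\beta\gamma}$ already lies in $\ftightw{(\alpha\beta)\gamma}$; this follows by running your computation backwards (Lemma \ref{lemma:A.belongs.g(F)} plus completeness), but it is not stated. Second, deducing the $H$-identity by inverting the $G$-identity via Theorem \ref{thm.H-class.G-class.inverses} is legitimate within this paper, where that theorem is quoted as a standalone result, but note that in the source the composition lemmas precede the inverse theorem, so a self-contained development would want to confirm that $H_{[\alpha]\beta}=(G_{(\alpha)\beta})^{-1}$ does not itself rest on the composition laws (it does not appear to: it is a levelwise statement about a single cut/glue pair).
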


\subsection{The C*-algebra of a labelled space} Let $\lspace$ be a weakly left-resolving, normal labelled space. The \emph{C*-algebra associated with} $\lspace$, denoted by $C^*\lspace$, is the universal $C^*$-algebra generated by projections $\{p_A \ | \ A\in \acf\}$ and partial isometries $\{s_a \ | \ a\in\alf\}$ subject to the relations
\begin{enumerate}[(i)]
	\item $p_{A\cap B}=p_Ap_B$, $p_{A\cup B}=p_A+p_B-p_{A\cap B}$ and $p_{\emptyset}=0$, for every $A,B\in\acf$;
	\item $p_As_a=s_ap_{r(A,a)}$, for every $A\in\acf$ and $a\in\alf$;
	\item $s_a^*s_a=p_{r(a)}$ and $s_b^*s_a=0$ if $b\neq a$, for every $a,b\in\alf$;
	\item For every $A\in\acf$ for which $0<\card{\lbf(A\dgraph^1)}<\infty$ and there does not exist $B\in\acf$ such that $\emptyset\neq B\subseteq A\cap \dgraph^0_{sink}$,
	\[p_A=\sum_{a\in\lbf(A\dgraph^1)}s_ap_{r(A,a)}s_a^*.\]
\end{enumerate}

For each word $\alpha=a_1a_2\cdots a_n$, define $s_\alpha=s_{a_1}s_{a_2}\cdots s_{a_n}$; we also set $s_{\eword}=1$, where $\eword$ is the empty word. We observe that $s_\eword$ does not belong to $C^*\lspace$ unless it is unital -- we work with $s_{\eword}$ to simplify our statements. For example, $s_{\eword}p_{A}s_{\eword}^*$ means $p_{A}$. We never use $s_{\eword}$ alone.

\begin{proposition}\label{prop:closure.span}
	Let $\lspace$ be a weakly left-resolving normal labelled space. Then \[C^*\lspace = \overline{\mathrm{span}}\{s_\alpha p_A s_\beta^* \ | \ \alpha,\beta\in\awstar \ \mbox{and} \ A\in\acfrg{\alpha}\cap\acfrg{\beta}\}.\]
\end{proposition}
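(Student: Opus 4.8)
Write $M$ for the closed linear span appearing on the right-hand side. The plan is to show that $M$ is a C*-subalgebra of $C^*\lspace$ that contains every generator; since $C^*\lspace$ is by definition generated by the projections $p_A$ and the partial isometries $s_a$, this forces $M=C^*\lspace$. The inclusion $M\subseteq C^*\lspace$ is immediate, as each $s_\alpha p_A s_\beta^*$ is a finite product of generators.

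First I would check that the generators already lie in the spanning set. Taking $\alpha=\beta=\eword$ gives $s_\eword p_A s_\eword^*=p_A$ for every $A\in\acf=\acfrg{\eword}$. For $s_a$, relation (iii) yields $s_a=s_as_a^*s_a=s_ap_{r(a)}=s_ap_{r(a)}s_\eword^*$, and since $r(a)\in\acf$ is the largest element of $\acfrg{a}$, this is of the required form. Closure of the span under involution is clear from $(s_\alpha p_A s_\beta^*)^*=s_\beta p_A s_\alpha^*$ together with the symmetry of the condition $A\in\acfrg{\alpha}\cap\acfrg{\beta}$.

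The substantial point is closure under multiplication, for which it suffices to rewrite $(s_\alpha p_A s_\beta^*)(s_\gamma p_B s_\delta^*)=s_\alpha p_A(s_\beta^*s_\gamma)p_B s_\delta^*$ as a single basis element or $0$. I would first record, by induction on $|\beta|$ from relations (ii) and (iii), that $s_\beta^*s_\beta=p_{r(\beta)}$ for $\beta\in\awplus$, and then evaluate $s_\beta^*s_\gamma$ in three cases: it is $0$ when $\beta$ and $\gamma$ are not comparable (some factor $s_b^*s_a$ with $a\neq b$ appears), it is $p_{r(\beta)}s_{\gamma'}$ when $\gamma=\beta\gamma'$, and it is $s_{\beta'}^*p_{r(\gamma)}$ when $\beta=\gamma\beta'$. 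Substituting these and transporting the surviving projection across the remaining partial isometries by repeated use of relation (ii) --- together with $A\subseteq r(\beta)$, $B\subseteq r(\gamma)$ and relation (i) --- the three cases reproduce exactly the three cases of the product in $S$ described in Subsection \ref{subsection:inverse.semigroup}: the outcome is $0$, or $s_{\alpha\gamma'}p_{r(A,\gamma')\cap B}s_\delta^*$, or $s_\alpha p_{A\cap r(B,\beta')}s_{\delta\beta'}^*$. That the middle set lies in $\acfrg{\alpha\gamma'}\cap\acfrg{\delta}$ (respectively $\acfrg{\alpha}\cap\acfrg{\delta\beta'}$), and is empty precisely when the corresponding product in $S$ is $0$ (so that $p_\emptyset=0$), is exactly the well-definedness of the multiplication in $S$. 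In other words, this step verifies that $(\alpha,A,\beta)\mapsto s_\alpha p_A s_\beta^*$, $0\mapsto 0$, is a representation of the inverse semigroup $S$.

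With linear combinations, involution and products accounted for, the span is a $*$-subalgebra, so its closure $M$ is a C*-subalgebra containing all generators, whence $M=C^*\lspace$. I expect the multiplication step to be the main obstacle: one must keep careful track of which of $\beta,\gamma$ is a beginning of the other and verify that moving $p_A$ (or $p_B$) through the leftover partial isometries via relation (ii) produces precisely the relative-range sets $r(A,\gamma')$ and $r(B,\beta')$ of the semigroup product, using the identity $r(r(A,\mu),\nu)=r(A,\mu\nu)$. The remaining verifications are routine manipulations with the defining relations.
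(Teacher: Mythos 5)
Your proposal is correct and is exactly the standard argument: the paper itself omits the proof, deferring to \cite{MR3614028} and \cite{MR3680957}, where the same strategy is carried out --- show the span contains the generators, is $*$-closed, and is closed under multiplication by reducing $s_\beta^* s_\gamma$ to the three comparability cases and pushing projections through via relation (ii), which reproduces the product of the inverse semigroup $S$. The key identities you invoke ($s_\beta^*s_\beta=p_{r(\beta)}$ and $r(r(A,\mu),\nu)=r(A,\mu\nu)$) are indeed what make the case analysis close up, so there is no gap.
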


For details, see \cite{MR3614028} and \cite{MR3680957}.

\section{The groupoid associated with a labelled space}

In \cite[Section 4]{MR2419901} a certain action of inverse semigroups on their tight spectra is constructed, from which one can associate a groupoid of germs. In this section we give an isomorphism between this groupoid of germs and a kind of boundary path groupoid (such as in \cite{MR2184052} and \cite{MR2301938}). This description will facilitate the study of the groupoid C*-algebra in Section \ref{section:cstar}.

We begin by constructing, in the present context, the groupoid $\mathcal{G}_{tight}$ as in \cite{MR2419901}. Let $\lspace$ be a labelled space, with associated inverse semigroup $S$. For each idempotent $e\in E(S)$, define $D_e=\{\phi\in\hat{E}_{tight}\ |\ \phi(e)=1\}$ and $\Omega=\{(s,\phi)\in S\times\hat{E}_{tight}\ |\ \phi\in D_{s^*s} \}$. The action $\theta$ of $S$ on $\hat{E}_{tight}$ is given by
\begin{align}\label{eq.def.action}
	\theta_s(\phi)(e)=\phi(s^*es).
\end{align}

The following is an equivalence relation on $\Omega$: $(s,\phi)\sim(t,\psi)$ if and only if $\phi=\psi$ and there exists $e\in E(S)$ such that $\phi\in D_{e}$ and $se=te$. Let $\mathcal{G}_{tight}=\Omega/\sim$ and denote the class of $(s,\phi)$ by $[s,\phi]$. Set
\[ \mathcal{G}^{(2)}_{tight}=\left\lbrace\left([s,\phi],[t,\psi] \right)\in\mathcal{G}_{tight}\times\mathcal{G}_{tight}\ |\ \phi=\theta_t(\psi)  \right\rbrace  \]
and for $ \left([s,\phi],[t,\psi] \right)\in\mathcal{G}^{(2)}_{tight} $ define
\[ [s,\phi]\cdot[t,\psi]=[st,\psi]. \]
Also, for $ [s,\phi]\in\mathcal{G}_{tight} $ let
\[ [s,\phi]^{-1}=[s^*,\theta_s(\phi)]. \]
Then $\mathcal{G}_{tight}$ is a groupoid with operations defined as above.

\begin{remark}\label{remark.element.of.omega}
	Given a non zero element $s=(\mu,A,\nu)\in S$ and $\phi\in\hat{E}_{tight}$, let us characterize when $\phi(s^*s)=1$. Since $s^*s=(\nu,A,\nu)$, if $\xi^{\alpha}$ is the filter on $E(S)$ associated with $\phi$ then $\phi(s^*s)=1$ if and only if $(\nu,A,\nu)\in\xi^{\alpha}$; from (\ref{eq.defines.xi_n}), this happens if and only if $\nu$ is a beginning of $\alpha$ and $A\in\xi^{\alpha}_{|\nu|}$.
\end{remark}

\begin{proposition} \label{prop:equivalence.relation.on.omega}
	Let $s=(\mu,A,\nu)$, $t=(\beta,B,\gamma)$ be two non zero elements in $S$ and $\phi$ be a tight character associated with a filter $\xi^{\alpha}$ on $E(S)$. Suppose that $(s,\phi),(t,\phi)\in\Omega$ and that $\nu$ is a beginning of $\gamma$ with $\gamma=\nu\gamma'$. Then $(s,\phi)\sim(t,\phi)$ if and only if $\beta=\mu\gamma'$.
\end{proposition}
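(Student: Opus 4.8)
The plan is to prove both implications by direct computation inside $S$, using three ingredients: the membership criterion for $\xi^\alpha$ recalled in Remark~\ref{remark.element.of.omega}, the description of the natural order on $E(S)$ from \cite[Proposition~4.1]{MR3648984}, and the fact that a tight character $\phi$ is multiplicative on the semilattice $E(S)$ with $\phi(0)=0$. Throughout I keep in mind that $s^*s=(\nu,A,\nu)$ and $t^*t=(\gamma,B,\gamma)$, that the hypotheses give $\phi(s^*s)=\phi(t^*t)=1$, and that any idempotent witnessing the equivalence has the form $(\delta,C,\delta)$ and is automatically nonzero as soon as $\phi$ takes the value $1$ on it.

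For the implication ($\Leftarrow$), assume $\beta=\mu\gamma'$ and take the idempotent $e=s^*s\,t^*t$. Multiplicativity of $\phi$ gives $\phi(e)=1$, so in particular $e\neq 0$; computing $(\nu,A,\nu)(\gamma,B,\gamma)$ by the first case of the product (since $\gamma=\nu\gamma'$) shows $e=(\gamma,r(A,\gamma')\cap B,\gamma)$, so that $r(A,\gamma')\cap B\neq\emptyset$. Using $ss^*s=s$, $tt^*t=t$ and the commutativity of idempotents, I would rewrite $se=s\,t^*t$ and $te=t\,s^*s$. The product $s\,t^*t$ falls in the first case and equals $(\mu\gamma',r(A,\gamma')\cap B,\gamma)$, while $t\,s^*s$ falls in the second case and equals $(\beta,B\cap r(A,\gamma'),\gamma)$. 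Substituting $\beta=\mu\gamma'$ makes the two coincide, so $se=te$ with $\phi(e)=1$, i.e. $(s,\phi)\sim(t,\phi)$.

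For ($\Rightarrow$), suppose $(s,\phi)\sim(t,\phi)$ with witness $e$ satisfying $\phi(e)=1$ and $se=te$. First I would normalize the witness by replacing $e$ with $\tilde e=e\,s^*s\,t^*t$. This is still idempotent, still satisfies $\phi(\tilde e)=1$ (hence $\tilde e\neq 0$), still satisfies $s\tilde e=t\tilde e$ (because $se=te$ gives $sef=tef$ for every idempotent $f$), and now $\tilde e\leq s^*s$ and $\tilde e\leq t^*t$. Writing $\tilde e=(\delta,C,\delta)$, the order description in \cite[Proposition~4.1]{MR3648984} forces both $\nu$ and $\gamma$ to be beginnings of $\delta$; together with $\gamma=\nu\gamma'$ this gives $\delta=\gamma\delta_2=\nu\gamma'\delta_2$ for some $\delta_2$. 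Both $s\tilde e$ and $t\tilde e$ then fall in the first case, yielding $s\tilde e=(\mu\gamma'\delta_2,r(A,\gamma'\delta_2)\cap C,\delta)$ and $t\tilde e=(\beta\delta_2,r(B,\delta_2)\cap C,\delta)$. Comparing the first coordinates of the equal elements $s\tilde e=t\tilde e$ gives $\mu\gamma'\delta_2=\beta\delta_2$, and right-cancellation of words in $\alf^{\ast}$ yields $\beta=\mu\gamma'$.

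The routine bookkeeping is that each product is nonzero and lands in the expected case; this I would dispatch by noting $\tilde e\leq s^*s$ forces $(s\tilde e)^*(s\tilde e)=\tilde e\neq 0$, so $s\tilde e\neq 0$, and symmetrically for $t$. I expect the main obstacle to be precisely the ($\Rightarrow$) normalization: for an \emph{arbitrary} witnessing idempotent $e$ the products $se$ and $te$ need not lie in the same case of the multiplication, so they cannot be compared coordinatewise. Passing to $\tilde e\leq s^*s,t^*t$ is exactly what pushes both products into the first case and turns their first coordinates into equal words sharing the common suffix $\delta_2$, which can then be cancelled; arranging this reduction cleanly is the crux of the argument.
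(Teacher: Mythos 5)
Your proof is correct, and the forward direction takes a genuinely different route from the paper's. The paper works with an arbitrary witnessing idempotent $e=(\delta,N,\delta)$ and is therefore forced into a three-way case analysis on how $|\delta|$ compares with $|\nu|$ and $|\gamma|$: the case $|\delta|\le|\nu|$ forces $\gamma'=\eword$ directly, the case $|\delta|\ge|\gamma|$ gives the cancellation argument, and the intermediate case $|\nu|<|\delta|<|\gamma|$ is ruled out by contradiction. Your normalization $\tilde e=e\,s^*s\,t^*t$ collapses all of this: since $\tilde e\le s^*s$ and $\tilde e\le t^*t$, the order description of $E(S)$ forces $\delta$ to extend both $\nu$ and $\gamma$, so both $s\tilde e$ and $t\tilde e$ land in the first case of the multiplication and the conclusion follows by cancelling the common suffix $\delta_2$ -- the standard ``shrink the witness'' manoeuvre for germ relations, which is cleaner than the paper's exhaustion of cases. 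For the converse you end up with the same idempotent as the paper, since $s^*s\,t^*t=(\gamma,r(A,\gamma')\cap B,\gamma)$, but you justify its nonvanishing by multiplicativity of the tight character ($\phi(s^*s\,t^*t)=\phi(s^*s)\phi(t^*t)=1$) rather than by the completeness of the family $\{\xi^\alpha_n\}_n$ as the paper does; both are valid, yours being marginally shorter. The bookkeeping you flag (each product nonzero and in the expected case) does go through as you indicate, e.g.\ $(s\tilde e)^*(s\tilde e)=\tilde e\neq 0$ because $\tilde e\le s^*s$.
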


\begin{proof}
	Suppose that $(s,\phi)\sim(t,\phi)$, then there exists $e=(\delta,N,\delta)\in E(S)$ such that $\phi\in D_e$ and $se=te$. Since $\phi \in D_e$, we have $\phi(e)=1$; therefore, Remark \ref{remark.element.of.omega} ensures $\delta$ is a beginning of $\alpha$ and $N\in\xi_{|\delta|}^\alpha$. Additionally, it follows from $(t,\phi)\in\Omega$ that $\phi\in D_{t^*t}$. Remark \ref{remark.element.of.omega} then says $\gamma$ (and thus also $\nu$, since $\gamma=\nu\gamma'$) is a beginning of $\alpha$ and hence $\delta$ is comparable with $\gamma$ (and $\nu$). There are a few cases to consider.
	
	First consider that $|\delta|\leq|\nu|\leq|\gamma|$. Writing $\nu=\delta\nu'$ and $\gamma=\delta\gamma''$ for $\mu',\gamma''\in\awstar$ we have
	\begin{align*}
		se & = (\mu,A\cap r(N,\nu'),\nu) \\
		te& = (\beta,B\cap r(N,\gamma''),\gamma).
	\end{align*}
	It follows from $se=te$ that $\gamma=\nu$, that is, $\gamma'=\eword$, hence $\beta=\mu=\mu\gamma'$ as desired.
	
	Now suppose that $|\nu|\leq|\gamma|\leq|\delta|$. In this case $\delta=\gamma\delta'=\nu\gamma'\delta'$ for $\delta'\in\awstar$, and
	\begin{align*}
		se & = (\mu\gamma'\delta',r(A,\gamma'\delta')\cap N,\delta) \\
		te & = (\beta\delta',r(B,\delta')\cap N,\delta),
	\end{align*}
	whence $\beta\delta'=\mu\gamma'\delta'$ and therefore $\beta=\mu\gamma'$.
	
	Finally, if $|\nu|<|\delta|<|\gamma|$, from the previous cases the third coordinate of $se$ is $\delta$ and the third coordinate of $te$ is $\gamma$; however, since $|\delta|<|\gamma|$, we have $\delta\neq\gamma$ and $se\neq te$, which is a contradiction.
	
	For the converse, suppose that $\beta=\mu\gamma'$ and define $e=(\gamma,r(A,\gamma')\cap B,\gamma)$. Let us first check that $e\in E(S)\setminus\{0\}$. From $(s,\phi),(t,\phi)\in \Omega$, Remark \ref{remark.element.of.omega} gives $A\in\xi^{\alpha}_{|\nu|}$ and $B\in\xi^{\alpha}_{|\gamma|}$. Using that  $|\gamma|=|\nu|+|\gamma'|$, $A\in\xi^{\alpha}_{|\nu|}$ and the completeness of the family $\{\xi^\alpha_n\}_n$ we obtain $r(A,\gamma')\in\xi^{\alpha}_{|\gamma|}$. In particular $B\cap r(A,\gamma')\in\xi^{\alpha}_{|\gamma|}$ since $\xi^{\alpha}_{|\gamma|}$ is a filter and so $\emptyset\neq r(A,\gamma')\cap B\in\acfrg{\gamma}$, hence $e\in E(S)\setminus\{0\}$. Now
	\begin{align*}
		se & =(\mu\gamma',r(A,\gamma')\cap B,\gamma) \\
		te & = (\beta,r(A,\gamma')\cap B,\gamma),
	\end{align*}
	so that $se=te$ and $(s,\phi)\sim(t,\phi)$.
	
\end{proof}

\begin{remark}\label{remark.equivalence.relation.on.omega}
	Let $s=(\mu,A,\nu)$, $t=(\beta,B,\gamma)$ be such that $(s,\phi),(t,\phi)\in\Omega$ for some $\phi\in\hat{E}_{tight}$. If $\xia$ is the filter associated with $\phi$ then $\nu$ and $\gamma$ are both beginnings of $\alpha$ so that they are comparable. It follows from Proposition \ref{prop:equivalence.relation.on.omega} that $(s,\phi)\sim(t,\phi)$ if and only if $\mu$ and $\nu$ are beginnings of $\beta$ and $\gamma$ respectively with the same ending, or the other way around.
\end{remark}

We now focus our attention in defining an analogue to the boundary path groupoid of a graph (\cite{MR2184052}, \cite{MR2301938}) in the setting of labelled spaces. Let $s=(\mu,A,\nu)\in S$ and $\phi\in\hat{E}_{tight}$ be such that $(s,\phi)\in\Omega$. If $\xia$ is the filter corresponding to $\phi$ then by Remark \ref{remark.element.of.omega}, $\nu $ is a beginning of $\alpha$, that is $\alpha=\nu\alpha'$ for some $\alpha'\in\awleinf$.

As with the case of graphs, a candidate to be associated with the class $[s,\phi]$ would be the triple $(\mu\alpha',|\mu|-|\nu|,\nu\alpha')$, but doing so would ignore the information contained in the family $ \{\xi^{\alpha}_n\}_n $ of ultrafilters. In order not to lose this information we cut and glue tight filters adequately, using filter surgery as described in Subsection \ref{subsection.filter.surgery}, to build a new tight filter $\eta^{\mu\alpha'}$ from $\xia$ by cutting off $\nu$ from the beginning of $\alpha=\nu\alpha'$ and gluing $\mu$ in its place.



\begin{proposition}
	Let $\lspace$ be a weakly left-resolving, normal labelled space and define
	\[ \Gamma=\{(\xi^{\alpha\gamma},|\alpha|-|\beta|,\eta^{\beta\gamma})\in \ftight\times\mathbb{Z}\times\ftight\ |\ H_{[\alpha]\gamma}(\xi^{\alpha\gamma})=H_{[\beta]\gamma}(\eta^{\beta\gamma})\}. \]
	Then $\Gamma$ is a groupoid with product given by
	\[ (\xi,m,\eta)(\eta,n,\rho)=(\xi,m+n,\rho) \]
	and an inverse given by
	\[ (\xi,m,\eta)^{-1}=(\eta,-m,\xi). \]
\end{proposition}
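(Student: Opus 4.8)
My plan is to note first that the product $(\xi,m,\eta)(\eta,n,\rho)=(\xi,m+n,\rho)$ and the inverse $(\xi,m,\eta)^{-1}=(\eta,-m,\xi)$ are prescribed entirely by the triple data, so there is nothing to check for well-definedness once we know the operations stay inside $\Gamma$. The substance of the proof is therefore to verify that $\Gamma$ is closed under the product and the inverse, after which the groupoid axioms follow by elementary manipulations of the integer coordinate and the chaining of the middle filters. Closure under inversion is immediate: if $(\xi,m,\eta)\in\Gamma$ is witnessed by $\xi=\xi^{\alpha\gamma}$, $\eta=\eta^{\beta\gamma}$, $m=|\alpha|-|\beta|$ and $H_{[\alpha]\gamma}(\xi^{\alpha\gamma})=H_{[\beta]\gamma}(\eta^{\beta\gamma})$, then the very same decomposition with the two filters interchanged witnesses $(\eta,-m,\xi)\in\Gamma$, since the defining equality is symmetric and $-m=|\beta|-|\alpha|$. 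Thus the only real work lies in closure under the product.

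The key technical point I would isolate first is that membership in $\Gamma$ does not depend on how the common suffix is chosen. Precisely, I claim that if $\gamma=\zeta\gamma_0$ then
\[ H_{[\alpha]\gamma}(\xi^{\alpha\gamma})=H_{[\beta]\gamma}(\eta^{\beta\gamma}) \iff H_{[\alpha\zeta]\gamma_0}(\xi^{\alpha\gamma})=H_{[\beta\zeta]\gamma_0}(\eta^{\beta\gamma}), \]
while $|\alpha|-|\beta|=|\alpha\zeta|-|\beta\zeta|$ is unchanged. To prove the equivalence I use Theorem \ref{thm.compositions.G.and.H}, which (with $\zeta\gamma_0=\gamma$) gives the factorizations $H_{[\alpha\zeta]\gamma_0}=H_{[\zeta]\gamma_0}\circ H_{[\alpha]\gamma}$ and $H_{[\beta\zeta]\gamma_0}=H_{[\zeta]\gamma_0}\circ H_{[\beta]\gamma}$. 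Applying $H_{[\zeta]\gamma_0}$ to both sides yields the forward implication; for the converse I invoke Theorem \ref{thm.H-class.G-class.inverses}, by which $H_{[\zeta]\gamma_0}=(G_{(\zeta)\gamma_0})^{-1}$ is a bijection, hence injective, so the two values must already agree before $H_{[\zeta]\gamma_0}$ is applied. In short, transferring a prefix of the common suffix into both outer words preserves membership in $\Gamma$.

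With this lemma in hand, closure under the product becomes bookkeeping. Suppose $(\xi,m,\eta)$ and $(\eta,n,\rho)$ are composable, witnessed respectively by $\xi=\xi^{\alpha\gamma}$, $\eta=\eta^{\beta\gamma}$ and by $\eta=\eta^{\beta'\gamma'}$, $\rho=\rho^{\delta\gamma'}$. Since $\gamma$ and $\gamma'$ are both tails of the single word of $\eta$, they are comparable, so one is a suffix of the other; let $\gamma_0$ be the shorter. Using the independence claim I transport whichever witness has the longer suffix down to $\gamma_0$, obtaining decompositions $\xi=\xi^{\tilde\alpha\gamma_0}$, $\eta=\eta^{\tilde\beta\gamma_0}$, $\rho=\rho^{\tilde\delta\gamma_0}$, where crucially the prefix $\tilde\beta$ of $\eta$ is the same in both (it is the word of $\eta$ with $\gamma_0$ removed), together with $m=|\tilde\alpha|-|\tilde\beta|$, $n=|\tilde\beta|-|\tilde\delta|$ and
\[ H_{[\tilde\alpha]\gamma_0}(\xi)=H_{[\tilde\beta]\gamma_0}(\eta)=H_{[\tilde\delta]\gamma_0}(\rho). \]
Reading off the two ends gives $H_{[\tilde\alpha]\gamma_0}(\xi)=H_{[\tilde\delta]\gamma_0}(\rho)$ with $m+n=|\tilde\alpha|-|\tilde\delta|$, which is exactly a witness for $(\xi,m+n,\rho)\in\Gamma$.

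Finally I would record the groupoid axioms. The composable pairs are exactly those of the form $\big((\xi,m,\eta),(\eta,n,\rho)\big)$, the units are the triples $(\xi,0,\xi)$ (which lie in $\Gamma$ via the trivial decomposition with empty outer words and $\gamma_0$ equal to the word of $\xi$), and associativity together with the relations $(\xi,m,\eta)^{-1}(\xi,m,\eta)=(\eta,0,\eta)$, $(\xi,m,\eta)(\xi,m,\eta)^{-1}=(\xi,0,\xi)$ and $\big((\xi,m,\eta)^{-1}\big)^{-1}=(\xi,m,\eta)$ all follow by inspection from the additive rule on the integer coordinate. I expect the only genuinely delicate step to be the suffix-independence claim and the attendant bookkeeping of common tails of $\eta$; the edge cases in which some outer word is empty are absorbed by the conventions $H_{[\eword]\beta}=\mathrm{id}$ and should not require a separate argument.
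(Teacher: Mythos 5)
Your proposal is correct and follows essentially the same route as the paper: the paper likewise reduces everything to closure under the product, observes that the two middle decompositions of $\eta$ force the suffixes to be comparable, and uses Theorem \ref{thm.compositions.G.and.H} to push the witness with the longer suffix down to the common shorter one before chaining the two equalities. Your isolation of this step as an explicit ``suffix-independence'' claim (with the converse direction via injectivity of $H$, which is not actually needed) is only a presentational difference.
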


\begin{proof}	
	The only difficulty lies in proving that the product $(\xi,m+n,\rho)$ is an element of $\Gamma$. On the one hand, from $(\xi,n,\eta)\in\Gamma$ we can write $\xi=\xi^{\alpha\gamma}$ and $\eta=\eta^{\beta\gamma}$ with $|\alpha|-|\beta|=m$; on the other hand, since $(\eta,n,\rho)\in\Gamma$ we have $\eta=\eta^{\beta'\gamma'}$ and $\rho=\rho^{\delta\gamma'}$ with $|\beta'|-|\delta|=n$. Now $\eta=\eta^{\beta\gamma}=\eta^{\beta'\gamma'}$, so $\beta\gamma=\beta'\gamma'$; this ensures that either $\beta$ is a beginning of $\beta'$ or $\beta'$ is a beginning of $\beta$.
	
	In the case that $\beta$ is a beginning of $\beta'$, say $\beta'=\beta\gamma''$, \[\beta\gamma=\beta'\gamma'=\beta\gamma''\gamma',\] so that $\gamma=\gamma''\gamma'$. Therefore $\xi=\xi^{\alpha\gamma}=\xi^{(\alpha\gamma'')\gamma'}$, $\rho=\rho^{\delta\gamma'}$, and
	\begin{align*}
		m+n & = (|\alpha|-|\beta|)+(|\beta'|-|\delta|) \\
		& = |\alpha|-|\beta|+|\beta\gamma''|-|\delta| \\
		& = |\alpha\gamma''|-|\delta|.
	\end{align*}
	Additionally, using that $H_{[\alpha]\gamma}(\xi)=H_{[\beta]\gamma}(\eta)$ and $H_{[\beta']\gamma'}(\eta)=H_{[\delta]\gamma'}(\rho)$,
	\begin{align*}
		H_{[\alpha\gamma'']\gamma'}(\xi) & = (H_{[\gamma'']\gamma'}\circ H_{[\alpha]\gamma''\gamma'})(\xi) = (H_{[\gamma'']\gamma'}\circ H_{[\alpha]\gamma})(\xi) \\
		& = (H_{[\gamma'']\gamma'}\circ H_{[\beta]\gamma})(\eta) = (H_{[\gamma'']\gamma'}\circ H_{[\beta]\gamma''\gamma'})(\eta) \\
		& = H_{[\beta\gamma'']\gamma'}(\eta) = H_{[\beta']\gamma'}(\eta)\\
		& = H_{[\delta]\gamma'}(\rho)
	\end{align*}
	whence $(\xi,m+n,\rho)\in \Gamma$. The case that $\beta'$ is a beginning of $\beta$ is analogous.	
\end{proof}

\begin{lemma}\label{lemma:intersection.in.filter}
	Let $\lspace$ be a weakly left-resolving, normal labelled space. Suppose that $\alpha,\beta\in\awstar$ are such that $\alpha\beta\in\awstar$, and that $\ft$ is a filter on $\acfrg{\alpha\beta}$. If $A\in\ft$ and $C\in \usetr{\ft}{\acfrg{\beta}}$, then $A\cap C\in\ft$.
\end{lemma}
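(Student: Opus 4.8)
The plan is to unwind the definition of $\usetr{\ft}{\acfrg{\beta}}$ and then use only the two defining properties of a filter: closure under finite meets and upward closure. The preliminary observation that makes everything line up is that $\acfrg{\alpha\beta}\subseteq\acfrg{\beta}$. Indeed, from the standard relative range identity $r(\alpha\beta)=r(r(\alpha),\beta)$ together with monotonicity of $r(\,\cdot\,,\beta)$ we get $r(\alpha\beta)\subseteq r(\dgraph^0,\beta)=r(\beta)$, so every member of $\acf\cap\powerset{r(\alpha\beta)}$ is automatically a member of $\acf\cap\powerset{r(\beta)}$. In particular $\ft\subseteq\acfrg{\alpha\beta}\subseteq\acfrg{\beta}$, and the order $\leq$ appearing in the definition of $\usetr{\ft}{\acfrg{\beta}}$ is just set inclusion inside $\acfrg{\beta}$.

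First I would use $C\in\usetr{\ft}{\acfrg{\beta}}$ to fix a witness $D\in\ft$ with $D\subseteq C$. Since also $A\in\ft$ and $\ft$ is closed under meets, $A\cap D\in\ft$; and from $D\subseteq C$ we obtain $A\cap D\subseteq A\cap C$. The conclusion $A\cap C\in\ft$ will then follow from upward closure of $\ft$, provided $A\cap C$ actually lies in the algebra $\acfrg{\alpha\beta}$ on which $\ft$ is a filter.

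Verifying this last membership is the only real bookkeeping, and is where I expect whatever minor difficulty there is to sit: $A\cap C\in\acf$ because $\acf$ is closed under finite intersections, and $A\cap C\subseteq A\subseteq r(\alpha\beta)$ shows $A\cap C\in\powerset{r(\alpha\beta)}$, so $A\cap C\in\acfrg{\alpha\beta}$. Having confirmed $A\cap D\in\ft$ with $A\cap D\subseteq A\cap C\in\acfrg{\alpha\beta}$, upward closure of $\ft$ gives $A\cap C\in\ft$, completing the argument. There is no deep obstacle here; the entire task is to keep track of which Boolean algebra each set lives in, and in particular to see that $A\cap C$ does not escape $\acfrg{\alpha\beta}$ so that the filter's upward closure is applicable.
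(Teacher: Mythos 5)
Your proposal is correct and follows essentially the same route as the paper's proof: fix a witness $D\in\ft$ with $D\subseteq C$, use closure under meets to get $A\cap D\in\ft$, check that $A\cap C\in\acfrg{\alpha\beta}$ (via closure of $\acf$ under intersections and $A\cap C\subseteq A\subseteq r(\alpha\beta)$), and conclude by upward closure. The extra remark that $\acfrg{\alpha\beta}\subseteq\acfrg{\beta}$ is a harmless addition the paper leaves implicit.
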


\begin{proof}
	Notice that $A\cap C\in\acfrg{\alpha\beta}$ since $\acf$ is closed under intersections and $A\cap C\scj A\scj r(\alpha\beta)$. Also, there exists $X\in \ft$ such that $X\scj C$. Since $A\cap X\scj A\cap C$ and $A\cap X\in \ft$, we have that $A\cap C\in \ft$.
\end{proof}

\begin{lemma}\label{lemma:cut.and.glue}
	Let $\lspace$ be a weakly left-resolving, normal labelled space. Let $(t,\phi)$ be an element of $\Omega$ with $t=(\beta,A,\gamma)$ and $\phi\in\hat{E}_{tight}$ be a character associated with $\xia$ so that $\alpha=\gamma\alpha'$ for some $\alpha'\in\awleinf$. Then $\eta=(G_{(\beta)\alpha'}\circ H_{[\gamma]\alpha'})(\xia)$, where $\alpha=\gamma\alpha'$, is well defined, and $\eta$ is the character associated with $\theta_t(\phi)$ given by (\ref{eq.def.action}).
\end{lemma}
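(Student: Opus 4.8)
The plan is to treat the two assertions of the lemma in turn: first that the composite $(G_{(\beta)\alpha'}\circ H_{[\gamma]\alpha'})$ may legitimately be applied to $\xi^{\alpha}$, and then that the resulting tight filter $\eta$ carries exactly the tight character $\theta_t(\phi)$. Since $(t,\phi)\in\Omega$, Remark~\ref{remark.element.of.omega} already gives that $\gamma$ is a beginning of $\alpha$ (so the factorization $\alpha=\gamma\alpha'$ is genuine) and that $A\in\xi^{\alpha}_{|\gamma|}$; this last fact will be used repeatedly. The cutting map sends $\xi^{\alpha}\in\ftightw{\gamma\alpha'}$ into $\ftightw{(\gamma)\alpha'}$, and to feed the result into $G_{(\beta)\alpha'}$ I must check that $\zeta:=H_{[\gamma]\alpha'}(\xi^{\alpha})$ lies in $\ftightw{(\beta)\alpha'}$, i.e.\ that $r(\beta)\in\zeta_0$. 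By definition $\zeta_0=h_{[\gamma]\eword}(\xi^{\alpha}_{|\gamma|})$ consists of all elements of $\acf$ lying above some member of $\xi^{\alpha}_{|\gamma|}$; since $A\in\xi^{\alpha}_{|\gamma|}$ and $A\subseteq r(\beta)$ (because $A\in\acfrg{\beta}$) with $r(\beta)\in\acf$, well-definedness follows. The case $\beta=\eword$ is immediate, as $G_{(\eword)\alpha'}$ is then the identity.

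For the second assertion, let $\psi$ denote the character attached to $\eta$; because $G$ and $H$ carry tight filters to tight filters, $\eta\in\ftight$ and $\psi$ is automatically tight, so it suffices to prove $\psi(e)=\theta_t(\phi)(e)$ for every idempotent $e=(\delta,N,\delta)\in E(S)$, that is,
\[ e\in\eta \iff t^*et\in\xi^{\alpha}. \]
By Remark~\ref{remark.when.in.xialpha}, $e\in\eta$ holds precisely when $\delta$ is a beginning of the word $\beta\alpha'$ of $\eta$ and $N\in\eta_{|\delta|}$, which in particular forces $\delta$ to be comparable with $\beta$. A direct computation of $t^*et=(\gamma,A,\beta)(\delta,N,\delta)(\beta,A,\gamma)$ in $S$ shows that it is nonzero only in the two comparable cases and yields
\[ t^*et=(\gamma,\,A\cap r(N,\beta'),\,\gamma)\ \text{ if }\beta=\delta\beta', \qquad t^*et=(\gamma\delta',\,N\cap r(A,\delta'),\,\gamma\delta')\ \text{ if }\delta=\beta\delta'. \]
Applying Remark~\ref{remark.when.in.xialpha} to each, $t^*et\in\xi^{\alpha}$ reduces to the single level condition $A\cap r(N,\beta')\in\xi^{\alpha}_{|\gamma|}$ in the first case and $N\cap r(A,\delta')\in\xi^{\alpha}_{|\gamma|+|\delta'|}$ in the second (the two cases agreeing when $\delta=\beta$), while the incomparable case makes both sides false; so the proof splits cleanly along comparability of $\delta$ and $\beta$.

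It then remains, in each case, to match the level condition on $\eta$ read off from the explicit formulas for $G_{(\beta)\alpha'}$ and $H_{[\gamma]\alpha'}$ with the level condition on $\xi^{\alpha}$. The transport tools are Lemma~\ref{lemma:A.belongs.g(F)} and Lemma~\ref{lemma:A.belongs.h(F)}: they let me pass membership through the gluing $g_{(\beta)\,\cdot}$ and cutting $h_{[\gamma]\,\cdot}$ that build $\eta_{|\beta|+n}$ from $\xi^{\alpha}_{|\gamma|+n}$, using that the relevant sets sit below $r(\beta\cdots)$ or $r(\gamma\cdots)$. When $\beta=\delta\beta'$ I would pass $N\in\eta_{|\delta|}$ through the map $f$ to $r(r(N,\beta'),\alpha'_1)\in\eta_{|\beta|+1}$, then through the two lemmas to the existence of $D\in\xi^{\alpha}_{|\gamma|+1}$ with $D\subseteq r(r(N,\beta'),\alpha'_1)$, and finally, invoking completeness of $\{\xi^{\alpha}_n\}_n$ (which from $A\in\xi^{\alpha}_{|\gamma|}$ gives $r(A,\alpha'_1)\in\xi^{\alpha}_{|\gamma|+1}$) together with weak left-resolvingness ($r(A\cap P,\cdot)=r(A,\cdot)\cap r(P,\cdot)$), to $A\cap r(N,\beta')\in\xi^{\alpha}_{|\gamma|}$; the case $\delta=\beta\delta'$ runs along the same lines one level at a time, with completeness used to produce $r(A,\delta')\in\xi^{\alpha}_{|\gamma|+|\delta'|}$.

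The main obstacle is not conceptual but organizational: keeping straight the three Boolean algebras $\acfrg{\delta}$, $\acfrg{\beta\alpha'_{1,n}}$ and $\acfrg{\gamma\alpha'_{1,n}}$ in which the various sets live, and verifying at each step the containments (such as $r(A,\delta')\subseteq r(\gamma\delta')$) that make the cut-and-glue lemmas applicable. Once membership has been transported to a common level, each equivalence closes by a single filter argument using up-closedness and closure under finite intersection — and the empty-intersection subcases are handled automatically, since a filter never contains $\emptyset$, keeping the equivalence valid precisely when $t^*et=0$. The degenerate cases $\alpha'=\eword$, $\gamma=\eword$, and $\beta=\eword$ are checked separately but reduce to the same computations with the appropriate branch of the definition of $G_{(\beta)\alpha'}$.
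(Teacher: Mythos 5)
Your proposal is correct and follows essentially the same route as the paper's proof: the same well-definedness argument via $A\in\xi^{\alpha}_{|\gamma|}$ and $A\subseteq r(\beta)$, the same computation of $t^*et$ split by comparability of $\delta$ and $\beta$, and the same matching of level conditions using completeness, weak left-resolvingness, and the cut-and-glue lemmas. The only difference is cosmetic: in the case $\beta=\delta\beta'$ you transport membership up to level $|\gamma|+1$ and back via $f_{\delta[\beta'\alpha'_1]}$, whereas the paper evaluates $\eta_{|\delta|}=(f_{\delta[\beta']}\circ g_{(\beta)\eword}\circ h_{[\gamma]\eword})(\xi^{\alpha}_{|\gamma|})$ directly at level $|\gamma|$ — both close by the same filter argument.
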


\begin{proof}
	Let $(\delta,D,\delta)\in E(S)$. Then, using Remark \ref{remark.element.of.omega},
	\[ \theta_t(\phi)(\delta,D,\delta) = \phi((\gamma,A,\beta)(\delta,D,\delta)(\beta,A,\gamma)) \]
	
	\[ 	=\left\{	\begin{array}{ll} \phi(\gamma\delta',r(A,\delta')\cap D,\gamma\delta') & \text{if }\delta = \beta\delta' \\
	\phi(\gamma,A\cap r(D,\beta'),\gamma) & \text{if }\beta=\delta\beta' \\
	0 & \text{otherwise}
	\end{array} \right. \]
	
	\[ 	=\left\{	\begin{array}{ll} \left[\delta'\text{ is a beginning of }\alpha'\text{ and }r(A,\delta')\cap D\in\xi^{\alpha}_{|\gamma\delta'|}\right] & \text{if }\delta = \beta\delta' \\
	\lbrack A\cap r(D,\beta')\in\xi^{\alpha}_{|\gamma|} \rbrack & \text{if }\beta=\delta\beta' \\
	0 & \text{otherwise,}
	\end{array} \right. \]	
	where $[\ ]$ represents the boolean function that returns $0$ if the argument is false and $1$ if it is true. Note that for $\theta_t(\phi)(\delta,D,\delta)=1$ to hold, it is necessary for $\delta$ to be a beginning of $\beta\alpha'$. There is a also a condition on $D$ that depends on whether $\delta = \beta\delta'$ or $\beta=\delta\beta'$.
	
	To see that $\eta=(G_{(\beta)\alpha'}\circ H_{[\gamma]\alpha'})(\xia)$ is well defined first notice that $(\beta,A,\gamma)$ is an element of $S\lspace$ so that $\emptyset\neq A\subseteq r(\beta)\cap r(\gamma)$. If $\beta=\eword$, then the domain of $G_{(\beta)\alpha'}$ is $\ftightw{\alpha}$, which contains the range of $H_{[\gamma]\alpha'}$. If $\beta\neq\eword$ then by Remark \ref{remark.element.of.omega} $A\in \xi^{\alpha}_{|\gamma|}$, and since $A\subseteq r(\beta)$, this implies that $r(\beta)\in H_{[\gamma]\alpha'}(\xia)_0$ by the definition of $H$. Finally, from the definition of $\ftightw{(\beta)\alpha'}$ we conclude that $H_{[\gamma]\alpha'}(\xia)\in \ftightw{(\beta)\alpha'}$, which is the domain of $G_{(\beta)\alpha'}$.
	
	Let $\psi$ be the character associated with $\eta=(G_{(\beta)\alpha'}\circ H_{[\gamma]\alpha'})(\xia)$, then
	\[ \psi(\delta,D,\delta)=\left[\delta\text{ is a beginning of }\beta\alpha'\text{ and }D\in\eta_{|\delta|} \right].\]
	
	If $\delta = \beta\delta'$ then, by (\ref{eq:def.g}) and (\ref{eq:def.h}),
	\dmal{\eta_{|\delta|} & =(g_{(\beta)\delta'}\circ h_{[\gamma]\delta'})(\xi_{|\gamma\delta'|})\\
		& = g_{(\beta)\delta'}\left(\usetr{\xi_{|\gamma\delta'|}}{\acfrg{\delta'}}\right)\\
		& = \left\{C\cap r(\beta\delta')\ |\ C\in\usetr{\xi_{|\gamma\delta'|}}{\acfrg{\delta'}}\right\}.}
	
	We claim that $D\in\eta_{|\delta|}$ if and only if $D\cap r(A,\delta')\in\xi^{\alpha}_{|\gamma\delta'|}$. On the one hand, if $D\in\eta_{|\delta|}$, then there exists $C\in\usetr{\xi_{|\gamma\delta'|}}{\acfrg{\delta'}}$ such that $D=C\cap r(\beta\delta')$. Since $(\beta,A,\gamma)\in S$ we have $A\in\acfrg{\beta}\cap \acfrg{\gamma}$ and so $r(A,\delta')\subseteq r(\beta\delta')$. Now, $D\cap r(A,\delta')=C\cap r(\beta\delta')\cap r(A,\delta')=C\cap r(A,\delta')\subseteq r(\gamma\delta')$, which is an element of $\xi^{\alpha}_{|\gamma\delta'|}$ by Lemma \ref{lemma:intersection.in.filter}.
	
	On the other hand if $D\cap r(A,\delta')\in\xi^{\alpha}_{|\gamma\delta'|}$, since $D\in\acfrg{\delta}\subseteq\acfrg{\delta'}$ and $r(A,\delta')\cap D\subseteq D$, then $D\in\usetr{\xi_{|\gamma\delta'|}}{\acfrg{\delta'}}$ and $D=D\cap r(\beta\delta')\in\eta_{|\delta|}$.
	
	Now suppose that $\beta=\delta\beta'$. In this case by the definitions of $G$ and $H$ given in Subsection \ref{subsection.filter.surgery},
	\dmal{\eta_{|\delta|} & = (f_{\delta[\beta']}\circ g_{(\beta)\eword}\circ h_{[\gamma]\eword})(\xi^{\alpha}_{|\gamma|})\\
		&=(f_{\delta[\beta']}\circ g_{(\beta)\eword})\left(\usetr{\xi^{\alpha}_{|\gamma|}}{\acf}\right)\\
		&=f_{\delta[\beta']}\left(\left\{C\cap r(\beta)\ |\ C\in\usetr{\xi^{\alpha}_{|\gamma|}}{\acf}\right\}\right)\\
		&=\left\{F\in\acfrg{\delta}\ |\ r(F,\beta')\in \left\{C\cap r(\beta)\ |\ C\in\usetr{\xi^{\alpha}_{|\gamma|}}{\acf}\right\} \right\}.
	}
	
	We claim that $D\in\eta_{|\delta|}$ if and only if $A\cap r(D,\beta')\in\xi^{\alpha}_{|\gamma|}$. In fact, if $D\in\eta_{|\delta|}$ then there exists $C\in\usetr{\xi^{\alpha}_{|\gamma|}}{\acf} $ such that $r(D,\beta')=C\cap r(\beta)$. Since $A\subseteq r(\beta)$ and $A\in\xi^{\alpha}_{|\gamma|}$, we have that $A\cap r(D,\beta')=A\cap C\cap r(\beta)=A\cap C$ which is an element of $\xi^{\alpha}_{|\gamma|}$ by Lemma \ref{lemma:intersection.in.filter}.
	
	On the other hand, if $A\cap r(D,\beta')\in\xi^{\alpha}_{|\gamma|}$, since $A\in\xi^{\alpha}_{|\gamma|}$, by choosing $C=r(D,\beta')$ in the expression for $\eta_{|\delta|}$, we see that $D\in\eta_{|\delta|}$.
	
	By comparing the formulas for $\psi$ and $\theta_t(\phi)$ we conclude that they are equal, that is, $\eta$ is the filter associated with $\theta_t(\phi)$.
	
\end{proof}

\begin{theorem}\label{thm:isomorphism.groupoid}
	Let $\lspace$ be a weakly left-resolving, normal labelled space. Then
	\[\newfd{\Phi}{\mathcal{G}_{tight}}{\Gamma}{[t,\phi]}{((G_{(\beta)\alpha'}\circ H_{[\gamma]\alpha'})(\xia),|\beta|-|\gamma|,\xia)}\]
	is a well defined isomorphism of groupoids where $\xia$ is the filter associated with $\phi$ and $t=(\beta,A,\gamma)$ is such that $(\gamma,A,\gamma)\in\xia$, so that $\alpha=\gamma\alpha'$ for some $\alpha'\in\awleinf$.
\end{theorem}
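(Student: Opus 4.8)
The plan is to exploit Lemma~\ref{lemma:cut.and.glue}, which identifies the first coordinate of $\Phi([t,\phi])$, namely $(G_{(\beta)\alpha'}\circ H_{[\gamma]\alpha'})(\xi^\alpha)$, with the filter associated to $\theta_t(\phi)$. With this reading, $\Phi$ sends a germ $[t,\phi]$, where $t=(\beta,A,\gamma)$, to the triple consisting of the filter of $\theta_t(\phi)$, the integer $|\beta|-|\gamma|$, and the filter of $\phi$; the bulk of the verification then becomes a matter of tracking words and lengths together with the two composition identities $G_{(\alpha)\beta}\circ H_{[\alpha]\beta}=\mathrm{id}$ and $H_{[\alpha]\beta}\circ G_{(\alpha)\beta}=\mathrm{id}$ coming from Theorem~\ref{thm.H-class.G-class.inverses}. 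I would organize the proof into three parts: $\Phi$ is well defined (the triple lies in $\Gamma$ and is independent of the chosen representative), $\Phi$ is a groupoid homomorphism, and $\Phi$ is a bijection.

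For well-definedness, write $\eta=(G_{(\beta)\alpha'}\circ H_{[\gamma]\alpha'})(\xi^\alpha)$, whose associated word is $\beta\alpha'$. To see that $(\eta,|\beta|-|\gamma|,\xi^\alpha)\in\Gamma$ I would check the defining equality of $\Gamma$, which here reads $H_{[\beta]\alpha'}(\eta)=H_{[\gamma]\alpha'}(\xi^\alpha)$: applying $H_{[\beta]\alpha'}$ to $\eta$ and using $H_{[\beta]\alpha'}\circ G_{(\beta)\alpha'}=\mathrm{id}$ collapses the gluing and leaves exactly $H_{[\gamma]\alpha'}(\xi^\alpha)$. Independence of the representative has two ingredients. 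First, if $(t,\phi)\sim(t',\phi)$ then $\theta_t(\phi)=\theta_{t'}(\phi)$: choosing $e$ with $\phi(e)=1$ and $te=t'e$, and noting $\theta_e(\phi)=\phi$ (since $E(S)$ is a meet semilattice and $\phi$ preserves meets), one gets $\theta_t(\phi)=\theta_{te}(\phi)=\theta_{t'e}(\phi)=\theta_{t'}(\phi)$, so the first coordinates agree by Lemma~\ref{lemma:cut.and.glue}; the third coordinates are both the filter of $\phi$. Second, Remark~\ref{remark.equivalence.relation.on.omega} shows that equivalent pairs have source and range words differing by a common suffix, so the integer $|\beta|-|\gamma|$ is unchanged.

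For the homomorphism property, the inverse is immediate: $[t,\phi]^{-1}=[t^*,\theta_t(\phi)]$ with $t^*=(\gamma,A,\beta)$, and applying $\Phi$ and simplifying $G_{(\gamma)\alpha'}\circ H_{[\beta]\alpha'}\circ G_{(\beta)\alpha'}\circ H_{[\gamma]\alpha'}$ to the identity (again by Theorem~\ref{thm.H-class.G-class.inverses}) returns $(\xi^\alpha,|\gamma|-|\beta|,\eta)=\Phi([t,\phi])^{-1}$. For products, take composable germs with $\phi=\theta_t(\psi)$; composability in $\Gamma$ holds because the third coordinate of $\Phi([s,\phi])$ and the first of $\Phi([t,\psi])$ are both the filter of $\phi=\theta_t(\psi)$. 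Using $\theta_{st}=\theta_s\circ\theta_t$ (a direct computation on characters), the first coordinate of $\Phi([st,\psi])$ equals the filter of $\theta_s(\phi)$, matching that of $\Phi([s,\phi])$, and both triples carry $\psi$ in the third coordinate. It then remains only to check additivity of the integer coordinate, for which I would split into the two cases of the product in $S$ (according to whether the source of $s$ is a beginning of the range of $t$ or vice versa) and compute the length difference directly; in both cases it equals $(|\beta_s|-|\gamma_s|)+(|\beta_t|-|\gamma_t|)$.

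Finally, for bijectivity: injectivity follows by noting that equality of the third coordinates forces $\phi=\psi$ and equality of the first coordinates forces equality of their words, $\beta_s\alpha'_s=\beta_t\alpha'_t$; combining this with equality of the integer coordinates and a short prefix comparison yields that the source and range words of $s$ and $t$ differ by a common suffix, which by Remark~\ref{remark.equivalence.relation.on.omega} gives $(s,\phi)\sim(t,\phi)$. For surjectivity, given $(\xi^{\alpha\gamma},|\alpha|-|\beta|,\eta^{\beta\gamma})\in\Gamma$ I would let $\phi$ be the character of $\eta^{\beta\gamma}$ and seek $t=(\alpha,A,\beta)$; the $\Gamma$-condition $H_{[\alpha]\gamma}(\xi^{\alpha\gamma})=H_{[\beta]\gamma}(\eta^{\beta\gamma})$ together with $G_{(\alpha)\gamma}\circ H_{[\alpha]\gamma}=\mathrm{id}$ then guarantees, via Lemma~\ref{lemma:cut.and.glue}, that the first coordinate of $\Phi([t,\phi])$ is $\xi^{\alpha\gamma}$. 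The one point needing care is the existence of an admissible $A$: since the common cut filter lies in both $\ftightw{(\alpha)\gamma}$ and $\ftightw{(\beta)\gamma}$, its level-$0$ filter contains $r(\alpha)\cap r(\beta)$, so the ultrafilter $\eta^{\beta\gamma}_{|\beta|}$ contains some $A\subseteq r(\alpha)\cap r(\beta)$; such an $A$ lies in $\acfrg{\alpha}\cap\acfrg{\beta}$ and in $\eta^{\beta\gamma}_{|\beta|}$, making $t=(\alpha,A,\beta)$ a valid element of $S$ with $(t,\phi)\in\Omega$. I expect the product-preservation bookkeeping and this last existence argument to be the most delicate steps; the remainder is a fairly mechanical application of Theorem~\ref{thm.H-class.G-class.inverses} and Lemma~\ref{lemma:cut.and.glue}.
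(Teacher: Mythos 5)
Your proposal is correct and follows the same overall decomposition as the paper's proof (well-definedness, injectivity, surjectivity, multiplicativity of $\Phi$), resting on the same ingredients: Lemma \ref{lemma:cut.and.glue}, Theorems \ref{thm.H-class.G-class.inverses} and \ref{thm.compositions.G.and.H}, and Proposition \ref{prop:equivalence.relation.on.omega} together with Remark \ref{remark.equivalence.relation.on.omega}. The one genuine difference is how you discharge the two computational steps (independence of the representative, and equality of the first coordinates when comparing $\Phi[st,\psi]$ with $\Phi[s,\theta_t(\psi)]\Phi[t,\psi]$): the paper expands $(G_{(\beta)\alpha'}\circ H_{[\gamma]\alpha'})(\xia)$ explicitly and cancels factors using $H_{[\nu]\gamma'\alpha'}\circ G_{(\nu)\gamma'\alpha'}=\mathrm{id}$, whereas you route everything through Lemma \ref{lemma:cut.and.glue} plus the algebraic identities $\theta_{te}=\theta_t$ on $D_e$ and $\theta_{st}=\theta_s\circ\theta_t$, reducing both verifications to the statement that the first coordinate of $\Phi[t,\phi]$ is the filter of $\theta_t(\phi)$. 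This is a legitimate shortcut (those identities are built into Exel's construction of the action and are immediate to check on characters), and it leaves only the integer bookkeeping, which you then do case-by-case exactly as the paper does. In the surjectivity step you also replace the paper's citation of an external nonemptiness result for $r(\beta)\cap r(\gamma)$ by a self-contained argument from the fact that the common cut filter lies in $\ftightw{(\alpha)\gamma}\cap\ftightw{(\beta)\gamma}$; this works, but you should spell out the degenerate cases where one of the words is $\eword$ (there $r(\eword)=\dgraph^0$ need not belong to $\acf$, which is why the paper treats $\gamma=\eword$ and $\gamma=\alpha'=\eword$ separately).
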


\begin{proof}
	We divide the proof in several steps.
	
	\bigskip 
	
	\noindent $\bullet$ $\Phi$ is well defined:
	
	\bigskip 
	
	For $t$ and $\phi$ as in the statement, $(G_{(\beta)\alpha'}\circ H_{[\gamma]\alpha'})(\xia)$ is well defined by Lemma \ref{lemma:cut.and.glue}. Also, by Theorem \ref{thm.H-class.G-class.inverses},
	\[ (H_{[\beta]\alpha'}\circ G_{(\beta)\alpha'}\circ H_{[\gamma]\alpha'})(\xia) = H_{[\gamma]\alpha'}(\xia) \]
	so that $((G_{(\beta)\alpha'}\circ H_{[\gamma]\alpha'})(\xia),|\beta|-|\gamma|,\xia)\in\Gamma$.
	
	Now, let $s=(\mu,B,\nu)\in S\lspace$ be such that $\phi\in D_{s^*s}$ and $[s,\phi]=[t,\phi]$. By Remark \ref{remark.equivalence.relation.on.omega}, it is sufficient to suppose that $\gamma=\nu\gamma'$ and $\beta=\mu\gamma'$  for some $\gamma'\in\awstar$. Then
	\[ |\beta|-|\gamma| = |\mu\gamma'|-|\nu\gamma'|=|\mu|-|\nu| \]
	and, by Theorems \ref{thm.H-class.G-class.inverses} and \ref{thm.compositions.G.and.H},
	\dmal{(G_{(\beta)\alpha'}\circ H_{[\gamma]\alpha'})(\xia) & = (G_{(\mu)\gamma'\alpha'}\circ G_{(\gamma')\alpha'}\circ H_{[\gamma']\alpha'}\circ H_{[\nu]\gamma'\alpha'})(\xia) \\
		& = (G_{(\mu)\gamma'\alpha'}\circ H_{[\nu]\gamma'\alpha'})(\xia) }
	so that $\Phi$ does not depend on the representative.
	
	\bigskip 
	
	\noindent $\bullet$ $\Phi$ is injective:
	
	\bigskip 
	
	Suppose that $\Phi[t,\phi]=\Phi[s,\psi]$, where $t=(\beta,A,\gamma)$ and $s=(\mu,B,\nu)$. Since the third entries of $\Phi[t,\phi]$ and $\Phi[s,\psi]$ are the filters associated with $\phi$ and $\psi$ respectively, for the equality  $\Phi[t,\phi]=\Phi[s,\psi]$ to be true it is necessary that $\phi=\psi$. Let $\xia$ be the filter associated with $\phi$. By Remark \ref{remark.element.of.omega}, $\gamma$ and $\nu$ are beginnings of $\alpha$ so that they are comparable. Without loss of generality, suppose that $\gamma=\nu\gamma'$ for some $\gamma'\in\awstar$.
	
	By the definitions of $G$ and $H$, the first entries of $\Phi[t,\phi]$ and $\Phi[s,\psi]$ are filters with associated words $\beta\alpha'$ and $\mu\gamma'\alpha'$ respectively. From $\Phi[t,\phi]=\Phi[s,\psi]$, it follows that $\beta\alpha'=\mu\gamma'\alpha'$ and hence $\beta=\mu\gamma'$. By Proposition \ref{prop:equivalence.relation.on.omega}, $[t,\phi]=[s,\phi]$ and $\Phi$ is injective.
	
	\bigskip 
	
	\noindent $\bullet$ $\Phi$ is surjective:
	
	\bigskip 
	
	If $(\eta^{\tilde{\alpha}},m,\xia)\in\Gamma$ then there exists $\beta,\gamma\in\awstar$ and $\alpha'\in\awleinf$ such that $\tilde{\alpha}=\beta\alpha'$, $\alpha=\gamma\alpha'$,  $m=|\beta|-|\gamma|$ and $H_{[\gamma]\alpha'}(\xia)=H_{[\beta]\alpha'}(\eta^{\tilde{\alpha}})$. Let $\phi$ be the filter associated with $\xia$. Suppose that $\gamma\neq\eword$ and define $t=(\beta,r(\beta)\cap r(\gamma),\gamma)$. Since $H_{[\gamma]\alpha'}(\xia)=H_{[\beta]\alpha'}(\eta^{\tilde{\alpha}})$, by \cite[Proposition 4.6]{MR3680957} we have that $r(\beta)\cap r(\gamma)\neq\emptyset$. It follows from Theorem \ref{thm.H-class.G-class.inverses} that
	\dmal{ \Phi[t,\phi] & =((G_{(\beta)\alpha'}\circ H_{[\gamma]\alpha'})(\xia),|\beta|-|\gamma|,\xia) \\
		& = ((G_{(\beta)\alpha'}\circ H_{[\beta]\alpha'})(\eta^{\tilde{\alpha}}),|\beta|-|\gamma|,\xia) \\
		& = (\eta^{\tilde{\alpha}},m,\xia). }

	If $\gamma=\eword$ and $\alpha'\neq\eword$, we can use Theorem \ref{thm.compositions.G.and.H} to define $\gamma'=\alpha'_1$, $\beta'=\beta\alpha'_1$ and $\alpha''=\alpha'_{2,|\alpha'|}$, and repeat the above argument. If $\gamma=\alpha'=\eword$, then there exists $A\in\xi_0^{\alpha}$ and we then take $t=(\beta,r(\beta)\cap A,\gamma)$ and again show that $\Phi[t,\phi]=(\eta^{\tilde{\alpha}},m,\xia)$.
	
	\bigskip

	\noindent $\bullet$ $(\Phi\times\Phi)\left(\mathcal{G}_{tight}^{(2)}\right)\subseteq\Gamma^{(2)}$ and $\Phi$ preserves multiplication:
	
	\bigskip 
	
	Let $\left([s,\theta_t(\phi)],[t,\phi]\right)\in\mathcal{G}_{tight}^{(2)}$ with $s=(\mu,B,\nu)$, $t=(\beta,A,\gamma)$ and $\xia$ be the filter associated with $\phi$. Also, let $\eta^{\beta\alpha'}$ be the filter associated with $\theta_t(\phi)$ as in Lemma \ref{lemma:cut.and.glue}. It follows from the definition of $\Omega$ that $st\neq 0$, and so from the definition of the product given in Subsection \ref{subsection:inverse.semigroup} we have two cases to consider.
	
	\emph{Case 1:} $\beta=\nu\beta'$ for some $\beta'\in\awstar$. In this case $\beta\alpha'=\nu\beta'\alpha'$ and
	\[ st = (\mu\beta',r(B,\beta')\cap A, \gamma). \]
	
	On the one hand
	\[ \Phi[st,\phi] = ((G_{(\mu\beta')\alpha'}\circ H_{[\gamma]\alpha'})(\xia),|\mu\beta'|-|\gamma|,\xia). \]
	
	On the other hand
	\[ \Phi[s,\theta_t(\phi)] = ((G_{(\mu)\beta'\alpha'}\circ H_{[\nu]\beta'\alpha'})(\eta^{\beta\alpha'}),|\mu|-|\nu|,\eta^{\beta\alpha'}) \]
	and, by Lemma \ref{lemma:cut.and.glue},
	\[ \Phi[t,\phi] = ((G_{(\beta)\alpha'}\circ H_{[\gamma]\alpha'})(\xia),|\beta|-|\gamma|,\xia) = (\eta^{\beta\alpha'},|\beta|-|\gamma|,\xia),\]
	which implies that $(\Phi[s,\theta_t(\phi)],\Phi[t,\phi])\in \Gamma^{(2)}$.
	
	Multiplying, we obtain
	\[ \Phi[s,\theta_t(\phi)]\Phi[t,\phi] = ((G_{(\mu)\beta'\alpha'}\circ H_{[\nu]\beta'\alpha'})(\eta^{\beta\alpha'}),|\mu|-|\nu|+|\beta|-|\gamma|,\xia) .\]

	Since $\beta=\nu\beta'$,
	\[ |\mu|-|\nu|+|\beta|-|\gamma| = |\mu|-|\nu|+|\nu\beta'|-|\gamma|=|\mu\beta'|-|\gamma|. \]
	
	Also, from theorems \ref{thm.H-class.G-class.inverses} and \ref{thm.compositions.G.and.H},
	\dmal{(G_{(\mu)\beta'\alpha'}\circ H_{[\nu]\beta'\alpha'})(\eta^{\beta\alpha'})
		& = (G_{(\mu)\beta'\alpha'}\circ H_{[\nu]\beta'\alpha'}\circ G_{(\beta)\alpha'}\circ H_{[\gamma]\alpha'})(\xia) \\
		& = (G_{(\mu)\beta'\alpha'}\circ H_{[\nu]\beta'\alpha'}\circ G_{(\nu)\beta'\alpha'} \circ G_{(\beta')\alpha'}\circ H_{[\gamma]\alpha'})(\xia) \\
		& = (G_{(\mu)\beta'\alpha'} \circ G_{(\beta')\alpha'}\circ H_{[\gamma]\alpha'})(\xia) \\
		& = (G_{(\mu\beta')\alpha'} \circ H_{[\gamma]\alpha'})(\xia).
	}
	
	It follows that
	\[ \Phi[st,\phi] = \Phi[s,\theta_t(\phi)]\Phi[t,\phi]. \]
	
	\emph{Case 2:} $\nu=\beta\nu'$ for some $\nu'\in\awstar$. Since $(s,\theta_t(\phi))\in\Omega$, $\nu$ is a beginning of $\beta\alpha'$ and in this case $\alpha'=\nu'\alpha''$ for some $\alpha''\in\awleinf$. Also, we have that
	\[ st = (\mu,B\cap r(A,\nu'),\gamma\nu'). \]
	
	Now,
	\[ \Phi[st,\phi] = ( (G_{(\mu)\alpha''}\circ H_{[\gamma\nu']\alpha''})(\xia),|\mu| -|\gamma\nu'| ,\xia).\]
	
	On the other hand
	\[ \Phi[s,\theta_t(\phi)] = ((G_{(\mu)\alpha''}\circ H_{[\nu]\alpha''})(\eta^{\beta\alpha'}),|\mu|-|\nu|,\eta^{\beta\alpha'}) \]
	and
	\[ \Phi[t,\phi] = ((G_{(\beta)\alpha'}\circ H_{[\gamma]\alpha'})(\xia),|\beta|-|\gamma|,\xia) = (\eta^{\beta\alpha'},|\beta|-|\gamma|,\xia),\]	
	which again implies that $(\Phi[s,\theta_t(\phi)],\Phi[t,\phi])\in \Gamma^{(2)}$.
	
	Multiplying, we obtain
	\[ \Phi[s,\theta_t(\phi)]\Phi[t,\phi] = ((G_{(\mu)\alpha''}\circ H_{[\nu]\alpha''})(\eta^{\beta\alpha'}),|\mu|-|\nu|+|\beta|-|\gamma|,\xia) .\]
	
	Since $\nu=\beta\nu'$,
	\[ |\mu|-|\nu|+|\beta|-|\gamma| = |\mu|-|\beta\nu'|+|\beta|-|\gamma| = |\mu|-|\gamma\nu'|. \]
	
	From theorems \ref{thm.H-class.G-class.inverses} and \ref{thm.compositions.G.and.H},
	\dmal{(G_{(\mu)\alpha''}\circ H_{[\nu]\alpha''})(\eta^{\beta\alpha'})
		& = (G_{(\mu)\alpha''}\circ H_{[\nu]\alpha''} \circ G_{(\beta)\alpha'}\circ H_{[\gamma]\alpha'})(\xia) \\
		& = (G_{(\mu)\alpha''}\circ H_{[\nu']\alpha''} \circ H_{[\beta]\alpha'} \circ G_{(\beta)\alpha'}\circ H_{[\gamma]\alpha'})(\xia) \\
		& = (G_{(\mu)\alpha''}\circ H_{[\nu']\alpha''} \circ  H_{[\gamma]\alpha'})(\xia) \\
		& = (G_{(\mu)\alpha''}\circ H_{[\gamma\nu']\alpha''})(\xia).
	}
	
	It follows that
	\[ \Phi[st,\phi] = \Phi[s,\theta_t(\phi)]\Phi[t,\phi]. \]

\end{proof}

\section{Topological considerations}

Our goal in this section is to describe the topology on $\Gamma$ induced by the isomorphism given in Theorem \ref{thm:isomorphism.groupoid} from the topology of $\mathcal{G}_{tight}$ given in \cite{MR2419901}. First we recall a basis of compact-open sets for $\filt$ given in \cite{MR2974110}. For $e\in E(S)$ define
\[ U_e=\{\xi\in\filt\ |\ e\in\xi\} \]
and, if we are also given a finite (possibly empty) set $\{e_1,\ldots,e_n\}$, define
\[ U_{e:e_1,\ldots,e_n}= U_e\cap U_{e_1}^c\cap\cdots\cap U_{e_n}^c.\]

As observed in \cite{MR2974110},
\[ U_{e:e_1,\ldots,e_n}= U_e\cap U_{e_1e}^c\cap\cdots\cap U_{e_ne}^c\]
so that we may assume that $e_i\leq e$ for all $i\in\{1,\ldots,n\}$.

\begin{proposition}[\cite{MR2974110}, Lemmas 2.22 and 2.23]
	The sets $U_{e:e_1,\ldots,e_n}$ form a basis of compact-open sets for $\filt$ such that the resulting topology is Hausdorff.
\end{proposition}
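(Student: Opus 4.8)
The plan is to realize $\filt$ as a subspace of the Cantor cube $\{0,1\}^{E(S)}$ and to read off all three assertions from the topology of that cube. Identify a filter $\xi$ with its character $\phi_\xi$, where $\phi_\xi(e)=1$ exactly when $e\in\xi$; this identifies $\filt$ with the set $\hat{E}_0$ of nonzero, meet- and $0$-preserving maps $E(S)\to\{0,1\}$, and the topology of pointwise convergence of characters becomes the subspace topology inherited from $\{0,1\}^{E(S)}$ (with $\{0,1\}$ discrete). Under this identification $U_e=\{\phi\ |\ \phi(e)=1\}$ and $U_{e:e_1,\ldots,e_n}=\{\phi\ |\ \phi(e)=1,\ \phi(e_1)=\cdots=\phi(e_n)=0\}$. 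Since each evaluation $\phi\mapsto\phi(e)$ is continuous into the discrete set $\{0,1\}$, the sets $U_e$ and the complements $U_{e_i}^c$ are clopen, so every $U_{e:e_1,\ldots,e_n}$ is open.

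Next I would verify the basis property. The key algebraic input is that a character preserves meets, so $\phi(e)=\phi(f)=1$ iff $\phi(e\wedge f)=1$; consequently finitely many constraints of the form $\phi(\,\cdot\,)=1$ collapse to a single one, and a direct computation gives $U_{e:e_1,\ldots,e_n}\cap U_{f:f_1,\ldots,f_m}=U_{e\wedge f:e_1,\ldots,e_n,f_1,\ldots,f_m}$ (empty when $e\wedge f=0$), so the family is closed under finite intersection. The sets $\{\phi\ |\ \phi(e)=1\}$ together with $\{\phi\ |\ \phi(e)=0\}$ form a subbasis for the product topology on $\filt$; given a basic product-open set $W$ and $\phi\in W$, I would take $e$ to be the meet of the generators appearing with value $1$ (or, if there are none, any $e$ with $\phi(e)=1$, which exists because $\phi$ is nonzero) and let $e_1,\ldots,e_n$ be the generators appearing with value $0$. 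A short check using meet-preservation shows $\phi\in U_{e:e_1,\ldots,e_n}\subseteq W$, establishing that the $U_{e:e_1,\ldots,e_n}$ form a basis.

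Hausdorffness is then immediate, since $\{0,1\}^{E(S)}$ is a product of Hausdorff spaces and any subspace of a Hausdorff space is Hausdorff. Concretely, if $\phi\neq\psi$ there is some $e$ with, say, $\phi(e)=1$ and $\psi(e)=0$; choosing $f$ with $\psi(f)=1$ separates them by the disjoint basic sets $U_e$ and $U_{f:e}$.

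The main obstacle is compactness, precisely because $\hat{E}_0$ is \emph{not} closed in $\{0,1\}^{E(S)}$: the constant map $0$ is a limit of nonzero characters, so one cannot simply invoke compactness of the ambient cube for $\filt$ itself. The remedy is to argue at the level of a single basic set. I would observe that, as a subset of $\{0,1\}^{E(S)}$, the set $U_{e:e_1,\ldots,e_n}$ is cut out by the conditions $\phi(0)=0$, $\phi(a\wedge b)=\phi(a)\phi(b)$ for all $a,b\in E(S)$, $\phi(e)=1$, and $\phi(e_i)=0$ for $1\leq i\leq n$, each of which is a closed condition (a finite intersection of preimages of points under continuous evaluations). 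Hence $U_{e:e_1,\ldots,e_n}$ is closed in the compact space $\{0,1\}^{E(S)}$ and therefore compact. Crucially, the constraint $\phi(e)=1$ (with $e\neq 0$) forces $\phi$ to be nonzero, so this closed set is automatically contained in $\hat{E}_0=\filt$ and steers clear of the excluded zero map; this is exactly the point where the ``$=1$'' coordinate does double duty, both carving out a closed set and excluding the one bad character.
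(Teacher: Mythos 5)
Your proof is correct: the identification of $\filt$ with the character space inside $\{0,1\}^{E(S)}$, the collapse of finitely many value-$1$ constraints via meet-preservation, and the observation that the condition $\phi(e)=1$ both closes the set in the compact cube and excludes the zero character are all sound, and they deliver openness, the basis property, Hausdorffness, and compactness. The paper itself offers no proof here --- it imports the statement from \cite{MR2974110} --- and your argument is essentially the standard one behind the cited lemmas, so there is nothing to compare beyond noting that you have supplied the details the paper omits.
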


This is the topology induced by the topology of pointwise convergence on characters, given the bijection between filters and characters.

\begin{remark}\label{remark:basis.tight}
	Since $\ftight$ is a closed subset of $\filt$, the sets
	\[  V_{e:e_1,\ldots,e_n} =  U_{e:e_1,\ldots,e_n}\cap\ftight\]
	as above form a basis of compact-open sets for the relative topology on $\ftight$. As a particular case, we denote $V_e=U_e\cap\ftight$. 
\end{remark}

Now we recall the topology on $\mathcal{G}_{tight}$ given in \cite{MR2419901}. Given $s\in S$ and an open set $\mathcal{U}\subseteq D_{s^*s}$, let
\[ \Theta(s,\mathcal{U})=\{[s,\phi]\in\mathcal{G}_{tight}\ |\ \phi\in\mathcal{U}\}. \]
Then, the collections of all $\Theta(s,\mathcal{U})$ is basis for a topology on $\mathcal{G}_{tight}$ making it a topological étale groupoid.

Finally we define a collection of subsets on $\Gamma$ as follows. Given $s=(\mu,A,\nu)\in S\lspace$ and $e_1,\ldots,e_n\in E(S)$, let
\[ Z_{s,e:e_1,\ldots,e_n}=\{(\eta^{\mu\gamma},|\mu|-|\nu|,\xi^{\nu\gamma})\in\Gamma\ |\ \xi\in V_{e:e_1,\ldots,e_n}\text{ and }H_{[\mu]\gamma}(\eta)=H_{[\nu]\gamma}(\xi) \}. \]

\begin{remark}
	In the above definition, if $e=s^*s$, we denote it by $Z_{s:e_1,\ldots,e_n}$. We also allow $n$ to be zero and in this case we denote it simply by $Z_{s,e}$ or $Z_s$ when $e=s^*s$.
\end{remark}

\begin{proposition}\label{prop:topology.groupoid}
	The sets $Z_{s,e:e_1,\ldots,e_n}$ defined above form a basis of compact-open sets for the topology on $\Gamma$ induced by the map $\Phi$ of Theorem \ref{thm:isomorphism.groupoid} from the topology on $\mathcal{G}_{tight}$ given by the sets $\Theta(s,\mathcal{U})$.
\end{proposition}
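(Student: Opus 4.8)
The plan is to exploit that the topology on $\Gamma$ is, by definition, the one making $\Phi$ a homeomorphism, so that a basis for $\Gamma$ is obtained by pushing any basis of $\mathcal{G}_{tight}$ forward along $\Phi$. Since the sets $\Theta(s,\mathcal{U})$ with $s\in S$ and $\mathcal{U}\subseteq D_{s^*s}$ open form a basis of $\mathcal{G}_{tight}$, and the sets $V_{e:e_1,\ldots,e_n}$ form a basis of $\ftight$ (Remark~\ref{remark:basis.tight}), I would first reduce to the basic open sets $\Theta(s,V_{e:e_1,\ldots,e_n})$. Writing an arbitrary open $\mathcal{U}\subseteq D_{s^*s}$ as a union of sets $V_{e:e_1,\ldots,e_n}\subseteq D_{s^*s}$ and using $\Theta(s,\bigcup_i\mathcal{U}_i)=\bigcup_i\Theta(s,\mathcal{U}_i)$ shows these already form a basis. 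Moreover, intersecting with $D_{s^*s}=V_{s^*s}$ and using $V_{e:e_1,\ldots,e_n}\cap V_{s^*s}=V_{es^*s:e_1,\ldots,e_n}$ (cf.\ the identities recorded after the definition of $U_{e:e_1,\ldots,e_n}$, together with the reduction $e_i\le e$), I may assume throughout that $e\le s^*s$, so that $V_{e:e_1,\ldots,e_n}\subseteq D_{s^*s}$.

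The heart of the argument is then the identity
\[\Phi\big(\Theta(s,V_{e:e_1,\ldots,e_n})\big)=Z_{s,e:e_1,\ldots,e_n}\qquad(e\le s^*s),\]
which I would prove by unwinding the definition of $\Phi$. Write $s=(\mu,A,\nu)$. For $\phi\in V_{e:e_1,\ldots,e_n}\subseteq D_{s^*s}$ with associated filter $\xi$, Remark~\ref{remark.element.of.omega} gives that $\nu$ is a beginning of the word of $\xi$, say $\xi=\xi^{\nu\gamma}$; then $\Phi[s,\phi]=(\eta,|\mu|-|\nu|,\xi^{\nu\gamma})$ with $\eta=(G_{(\mu)\gamma}\circ H_{[\nu]\gamma})(\xi)$, and Theorem~\ref{thm.H-class.G-class.inverses} yields $H_{[\mu]\gamma}(\eta)=H_{[\nu]\gamma}(\xi)$, so $\Phi[s,\phi]\in Z_{s,e:e_1,\ldots,e_n}$. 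Conversely, given $(\eta^{\mu\gamma},|\mu|-|\nu|,\xi^{\nu\gamma})\in Z_{s,e:e_1,\ldots,e_n}$, the hypothesis $e\le s^*s$ forces $\xi\in V_{e:e_1,\ldots,e_n}\subseteq D_{s^*s}$, so $(s,\phi)\in\Omega$ for the character $\phi$ of $\xi$; applying $G_{(\mu)\gamma}$ to $H_{[\mu]\gamma}(\eta)=H_{[\nu]\gamma}(\xi)$ and invoking Theorem~\ref{thm.H-class.G-class.inverses} gives $\eta=(G_{(\mu)\gamma}\circ H_{[\nu]\gamma})(\xi)$, whence the triple equals $\Phi[s,\phi]$. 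The degenerate cases $\gamma=\eword$ are handled exactly as in the proof of Theorem~\ref{thm:isomorphism.groupoid}.

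Finally, I would record that each $\Theta(s,V_{e:e_1,\ldots,e_n})$ is compact-open in $\mathcal{G}_{tight}$: it is open by the definition of the topology on $\mathcal{G}_{tight}$, and it is compact as the continuous image of the compact set $V_{e:e_1,\ldots,e_n}$ (Remark~\ref{remark:basis.tight}) under the map $\phi\mapsto[s,\phi]$ from $D_{s^*s}$ into $\mathcal{G}_{tight}$ coming from the \'etale structure in \cite{MR2419901}. Since $\Phi$ is a homeomorphism for the induced topology, the sets $Z_{s,e:e_1,\ldots,e_n}=\Phi(\Theta(s,V_{e:e_1,\ldots,e_n}))$ are compact-open and form a basis, as claimed. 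I expect the main obstacle to be the bookkeeping in the key identity: verifying that the defining relation $H_{[\mu]\gamma}(\eta)=H_{[\nu]\gamma}(\xi)$ of $Z_{s,e:e_1,\ldots,e_n}$ is precisely what the inversion in Theorem~\ref{thm.H-class.G-class.inverses} converts into the composite $G_{(\mu)\gamma}\circ H_{[\nu]\gamma}$ appearing in $\Phi$, and checking that the reduction $e\le s^*s$ is exactly what guarantees that the filters occurring in $Z_{s,e:e_1,\ldots,e_n}$ lie in $D_{s^*s}$, so that the converse inclusion goes through.
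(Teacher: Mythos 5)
Your proposal is correct and follows essentially the same route as the paper: refine the basis $\Theta(s,\mathcal{U})$ to the sets $\Theta(s,\mathcal{U}_{e:e_1,\ldots,e_n})$ coming from the compact-open basis $V_{e:e_1,\ldots,e_n}$ of $\ftight$, identify $\Phi(\Theta(s,\mathcal{U}_{e:e_1,\ldots,e_n}))=Z_{s,e:e_1,\ldots,e_n}$ via Theorem \ref{thm.H-class.G-class.inverses}, and deduce compactness from that of $V_{e:e_1,\ldots,e_n}$. Your explicit reduction to $e\leq s^*s$ is a slightly more careful bookkeeping of what the paper leaves implicit, but the argument is the same.
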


\begin{proof}
	Given an open set $\mathcal{U}\subseteq D_{s^*s}$ for some $s\in S\lspace$, the corresponding set of filters $V$ is an open set of $\ftight$. Let $[s,\phi]\in\Theta(s,\mathcal{U})$ with $\phi\in\mathcal{U}$ and associated filter $\xi\in V$ be given. Then there exists $e,e_1,\ldots,e_n$ such that $\xi\in V_{e:e_1,\ldots,e_n}\subseteq V$. Let $\mathcal{U}_{e:e_1,\ldots,e_n}$ be the corresponding open set on $D_{s^*s}$ so that $\phi\in\mathcal{U}_{e:e_1,\ldots,e_n}\subseteq\mathcal{U}$ and $[s,\phi]\in\Theta(s,\mathcal{U}_{e:e_1,\ldots,e_n})\subseteq\Theta(s,\mathcal{U})$.
	
	It follows that the family of all sets $\Theta(s,\mathcal{U}_{e:e_1,\ldots,e_n})$ is also a basis for $\mathcal{G}_{tight}$ where $s\in S\lspace$ and $\mathcal{U}_{e:e_1,\ldots,e_n}$ is the corresponding set of characters associated with the elements of $V_{e:e_1,\ldots,e_n}$.
	
	By Theorem \ref{thm.H-class.G-class.inverses}, $H_{[\mu]\gamma}(\eta)=H_{[\nu]\gamma}(\xi)$ if and only if $\eta=(G_{(\mu)\gamma} \circ H_{[\nu]\gamma})(\xi)$. By the definition of the isomorphism $\Phi$ of Theorem \ref{thm:isomorphism.groupoid}, we have that
	\[ \Phi(\Theta(s,\mathcal{U}_{e:e_1,\ldots,e_n}))=Z_{s,e:e_1,\ldots,e_n} \]
	and then it is easy to see that the sets $Z_{s,e:e_1,\ldots,e_n}$ form a basis for the induced topology by $\Phi$ on $\Gamma$.
	
	To see that they are compact, we use Proposition 4.15 of \cite{MR2419901}. We have the homeomorphisms
	\[ Z_{s,e:e_1,\ldots,e_n}\cong \Theta(s,\mathcal{U}_{e:e_1,\ldots,e_n})\cong \mathcal{U}_{e:e_1,\ldots,e_n} \cong V_{e:e_1,\ldots,e_n}, \]
	where the last is compact by Remark \ref{remark:basis.tight}.
\end{proof}

Let us now check that, with the above topology, $\Gamma$ is Hausdorff. This follows from an algebraic property of the semigroup as studied in \cite{MR2419901}.

\begin{definition}
	An inverse semigroup $S$ with zero is said to be $E^*$-unitary if for every $s\in S$ such that $e=se$ for some $e\in E(S)\setminus\{0\}$, we have that $s\in E(S)$.
\end{definition}

\begin{proposition}
	Let $\lspace$ be a weakly left-resolving labelled space, then $S\lspace$ is a $E^*$-unitary inverse semigroup.
\end{proposition}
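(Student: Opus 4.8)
The plan is to verify the $E^*$-unitary condition directly against the concrete product formula for $S\lspace$ given in Subsection~\ref{subsection:inverse.semigroup}. So I would take a nonzero $s=(\mu,A,\nu)\in S$ and a nonzero idempotent $e=(\delta,N,\delta)\in E(S)$ with $e=se$, and show that this forces $\mu=\nu$, whence $s=(\mu,A,\mu)\in E(S)$. The point is that the product $se$ is computed by one of the two nonzero branches in the definition of the operation, and in either branch the hypothesis $e=se$ pins down the first and third coordinates of $s$ to be equal.

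First I would record what $se$ is. Writing $s\cdot e$ with $s=(\mu,A,\nu)$ and $e=(\delta,N,\delta)$, the product is nonzero only if $\delta$ and $\nu$ are comparable. If $\delta=\nu\delta'$ then $se=(\mu\delta',\,r(A,\delta')\cap N,\,\delta)$, and if $\nu=\delta\nu'$ then $se=(\mu,\,A\cap r(N,\nu'),\,\delta\nu')$. In each case I would equate this triple, coordinate by coordinate, with $e=(\delta,N,\delta)$, using that the third coordinate of $se$ must equal $\delta$.

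In the first case, $\delta=\nu\delta'$, comparing third coordinates gives $\delta=\delta$ automatically, but comparing \emph{first} coordinates gives $\mu\delta'=\delta=\nu\delta'$, so $\mu=\nu$ as words; hence $s=(\mu,A,\mu)\in E(S)$. In the second case, $\nu=\delta\nu'$, comparing third coordinates gives $\delta\nu'=\delta$, forcing $\nu'=\eword$ and thus $\nu=\delta$; then $se=(\mu,A\cap r(N,\eword),\delta)=(\mu,A\cap N,\nu)$, and comparing first coordinates with $e=(\delta,N,\delta)=(\nu,N,\nu)$ yields $\mu=\nu$ again. Either way $s\in E(S)$, which is exactly the $E^*$-unitary condition.

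The computation is entirely routine once the two branches are set out; the only thing to watch is the bookkeeping of which coordinate equality does the work in each case, and the degenerate possibility $\delta'=\eword$ or $\nu'=\eword$. I do not expect any genuine obstacle here: the key structural fact is simply that, in the product formula, the letters appended to $\mu$ (in the first branch) or trimmed from $\nu$ (in the second) are matched by the constraint $e=se$, forcing $\mu$ and $\nu$ to coincide. It is worth noting that weak left-resolution is not actually needed for this algebraic statement; it is inherited from the standing hypothesis only to guarantee that $S\lspace$ is an inverse semigroup in the first place.
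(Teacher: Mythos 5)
Your proof is correct and follows essentially the same route as the paper: a direct case analysis of the product formula for $se$, concluding that $e=se$ forces $\mu=\nu$ (the paper merely compresses your two branches into the single observation that $\delta=\nu\delta'$ and $\delta=\mu\delta'$ must both hold). Your remark that weak left-resolving is only needed to ensure $S\lspace$ is an inverse semigroup is also consistent with the paper.
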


\begin{proof}
	Let $s=(\mu,B,\nu)$ and $e=(\delta,D,\delta)\in E(S)\setminus\{0\}$. Suppose that $e=se$, that is
	\[ (\delta,D,\delta)=(\mu,B,\nu)(\delta,D,\delta). \]
	By the definition of the product on $S$, for this equality to be true it is necessary that $s\neq 0$, $\delta=\nu\delta'$ and $\delta=\mu\delta'$ for some $\delta'\in\awstar$. If that is the case then $\mu=\nu$, that is, $s=(\mu,B,\mu)\in E(S)$.
\end{proof}

\begin{corollary}
	$\Gamma$ with the above topology is Hausdorff.
\end{corollary}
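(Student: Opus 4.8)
The plan is to deduce the corollary from the general theory in Exel's work \cite{MR2419901}, which relates the Hausdorffness of the tight groupoid $\mathcal{G}_{tight}$ to an algebraic property of the underlying inverse semigroup. Specifically, I would invoke the standard fact that if $S$ is an $E^*$-unitary inverse semigroup, then the associated tight groupoid $\mathcal{G}_{tight}$ is Hausdorff. Since the preceding proposition establishes that $S\lspace$ is $E^*$-unitary, this immediately yields that $\mathcal{G}_{tight}$ is Hausdorff as a topological groupoid.

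The second and final step is to transfer this property across the isomorphism $\Phi$ of Theorem \ref{thm:isomorphism.groupoid}. By Proposition \ref{prop:topology.groupoid}, the topology placed on $\Gamma$ is precisely the one induced by $\Phi$ from the topology on $\mathcal{G}_{tight}$; in other words, $\Phi$ is a homeomorphism (indeed an isomorphism of topological groupoids) between $\mathcal{G}_{tight}$ and $\Gamma$. Since being Hausdorff is a topological property preserved under homeomorphisms, and $\mathcal{G}_{tight}$ is Hausdorff, it follows at once that $\Gamma$ is Hausdorff.

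The only substantive point to be careful about is pinning down the precise statement in \cite{MR2419901} connecting the $E^*$-unitary condition to Hausdorffness, and confirming that our setting (with uncountable alphabets, hence not necessarily second countable) still falls within its scope. I expect this to be the main, though minor, obstacle: one should cite the correct result (the characterization of Hausdorffness of the groupoid of germs in terms of the semigroup being $E^*$-unitary, or the weaker sufficient condition thereof) rather than reprove it. Assuming that citation is in force, the argument is essentially a one-line composition: $E^*$-unitary $\Rightarrow$ $\mathcal{G}_{tight}$ Hausdorff $\Rightarrow$ $\Gamma \cong \mathcal{G}_{tight}$ Hausdorff. No calculation is required beyond invoking these three facts in sequence.
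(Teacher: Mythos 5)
Your proposal is correct and follows exactly the paper's route: the preceding proposition gives that $S\lspace$ is $E^*$-unitary, the paper then cites Corollary 10.9 of \cite{MR2419901} for Hausdorffness of $\mathcal{G}_{tight}$, and the transfer to $\Gamma$ via the homeomorphism $\Phi$ is implicit since the topology on $\Gamma$ is by definition the induced one. No differences worth noting.
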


\begin{proof}
	This follows immediately from Corollary 10.9 of \cite{MR2419901}.
\end{proof}	

%

%

Next, we show that $\Gamma$ can be seen as a Renault-Deaconu groupoid \cite{MR584266,MR1233967,MR1770333}. For that we define, for each $n\in\nn$ with $n\geq 1$, $\ftight^{(n)}=\{\xia\in\ftight\,|\,\alpha\in\lbf^{\leq \infty},\ |\alpha|\geq n\}$ and $\sigma:\ftight^{(1)}\to\ftight$ by $\sigma(\xia)=H_{[a]\gamma}(\xi)$ if $\alpha=a\gamma$.

\begin{proposition}
	Let $\ftight^{(1)}$ and $\sigma$ be as above. Then:
	\begin{enumerate}[(i)]
		\item $\ftight^{(1)}$ is open;
		\item $\sigma$ is a local homeomorphism.
	\end{enumerate}
\end{proposition}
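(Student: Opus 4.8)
The plan is to establish the two claims separately, relying on the basis of compact-open sets $V_{e:e_1,\ldots,e_n}$ for $\ftight$ from Remark \ref{remark:basis.tight} and on the filter-surgery machinery of Subsection \ref{subsection.filter.surgery}.

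For (i), I would show that $\ftight^{(1)}$ is a union of basic open sets. A tight filter $\xi^\alpha$ lies in $\ftight^{(1)}$ precisely when $|\alpha|\geq 1$, i.e.\ when $\alpha\neq\eword$. The natural candidate is to write $\ftight^{(1)}=\bigcup_{a\in\alf,\ A\in\acfrg{a}} V_{(a,A,a)}$: indeed, $\xi^\alpha\in V_{(a,A,a)}$ means $(a,A,a)\in\xi^\alpha$, which by Remark \ref{remark.when.in.xialpha} forces $a$ to be a beginning of $\alpha$, hence $|\alpha|\geq 1$. Conversely, if $|\alpha|\geq1$ then $\alpha_1=a$ is a letter and, since $\xi^\alpha_1$ is a (nonempty) filter in $\acfrg{a}$, there is some $A\in\xi^\alpha_1$, giving $(a,A,a)\in\xi^\alpha$ by \eqref{eq.defines.xi_n}. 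Thus $\ftight^{(1)}$ is open.

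For (ii), the strategy is to exhibit, around each point, an open set on which $\sigma$ restricts to a homeomorphism onto an open set. First I would check that $\sigma$ is well defined and continuous: on each basic piece $V_{(a,A,a)}$ every filter has the form $\xi^{a\gamma}$ and $\sigma(\xi^{a\gamma})=H_{[a]\gamma}(\xi)$, which by Theorem \ref{thm.H-class.G-class.inverses} equals $(G_{(a)\gamma})^{-1}(\xi)$; continuity then follows from the continuity of the underlying cutting maps $h_{[a]\gamma}$ used to build $H$. For the local inverse, fix $\xi^\alpha\in\ftight^{(1)}$ with $\alpha=a\gamma$ and restrict to a basic neighborhood $V$ of the form $V_{(a,A,a):e_1,\ldots,e_n}$ on which every filter begins with the letter $a$. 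On such a neighborhood the gluing map $G_{(a)\,\cdot}$ provides a one-sided inverse: for $\eta\in\sigma(V)$ one sets $\sigma^{-1}(\eta)=G_{(a)\beta}(\eta)$, where $\beta$ is the word of $\eta$, and Theorem \ref{thm.H-class.G-class.inverses} gives $H_{[a]\beta}\circ G_{(a)\beta}=\mathrm{id}$ and $G_{(a)\beta}\circ H_{[a]\beta}=\mathrm{id}$, so $\sigma|_V$ is a bijection onto its image with continuous inverse $G_{(a)\,\cdot}$. It remains to verify that $\sigma(V)$ is open, for which I would show that $\sigma$ maps the basic sets of the cover to basic (hence open) sets, using that $G$ and $H$ are mutually inverse homeomorphisms and that they transport the order structure, so that the defining conditions $U_{e_i}^c$ are carried to analogous conditions on the image.

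The main obstacle I anticipate is the bookkeeping needed to confirm that $\sigma$ restricted to a neighborhood of the form $V_{(a,A,a):e_1,\ldots,e_n}$ really is \emph{injective} and has \emph{open} image, rather than merely a continuous right inverse. The subtlety is that a single tight filter $\eta$ in the image can in principle be the image of filters with different words $a\gamma$, so one must use the uniform choice of first letter $a$ (guaranteed by restricting to $V_{(a,A,a):\ldots}$) to recover $\gamma$ canonically as the word of $\eta$, and then invoke Theorem \ref{thm.H-class.G-class.inverses} to glue $a$ back unambiguously. Establishing that the resulting assignment $\eta\mapsto G_{(a)\beta}(\eta)$ lands back in the prescribed basic open set — i.e.\ that the side conditions $e_1,\ldots,e_n$ are respected under gluing — is the delicate point, and is where the translation between the complete-family description of filters and the order-theoretic behavior of $G$ and $H$ must be made precise.
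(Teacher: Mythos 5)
Your proposal is correct and follows essentially the same route as the paper: cover $\ftight^{(1)}$ by basic open sets determined by the first letter (the paper simply takes $A=r(a)$, giving $\ftight^{(1)}=\bigcup_{a\in\alf}V_{(a,r(a),a)}$), and show $\sigma$ restricted to each such set is a homeomorphism onto $V_{(\eword,r(a),\eword)}$ via the mutually inverse maps $H_{[a]\gamma}$ and $G_{(a)\gamma}$ of Theorem \ref{thm.H-class.G-class.inverses}. The ``delicate point'' you flag --- that the side conditions $e_1,\ldots,e_n$ are transported correctly under cutting and gluing --- is exactly what the paper resolves by the explicit computations of images and preimages of basic open sets using Lemmas \ref{lemma:A.belongs.h(F)} and \ref{lemma:A.belongs.g(F)}.
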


\begin{proof}
	To prove (i), observe that if $\xi=\xia$ with $|\alpha|\geq 1$, then $(\alpha_1,r(\alpha_1),\alpha_1)\in\xi$. Hence
	\[\ftight^{(1)}=\bigcup_{a\in\alf}V_{(a,r(a),a)},\]	
	which is an open set.
	
	For (ii) we prove that, when restricted to $V_{(a,r(a),a)}$, $\sigma$ is a homeomorphism between $V_{(a,r(a),a)}$ and $V_{(\eword,r(a),\eword)}$. Denote by $\sigma_a$ the restriction of $\sigma$ to $V_{(a,r(a),a)}$, with $V_{(\eword,r(a),\eword)}$ as codomain. We use Theorem \ref{thm.H-class.G-class.inverses} in what follows. To see that $\sigma_a$ is injective, suppose that $\sigma_a(\xi)=\zeta^{\gamma}=\sigma_a(\eta)$. In this case, $\xi=\xi^{a\gamma}$, $\eta=\eta^{a\gamma}$ and $H_{[a]\gamma}(\xi)=\sigma_a(\xi)=\sigma_a(\eta)=H_{[a]\gamma}(\eta)$. Since $H_{[a]\gamma}$ is injective, $\xi=\eta$. For the surjectivity, let $\zeta=\zeta^{\gamma}\in V_{(\eword,r(a),\eword)}$ whence $\zeta\in \ftightw{(a)\gamma}$. Define $\xi=G_{(a)\gamma}(\zeta)$, so that $\xi\in V_{(a,r(a),a)}$ and $\sigma_a(\xi)=H_{[a]\gamma}(\xi)=\zeta$.
	
	To show that $\sigma_a$ is a homeomorphism, one can use Lemma \ref{lemma:A.belongs.h(F)} to conclude that
	\[\sigma_a(V_{(a\gamma,A,a\gamma):(a\gamma\eta_1,B_1,a\gamma\beta_1),\ldots,(a\gamma\eta_n,B_n,a\gamma\beta_n)})=V_{(\gamma,A,\gamma):(\gamma\eta_1,B_1,\gamma\beta_1),\ldots,(\gamma\eta_n,B_n,\gamma\beta_n)}\]
	for an arbitrary basic open set $V_{(a\gamma,A,a\gamma):(a\gamma\eta_1,B_1,a\gamma\beta_1),\ldots,(a\gamma\eta_n,B_n,a\gamma\beta_n)}$ of $V_{(a,r(a),a)}$, and Lemma \ref{lemma:A.belongs.g(F)} to see that
	\begin{multline*}
		\sigma_a^{-1}(V_{\eword,r(a),\eword}\cap V_{(\gamma,A,\gamma):(\gamma\eta_1,B_1,\gamma\beta_1),\ldots,(\gamma\eta_n,B_n,\gamma\beta_n)})\\
		=V_{(a\gamma,A\cap r(a\gamma),a\gamma):(a\gamma\eta_1,B_1\cap r(a\gamma\eta_1),a\gamma\beta_1),\ldots,(a\gamma\eta_n,B_n\cap r(a\gamma\eta_n),a\gamma\beta_n)}
	\end{multline*}
	for an arbitrary basic open set $V_{\eword,r(a),\eword}\cap V_{(\gamma,A,\gamma):(\gamma\eta_1,B_1,\gamma\beta_1),\ldots,(\gamma\eta_n,B_n,\gamma\beta_n)}$ of $V_{(\eword,r(a),\eword)}$. Hence $\sigma_a$ and $\sigma_a^{-1}$ are continuous.
\end{proof}

Notice that for $\xia\in\ftight^{(1)}$, the length of the word associated with $\sigma(\xi)$ is $|\alpha|-1$. This implies that for $n\in\mathbb{N}$ with $n\geq 1$, $\text{dom}(\sigma^n)=\ftight^{(n)}$. Also if $\xi=\xi^{\alpha\beta}$ then by Theorem \ref{thm.compositions.G.and.H}, $\sigma^{|\alpha|}(\xi)=H_{[\alpha]\beta}(\xi)$. This implies that
\[\Gamma=\{(\eta,m-n,\xi)\,|\,m,n\in\mathbb{N},\ \eta\in\text{dom}(\sigma^m),\ \xi\in\text{dom}(\sigma^n)\text{ and }\sigma^m(\eta)=\sigma^n(\xi)\},\]
that is, $\Gamma$ is the Renault-Deaconu groupoid associated with $\sigma$ \cite[Definition 2.5]{MR1770333}.

Now, let us prove that the topology given in Proposition \ref{prop:topology.groupoid} is the same as the one given in \cite{MR1770333}, which is the topology defined by the basic open sets
\begin{equation}\label{eq:open.set.Renault.Deaconu}
	\mathcal{V}(X,Y,m,n)=\{(\eta,m-n,\xi)\,|\,(\eta,\xi)\in X\times Y\text{ and }\sigma^m(\eta)=\sigma^n(\xi)\}
\end{equation}
where $X$ (resp. Y) is an open subset of $\text{dom}(\sigma^m)$ (resp. $\text{dom}(\sigma^n)$) for which $\sigma^m|_X$ (resp. $\sigma^n|_Y$) is injective.

\begin{lemma}\label{lemma:equality.open.sets}
	If $(\alpha,A,\beta)\in S\lspace$ and $e_1=(\beta\delta_1,B_1,\beta\delta_1),\ldots,e_n=(\beta\delta_n,B_n,\beta\delta_n)\in E(S)$ are such that $B_i\subseteq r(A,\delta_i)$, then $\sigma^{|\alpha|}$ (resp. $\sigma^{|\beta|}$) is injective restricted to $V_{ss^*:f_1,\ldots,f_n}$ (resp. $V_{s^*s:e_1,\ldots,e_n}$) and  $Z_{s:e_1,\ldots,e_n}=\mathcal{V}(V_{ss^*:f_1,\ldots,f_n},V_{s^*s:e_1,\ldots,e_n},|\alpha|,|\beta|)$, where $f_1=(\alpha\delta_1,B_1,\alpha\delta_1)$, $\ldots$,  $f_n=(\alpha\delta_n,B_n,\alpha\delta_n)$.
\end{lemma}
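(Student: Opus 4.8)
The plan is to prove the two injectivity claims directly and then establish the set equality by a double inclusion, the heart of which is a single membership computation read off from Lemma \ref{lemma:cut.and.glue}. Throughout write $s=(\alpha,A,\beta)$, so that $s^*s=(\beta,A,\beta)$ and $ss^*=(\alpha,A,\alpha)$, and note $A\subseteq r(\alpha)\cap r(\beta)$ and that each $f_i=(\alpha\delta_i,B_i,\alpha\delta_i)$ genuinely lies in $E(S)$ since $B_i\subseteq r(A,\delta_i)\subseteq r(\alpha\delta_i)$. For the injectivity of $\sigma^{|\beta|}$ I would first note $V_{s^*s:e_1,\ldots,e_n}\subseteq V_{s^*s}$, and that by Remark \ref{remark.when.in.xialpha} every filter in $V_{s^*s}$ has $\beta$ as a beginning of its word, say $\xi=\xi^{\beta\gamma}$; then $\sigma^{|\beta|}(\xi)=H_{[\beta]\gamma}(\xi)$ has associated word $\gamma$, so two filters with equal image must share the tail $\gamma$, and injectivity of $H_{[\beta]\gamma}$ (a bijection by Theorem \ref{thm.H-class.G-class.inverses}) forces them to coincide. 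The argument for $\sigma^{|\alpha|}$ on $V_{ss^*:f_1,\ldots,f_n}$ is identical with $ss^*$ replacing $s^*s$.

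The core step fixes $\xi=\xi^{\beta\gamma}\in V_{s^*s}$ and sets $\eta=(G_{(\alpha)\gamma}\circ H_{[\beta]\gamma})(\xi)$. Since $(\beta,A,\beta)\in\xi$, Lemma \ref{lemma:cut.and.glue} (applied with $t=s$) shows that $\eta$ is a well-defined tight filter whose character is $\theta_s(\phi)$, and it supplies an explicit two-case formula for $\theta_s(\phi)(\delta,D,\delta)$. Evaluating at $ss^*$, i.e. $\delta=\alpha$, $D=A$ (the case $\delta=\alpha\delta'$ with $\delta'=\eword$), yields the condition $A\in\xi_{|\beta|}$, which holds; hence $ss^*\in\eta$. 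Evaluating instead at $f_i$, i.e. $\delta=\alpha\delta_i$, $D=B_i$, the same formula gives that $f_i\in\eta$ if and only if $\delta_i$ is a beginning of $\gamma$ and $r(A,\delta_i)\cap B_i\in\xi_{|\beta\delta_i|}$; the hypothesis $B_i\subseteq r(A,\delta_i)$ collapses the intersection to $B_i$, and by Remark \ref{remark.when.in.xialpha} the resulting condition is exactly $e_i\in\xi$. Thus $ss^*\in\eta$ and $f_i\in\eta\iff e_i\in\xi$.

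With this in hand the equality follows by double inclusion. For $\subseteq$, a triple $(\eta^{\alpha\gamma},|\alpha|-|\beta|,\xi^{\beta\gamma})\in Z_{s:e_1,\ldots,e_n}$ satisfies $\xi\in V_{s^*s:e_1,\ldots,e_n}$ and $H_{[\alpha]\gamma}(\eta)=H_{[\beta]\gamma}(\xi)$, which by Theorem \ref{thm.H-class.G-class.inverses} rewrites as $\eta=(G_{(\alpha)\gamma}\circ H_{[\beta]\gamma})(\xi)$; the core step then gives $ss^*\in\eta$ and $f_i\notin\eta$ (as $e_i\notin\xi$), that is $\eta\in V_{ss^*:f_1,\ldots,f_n}$, while $\sigma^{|\alpha|}(\eta)=H_{[\alpha]\gamma}(\eta)=H_{[\beta]\gamma}(\xi)=\sigma^{|\beta|}(\xi)$ places the triple in $\mathcal{V}(V_{ss^*:f_1,\ldots,f_n},V_{s^*s:e_1,\ldots,e_n},|\alpha|,|\beta|)$. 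For $\supseteq$, a triple in the right-hand side has $\eta\in V_{ss^*}$ and $\xi\in V_{s^*s}$, hence $\eta=\eta^{\alpha\gamma}$ and $\xi=\xi^{\beta\gamma'}$; the constraint $\sigma^{|\alpha|}(\eta)=\sigma^{|\beta|}(\xi)$ reads $H_{[\alpha]\gamma}(\eta)=H_{[\beta]\gamma'}(\xi)$, and comparing associated words forces $\gamma=\gamma'$, whereupon $\xi\in V_{s^*s:e_1,\ldots,e_n}$ together with $H_{[\alpha]\gamma}(\eta)=H_{[\beta]\gamma}(\xi)$ exhibits the triple as an element of $Z_{s:e_1,\ldots,e_n}$.

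I expect the main obstacle to be the bookkeeping of the core step: correctly specializing the two-case formula from the proof of Lemma \ref{lemma:cut.and.glue} to the present variable names, and recognizing that $B_i\subseteq r(A,\delta_i)$ is precisely the hypothesis that identifies $f_i\in\eta$ with $e_i\in\xi$. The degenerate cases $\alpha=\eword$ or $\beta=\eword$, where $\sigma^{0}$ and $H_{[\eword]\,\cdot}$ are identity maps, should be checked but cause no real difficulty.
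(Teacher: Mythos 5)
Your proof is correct and follows essentially the same route as the paper's: injectivity of $\sigma^{|\alpha|}$ (resp.\ $\sigma^{|\beta|}$) on $V_{ss^*}$ (resp.\ $V_{s^*s}$) via Theorem \ref{thm.H-class.G-class.inverses}, then the key equivalence $f_i\in\eta\iff e_i\in\xi$ for triples in $\Gamma$, from which the set equality follows. The only difference is one of detail: where the paper derives that equivalence tersely from the containment $B_i\subseteq r(\alpha\delta_i)\cap r(\beta\delta_i)$ and ``the respective definitions,'' you make it explicit by specializing the two-case formula of Lemma \ref{lemma:cut.and.glue} to $ss^*$ and $f_i$, which is a legitimate (and arguably more careful) way of verifying the same step.
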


\begin{proof}
	For injectivity, it is sufficient to show that $\sigma^{|\alpha|}$ is injective in $V_{ss^*}$. This is indeed the case, for, if $\sigma^{|\alpha|}(\eta)=\sigma^{|\alpha|}(\xi)=\zeta^{\gamma}$ then $\eta=G_{(\alpha)\gamma}(\zeta)=\xi$ by Theorem \ref{thm.H-class.G-class.inverses}.
	
	Now, by the definition of $S\lspace$, $A\subseteq r(\alpha)\cap r(\beta)$. Since the labelled space is weakly left-resolving, this implies that for each $i=1,\ldots,n$, \[B_i\subseteq r(A,\delta_i)\subseteq r(r(\alpha)\cap r(\beta),\delta i)=r(\alpha\delta_i)\cap r(\beta\delta_i).\]
	From this, it follows that for $(\eta^{\alpha\gamma},|\alpha|-|\beta|,\xi^{\beta\gamma})\in\Gamma$, $\eta\in V_{ss^*:f_1,\ldots,f_n}$ if and only if $\xi \in V_{s^*s:e_1,\ldots,e_n}$.
	
	The equality between the sets in the statement is then a consequence of their respective definitions.
\end{proof}

\begin{proposition}
	The topologies on $\Gamma$ given by Proposition \ref{prop:topology.groupoid} and the basic open sets given by \ref{eq:open.set.Renault.Deaconu} coincide.
\end{proposition}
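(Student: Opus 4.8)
The plan is to prove the two topologies coincide by showing that each basis is contained in the topology generated by the other, using Lemma \ref{lemma:equality.open.sets} as the essential bridge. Since we already have two explicit bases — the sets $Z_{s,e:e_1,\ldots,e_n}$ from Proposition \ref{prop:topology.groupoid} and the sets $\mathcal{V}(X,Y,m,n)$ from (\ref{eq:open.set.Renault.Deaconu}) — it suffices to verify mutual refinement: every $Z_{s,e:e_1,\ldots,e_n}$ is open in the Renault-Deaconu topology, and every $\mathcal{V}(X,Y,m,n)$ is open in the $\Phi$-induced topology.

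First I would handle the easy direction. Given a basic set $Z_{s,e:e_1,\ldots,e_n}$ with $s=(\mu,A,\nu)$, I may assume (as noted after the definition of $U_{e:e_1,\ldots,e_n}$) that each $e_i\leq e=s^*s=(\nu,A,\nu)$, so by Proposition \ref{prop:filters.in.E(S)} each $e_i$ has the form $(\nu\delta_i,B_i,\nu\delta_i)$ with $B_i\subseteq r(A,\delta_i)$. This is exactly the hypothesis of Lemma \ref{lemma:equality.open.sets}, which identifies $Z_{s:e_1,\ldots,e_n}$ with $\mathcal{V}(V_{ss^*:f_1,\ldots,f_n},V_{s^*s:e_1,\ldots,e_n},|\mu|,|\nu|)$, a set of the form (\ref{eq:open.set.Renault.Deaconu}) (the lemma also supplies the needed injectivity of $\sigma^{|\mu|}$ and $\sigma^{|\nu|}$ on the relevant sets). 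Hence each $Z$ is Renault-Deaconu open. For the general case $e\neq s^*s$, I would reduce to this by replacing $s=(\mu,A,\nu)$ with $s'=(\mu,A',\nu)$ where $A'$ encodes $e$, observing that $Z_{s,e:e_1,\ldots,e_n}=Z_{s':e_1,\ldots,e_n}$, so no generality is lost.

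The other direction is where the real work lies. Given a basic open $\mathcal{V}(X,Y,m,n)$ and a point $(\eta,m-n,\xi)$ in it, I would produce a basic $Z$ sitting inside it and containing the point. Since $X,Y$ are open in $\text{dom}(\sigma^m)=\ftight^{(m)}$ and $\text{dom}(\sigma^n)=\ftight^{(n)}$ respectively, and since the $V_{e:e_1,\ldots,e_n}$ form a basis (Remark \ref{remark:basis.tight}), I can shrink $X$ and $Y$ to basic neighbourhoods of $\eta$ and $\xi$ on which $\sigma^m$ and $\sigma^n$ remain injective. Writing $\eta=\eta^{\mu\gamma}$ and $\xi=\xi^{\nu\gamma}$ with $|\mu|=m$, $|\nu|=n$ and the common tail $\gamma$ given by $\sigma^m(\eta)=\sigma^n(\xi)$, I would choose $s=(\mu,r(\mu)\cap r(\nu),\nu)$ (nonempty by \cite[Proposition 4.6]{MR3680957} since the cut filters agree) and select the $e_i$ to carve out the chosen basic neighbourhood of $\xi$; Lemma \ref{lemma:equality.open.sets} then matches this $Z$ with a $\mathcal{V}$ contained in the original.

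The main obstacle I anticipate is the bookkeeping needed to guarantee injectivity after shrinking and to align the two indexings: the Renault-Deaconu basis records open sets $X,Y$ in the unit space together with the exponents $m,n$, whereas the $Z$ basis records semigroup data $s,e_i$. The crux is that Lemma \ref{lemma:equality.open.sets} only directly covers the case where $X,Y$ are themselves of the special form $V_{ss^*:\cdots}$ and $V_{s^*s:\cdots}$ with the compatibility $B_i\subseteq r(A,\delta_i)$; for a general $X,Y$ one must first refine to this form, which requires knowing that $\sigma^m$ restricted to $V_{ss^*}$ is injective (supplied by the first part of the lemma's proof via Theorem \ref{thm.H-class.G-class.inverses}) and that the correspondence $\eta\in V_{ss^*:f_1,\ldots,f_n}\iff\xi\in V_{s^*s:e_1,\ldots,e_n}$ holds along fibres. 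Once these compatibilities are verified, both inclusions follow and the topologies coincide.
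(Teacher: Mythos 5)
Your overall strategy coincides with the paper's: one inclusion comes from Lemma \ref{lemma:equality.open.sets}, which exhibits each $Z_{s:e_1,\ldots,e_n}$ (with $e_i\leq s^*s$) as a set of the form (\ref{eq:open.set.Renault.Deaconu}), and the reverse inclusion is proved pointwise by shrinking to a basic neighbourhood $V_{e:e_1,\ldots,e_n}$ of $\xi$ inside $Y$ and manufacturing an element of $S$ from that data. (The paper in fact only shrinks $Y$; your instinct to also control $X$ is reasonable, and the fibrewise equivalence $\eta\in V_{ss^*:f_1,\ldots,f_n}\Leftrightarrow\xi\in V_{s^*s:e_1,\ldots,e_n}$ from Lemma \ref{lemma:equality.open.sets} lets you transport conditions on $\eta$ to conditions on $\xi$, so this costs nothing.)

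There is, however, a concrete flaw in the second direction: the element you choose, $s=(\mu,r(\mu)\cap r(\nu),\nu)$, discards the \emph{positive} part of the basic neighbourhood of $\xi$. If $\xi\in V_{e:e_1,\ldots,e_n}\subseteq Y$ with $e=(\nu\delta,A,\nu\delta)$ (by Remark \ref{remark.when.in.xialpha}, $\delta$ is a beginning of the common tail $\gamma$), then $Z_{s:e_1,\ldots,e_n}$ only constrains its right-hand filters to lie in $V_{s^*s:e_1,\ldots,e_n}$ with $s^*s=(\nu,r(\mu)\cap r(\nu),\nu)$; the inclusion $V_{s^*s}\subseteq V_{e}$ would require $s^*s\leq e$, which fails whenever $\delta\neq\eword$, and even for $\delta=\eword$ it fails unless $r(\mu)\cap r(\nu)\subseteq A$. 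The negative conditions $e_1,\ldots,e_n$ cannot absorb $e$, since they only remove points. So your $Z_{s:e_1,\ldots,e_n}$ contains the given point but need not be contained in $\mathcal{V}(X,Y,m,n)$. The repair is exactly the paper's choice $s=(\mu\delta,A\cap r(\mu\delta),\nu\delta)$, which is a valid element of $S$ because $A\in\xi_{|\nu\delta|}$ and $r(\mu\delta)$ lies in the common cut filter of $\eta$ and $\xi$, forcing $A\cap r(\mu\delta)\neq\emptyset$. The same oversight appears in your reduction of $Z_{s,e:e_1,\ldots,e_n}$ to the case $e=s^*s$: replacing $(\mu,A,\nu)$ by $(\mu,A',\nu)$ only works when $e$ has word exactly $\nu$; for $e=(\nu\delta,B,\nu\delta)$ one must instead pass to $(\mu\delta,r(A,\delta)\cap B,\nu\delta)$.
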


\begin{proof}
	By Lemma \ref{lemma:equality.open.sets}, it is sufficient to show that for each $(\eta,m-n,\xi)\in \mathcal{V}(X,Y,m,n)$, there exists $s\in S\lspace$ and $e_1,\ldots,e_n\in E(S)$ such that $(\eta,m-n,\xi)\in Z_{s:e_1,\ldots,e_n}\subseteq \mathcal{V}(X,Y,m,n)$. Given such $(\eta,m-n,\xi)$, there exist labelled paths $\alpha,\beta,\gamma$ such that $|\alpha|=m$, $|\beta|=n$, $\eta=\eta^{\alpha\gamma}$, $\xi=\xi^{\beta\gamma}$, and $H_{[\alpha]\gamma}(\eta)=H_{[\beta]\gamma}(\xi)$. Since $\xi\in Y$ and $Y$ is open there exists $e,e_1,\ldots,e_n\in E(S)$ such that $\xi\in V_{e:e_1,\ldots,e_n}\subseteq Y$. This implies, by Remark \ref{remark.when.in.xialpha}, that $e$ is of the form $(\beta\delta,A,\beta\delta)$ for some beginning $\delta$ of $\gamma$. Due to the construction of $H$, $A'=A\cap r(\alpha\delta)\neq \emptyset$. Define $s=(\alpha\delta,A',\beta\delta)$, so that $Z_{s:e_1,\ldots,e_n}$ is the desired set.
\end{proof}

\begin{corollary}
	The groupoid $\Gamma$ is amenable, so that the reduced and full C*-algebras coincide.
\end{corollary}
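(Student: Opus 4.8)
The plan is to read off amenability from the identification, established in the paragraphs immediately preceding this corollary, of $\Gamma$ with the Renault-Deaconu groupoid of the local homeomorphism $\sigma$ acting on the unit space $\ftight$. I would first record that $\ftight$ is Hausdorff and admits a basis of compact-open sets (Remark~\ref{remark:basis.tight}), hence is locally compact, Hausdorff and totally disconnected, and that, as shown above,
\[\Gamma=\{(\eta,m-n,\xi)\ |\ m,n\in\nn,\ \eta\in\ftight^{(m)},\ \xi\in\ftight^{(n)},\ \sigma^m(\eta)=\sigma^n(\xi)\}.\]
Groupoids of this form are known to be amenable \cite{MR1770333,MR584266}, and I would invoke this together with the standard fact that an amenable étale groupoid has coinciding full and reduced C*-algebras \cite{MR584266} to obtain the statement. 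For completeness I would indicate the mechanism behind the amenability.

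The map $c\colon\Gamma\to\mathbb{Z}$ sending $(\eta,k,\xi)$ to $k$ is a continuous cocycle onto the amenable group $\mathbb{Z}$, so by the permanence of amenability under extensions by amenable groups it suffices to prove that the kernel
\[c^{-1}(0)=\{(\eta,0,\xi)\in\Gamma\ |\ \sigma^k(\eta)=\sigma^k(\xi)\text{ for some }k\in\nn\}\]
is amenable. I would filter $c^{-1}(0)$ by the open subgroupoids $H_m=\{(\eta,0,\xi)\ |\ \sigma^k(\eta)=\sigma^k(\xi)\text{ for some }k\leq m\}$, which form an increasing sequence with union $c^{-1}(0)$. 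Each $H_m$ is a principal étale groupoid whose classes are the finite-depth merging relations determined by the fibres of the iterates $\sigma,\ldots,\sigma^m$; being a continuous bundle of pair groupoids over the quotient, $H_m$ is an elementary, and in particular amenable, groupoid. Since amenability is preserved under increasing unions of open subgroupoids, $c^{-1}(0)$ is amenable, and therefore so is $\Gamma$.

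The point requiring the most care is the amenability of the kernel: because $\sigma$ may have infinite fibres (the shift on a labelled space need not be finite-to-one, as for shift spaces on infinite alphabets), the subgroupoids $H_m$ need not be proper, so one cannot argue via properness. Instead the argument relies on recognising $c^{-1}(0)$ as an approximately elementary (AF-type) groupoid, assembled from the pair groupoids on the fibres of the iterates of $\sigma$, and then on the permanence properties of amenability — stability under increasing unions of open subgroupoids, and under extensions governed by a continuous cocycle into an amenable group — from the Anantharaman-Delaroche–Renault theory as used in \cite{MR2419901,MR584266}. The final implication, that amenability of the étale groupoid $\Gamma$ forces $C^*(\Gamma)=C^*_r(\Gamma)$, is then immediate, and this is exactly the assertion that the reduced and full C*-algebras coincide.
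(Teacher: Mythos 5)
Your proposal is correct and takes essentially the same route as the paper: the paper's entire proof is to invoke Proposition 2.9 of \cite{MR1770333}, which asserts amenability of the Renault--Deaconu groupoid of a local homeomorphism, exactly as you do in your opening paragraph. Your additional sketch of the mechanism (the cocycle onto $\mathbb{Z}$ and the approximately elementary kernel) is just an unpacking of the proof of that cited proposition, not a different argument.
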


\begin{proof}
	This is an immediate consequence of Proposition 2.9 from \cite{MR1770333}.
\end{proof}

\section{The C*-algebra of the labelled graph as a groupoid C*-algebra}\label{section:cstar}

We begin this section recalling from \cite{MR584266} that, since $\Gamma$ is an étale groupoid, the corresponding Haar system is given by counting measures and, for $f,g \in C_c(\Gamma)$, their product is given by
\begin{equation}\label{eq:product.groupoid}
	(f*g)(\eta,n,\xi)=\sum_{\zeta,m:(\eta,m,\zeta)\in\Gamma}f(\eta,m,\zeta)g(\zeta,n-m,\xi).
\end{equation}

\begin{proposition}\label{prop:relations.in.grupoid.algebra}
	Consider the following functions in $C_c(\Gamma)$ viewed as elements of $C^*(\Gamma)$:
	\[ P_A = \chi_{Z_{(\eword,A,\eword)}} \]
	\[ S_a = \chi_{Z_{(a,r(a),\eword)}} \]
	where $A\in\acf$, $a\in \alf$ and $\chi$ represents the characteristic function of the given set. Then the families $\{P_A\}_{A\in\acf}$ and $\{S_a\}_{a\in\alf}$ satisfy the relations defining $C^*\lspace$.
\end{proposition}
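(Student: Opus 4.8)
The plan is to verify each of the four defining relations (i)--(iv) of $C^*\lspace$ directly by computing the relevant convolution products and characteristic functions using the formula (\ref{eq:product.groupoid}). The key observation throughout is that the generating sets $Z_{(\eword,A,\eword)}$ and $Z_{(a,r(a),\eword)}$ are bisections (compact-open slices on which both range and source maps are injective), so that convolution of their characteristic functions reduces to an honest pointwise computation: $(\chi_{Z}*\chi_{W})(\eta,n,\xi)$ is nonzero precisely when there is a unique composable pair $(\eta,m,\zeta)\in Z$ and $(\zeta,n-m,\xi)\in W$, and equals $1$ there. I would first record this principle explicitly, since it turns every relation into a set-theoretic identity among the $Z$-sets.

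First I would handle relation (i). The identity $P_{A\cap B}=P_AP_B$ amounts to $Z_{(\eword,A,\eword)}\cap Z_{(\eword,B,\eword)}=Z_{(\eword,A\cap B,\eword)}$ at the level of unit-space sets, which follows because a tight filter contains both $(\eword,A,\eword)$ and $(\eword,B,\eword)$ iff it contains $(\eword,A\cap B,\eword)$, using that $\xi_0$ is a filter closed under finite intersections. The inclusion--exclusion identity $p_{A\cup B}=p_A+p_B-p_{A\cap B}$ and $p_\emptyset=0$ are then immediate from $\chi_{X\cup Y}=\chi_X+\chi_Y-\chi_{X\cap Y}$ and $Z_{(\eword,\emptyset,\eword)}=\emptyset$.

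Next, for relations (ii) and (iii) I would compute the products $P_AS_a$, $s_a^*s_a$, and $s_b^*s_a$ using (\ref{eq:product.groupoid}). The cleanest route is to identify $S_a^*=\chi_{Z_{(a,r(a),\eword)}^{-1}}=\chi_{Z_{(\eword,r(a),a)}}$ via the inverse on $\Gamma$, and then to show that $Z_{(\eword,A,\eword)}\cdot Z_{(a,r(a),\eword)}$ equals $Z_{(a,r(A,a),\eword)}$ as a product of bisections, which after computing the other side $S_ap_{r(A,a)}$ as $\chi_{Z_{(a,r(A,a),\eword)}}$ gives (ii); the computations here rely on Lemmas \ref{lemma:A.belongs.g(F)} and \ref{lemma:A.belongs.h(F)} to track how membership of $A$ in a filter transfers under the cutting and gluing maps built into the $H$- and $G$-compatibility condition defining $\Gamma$. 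For (iii), $s_a^*s_a=p_{r(a)}$ follows because composing $Z_{(\eword,r(a),a)}$ with $Z_{(a,r(a),\eword)}$ lands in the unit-space slice $Z_{(\eword,r(a),\eword)}$, while $s_b^*s_a=0$ for $b\neq a$ follows because no filter can have both $a$ and $b$ as the first letter of its associated word, so the relevant composable pairs do not exist and the convolution is identically zero.

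The main obstacle I expect is relation (iv), the summation relation $p_A=\sum_{a\in\lbf(A\dgraph^1)}s_ap_{r(A,a)}s_a^*$ on the appropriate $A$. Here the hard part is genuinely using the tightness of the filters: one must show that the unit-space set $Z_{(\eword,A,\eword)}$ decomposes as the \emph{disjoint} union $\bigcup_{a\in\lbf(A\dgraph^1)}Z_{(a,r(A,a),a)}$, and this is exactly where Theorem \ref{thm.tight.filters.in.es} enters. For an $A$ satisfying the hypothesis of (iv)---that is, $0<\card{\lbf(A\dgraph^1)}<\infty$ with no nonempty sink-subset in $\acf$---condition (ii)(b) of Theorem \ref{thm.tight.filters.in.es} is excluded and $\lbf(A\dgraph^1)$ is finite, so a tight filter with $A$ in its zeroth level cannot be of finite type terminating at $\eword$; it must extend by at least one letter $a\in\lbf(A\dgraph^1)$, placing it in exactly one $Z_{(a,r(A,a),a)}$. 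Disjointness across distinct letters is the $b\neq a$ vanishing already used in (iii), and the finiteness guarantees the sum lies in $C_c(\Gamma)$. I would therefore present (iv) last and allot it the most care, checking both that the summands are pairwise disjoint bisection-slices and that every tight filter in $Z_{(\eword,A,\eword)}$ is captured by precisely one summand, with the tightness classification doing the essential work.
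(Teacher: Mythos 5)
Your proposal is correct and takes essentially the same route as the paper's proof: each relation is verified by computing the convolution products pointwise (your bisection observation is exactly the paper's remark that each sum in (\ref{eq:product.groupoid}) has at most one nonzero term), and relation (iv) is reduced via Theorem \ref{thm.tight.filters.in.es} to the fact that a tight filter containing $(\eword,A,\eword)$ must have a nonempty associated word whose first letter lies in $\lbf(A\dgraph^1)$, giving the disjoint decomposition of the unit-space slice. The only step you label ``immediate'' that deserves one explicit line is $P_{A\cup B}=P_A+P_B-P_{A\cap B}$, which requires the set identity $Z_{(\eword,A\cup B,\eword)}=Z_{(\eword,A,\eword)}\cup Z_{(\eword,B,\eword)}$ and hence that $\xi_0$ is an ultrafilter, therefore a prime filter, as the paper notes.
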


\begin{proof}
	Fix an arbitrary element $(\eta,n,\xi)\in\Gamma$. By the definition of $Z_{(\eword,A,\eword)}$, $P_A(\eta,n,\xi)=1$ if and only if $n=0$, $\eta=\xi$, and $A\in\xi_0$. Similarly, by the definition of $Z_{(a,r(a),\eword)}$, $S_a(\eta,n,\xi)=1$ if and only if, $n=1$, $\eta=\eta^{a\gamma}$ and $H_{[a]\gamma}(\eta)=\xi$. For the remainder of the proof, let $A,B\in\acf$ and $a,b\in\alf$ be given. When we use (\ref{eq:product.groupoid}) bellow, all the sums are either zero or only have one term which is not zero, in which case the sum will be equal to $1$.
	
	Since $\xi_0$ is a filter in $\acf$, $\emptyset\notin\xi_0$ and hence $P_{\emptyset}=0$.
	
	Now,
	\[(P_A*P_B)(\eta,n,\xi)=\sum_{\zeta,m:(\eta,m,\zeta)\in\Gamma}P_A(\eta,m,\zeta)P_B(\zeta,n-m,\xi).\]
	This sum is not zero precisely when $m=n=0$, $\eta=\zeta=\xi$, $A\in\xi_0$ and $B\in\xi_0$. Using that $x_0$ is a filter in $\acf$, $A\in\xi_0$ and $B\in\xi_0$ if and only if $A\cap B\in\xi_0$, and therefore $P_A*P_B=P_{A\cap B}$.
	
	The equality $P_{A\cup B}=P_A+P_B-P_{A\cap B}$ follows from the fact that $\xi_0$ is an ultrafilter and therefore a prime filter, so that $A\cup B\in\xi_0$ if and only if $A\in\xi_0$ or $B\in\xi_0$.
	
	We compute both $P_A*S_a$ and $S_a*P_{r(A,a)}$ at $(\eta,n,\xi)$ and analyse when they are not zero. On the one hand
	\[(P_A*S_a)(\eta,n,\xi)=\sum_{\zeta,m:(\eta,m,\zeta)\in\Gamma}P_A(\eta,m,\zeta)S_a(\zeta,n-m,\xi),\]
	which is not zero if and only if $m=0$, $n=1$, $\zeta=\eta=\eta^{a\gamma}$, $A\in\eta_0$, $H_{[a]\gamma}(\eta)=\xi$ and $r(a)\in\xi_0$. On the other hand
	\[(S_a*P_{r(A,a)})(\eta,n,\xi)=\sum_{\zeta,m:(\eta,m,\zeta)\in\Gamma}S_a(\eta,m,\zeta)P_{r(A,a)}(\zeta,n-m,\xi),\]
	which is not zero if and only if $m=n=1$, $\eta=\eta^{a\gamma}$, $H_{[a]\gamma}(\eta)=\xi=\zeta$ and $r(A,a)\in\xi_0$. In the condition that $H_{[a]\gamma}(\eta)=\xi$ we have that $\xi_0\subseteq \eta_1$ and, in this case, $r(A,a)\in\xi_0$ if and only if $r(a)\in\xi_0$ (since $r(A,a)\subseteq r(a)$ and $\xi_0$ is a filter) and $A\in \eta_0$ (see the definition of complete family in Subsection \ref{subsection:filters.E(S)}). The equality $P_A*S_a=S_a*P_{r(A,a)}$ follows.
	
	We have that
	\begin{align*}
		(S_a^**S_b)(\eta,n,\xi)&=\sum_{\zeta,m:(\eta,m,\zeta)\in\Gamma}S_a^*(\eta,m,\zeta)S_b(\zeta,n-m,\xi)\\
		&=\sum_{\zeta,m:(\eta,m,\zeta)\in\Gamma}S_a(\zeta,-m,\eta)S_b(\zeta,n-m,\xi).
	\end{align*}
	A necessary condition for the above sum not to be zero is that the labelled path associated with $\zeta$ must start with $a$ and $b$ and, in particular, it is necessary that $a=b$. Hence, $S_a^**S_b=0$ if $a\neq b$. When $a=b$, the above sum is not zero if and only if $\zeta=\zeta^{a\gamma}$, $\eta=H_{[a]\gamma}(\zeta)=\xi$, $m=-1$, $n=0$ and $r(a)\in\xi_0$. This implies that $S_a^**S_a=P_{r(a)}$.
	
	Finally, for the last relation, let $A\in\acf$ be such that $0<\card{\lbf(A\dgraph^1)}<\infty$, and such that there is no $C\in\acf$ with $\emptyset\neq C\subseteq A\cap \dgraph^0_{sink}$. We need to verify that
	\begin{equation}\label{eq:last.relation}
		P_A=\sum_{a\in\lbf(A\dgraph^1)}S_a*P_{r(A,a)}*S_a^*=P_A*\sum_{a\in\lbf(A\dgraph^1)}S_a*S_a^*.
	\end{equation}
	Let us first apply $S_a*S_a^*$ to $(\eta,n,\xi)$,
	\begin{align*}
		(S_a*S_a^*)(\eta,n,\xi)&=\sum_{\zeta,m:(\eta,m,\zeta)\in\Gamma}S_a(\eta,m,\zeta)S_a^*(\zeta,n-m,\xi)\\
		&=\sum_{\zeta,m:(\eta,m,\zeta)\in\Gamma}S_a(\eta,m,\zeta)S_a(\xi,m-n,\zeta),
	\end{align*}
	which is not zero if and only if $m=1$, $n=0$, $\zeta=\zeta^{\gamma}$, $\xi=\xi^{a\gamma}$, $\eta=\eta^{a\gamma}$ and $H_{[a]\gamma}(\eta)=\zeta=H_{[a]\gamma}(\xi)$. Now
	\[P_A*(S_a*S_a^*)(\eta,n,\xi)=\sum_{\zeta,m:(\eta,m,\zeta)\in\Gamma}P_A(\eta,m,\zeta)(S_a*S_a^*)(\zeta,n-m,\xi),\]
	and using the above calculations, we see that the sum is not zero if and only if $m=n=0$, $A\in\eta_0$, $\zeta=\eta=\eta^{a\gamma}$, $\xi=\xi^{a\gamma}$ and $H_{[a]\gamma}(\eta)=H_{[a]\gamma}(\xi)$. Applying $G_{(a)\gamma}$ to the last equality we conclude that in this case $\eta=\gamma$. On the other hand, as seen above, $P_A(\eta,n,\xi)\neq 0$ if and only if $n=0$, $A\in\eta_0$ and $\xi=\eta$. By Theorem \ref{thm.tight.filters.in.es} and the assumptions on $A$, if $\eta=\eta^{\gamma}$, then $A\in\eta_0$ implies that $\gamma\neq\eword$, and in this case $\gamma=a\gamma'$ for some $a$ such that $a\in\lbf(A\dgraph^1)$. Hence, equation (\ref{eq:last.relation}) holds.

\end{proof}

Using the universal property of $C^*\lspace$, there is a *-homomorphism
\begin{equation}\label{eq:homomorphism}
	\pi:C^*\lspace\to C^*(\Gamma)
\end{equation}
such that $\pi(p_A)=P_A$ for all $A\in\acf$ and $\pi(s_a)=S_a$ for all $a\in \alf$. Our next goal is to show that $\pi$ is an isomorphism. To prove that $\pi$ is injective we use the gauge-invariance uniqueness theorem.

\begin{theorem}[\cite{MR3614028}, Corollary 3.10]
	Let $\lspace$ be a weakly left-resolving normal labelled space.
	Let $\{p_A , s_a\,|\, A \in \acf , a \in \alf\}$ be the universal representation of $\lspace$ that generates $C^*\lspace$, let $\{q_A , t_a\,|\, A \in \acf , a \in \alf\}$ be a representation of $\lspace$ in a C*-algebra $\mathcal{X}$ and let $\varphi$ be the unique *-homomorphism from $C^*\lspace$ to $\mathcal{X}$ that maps each $p_A$ to $q_A$ and each $s_a$ to $t_a$. Then $\varphi$ is injective if and only if $q_A$ is non-zero whenever $A\neq\emptyset$,	and for each $z\in\mathbb{T}$ there exists a *-homomorphism $\rho_z:C^*(\{q_A , t_a\,|\, A \in \acf , a \in \alf\}) \to
	C^*(\{q_A , t_a\,|\, A \in \acf , a \in \alf\})$ such that $\rho_z ( q_A ) = q_A$ and $\rho_z ( t_a ) = zt_a$ for $A\in\acf$ and $a\in\alf$.
\end{theorem}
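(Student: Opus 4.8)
The plan is to treat this as a standard gauge-invariance uniqueness result, so that the two implications have very different characters. The \emph{only if} direction is immediate once the universal gauge action is in hand. By the universal property of $C^*\lspace$, for each $z\in\mathbb{T}$ the family $\{p_A,\,zs_a\}$ again satisfies relations (i)--(iv) (the scalar could only interfere in (iii), where the factors $z$ and $\bar z$ cancel), so there is a strongly continuous action $\gamma\colon\mathbb{T}\to\mathrm{Aut}(C^*\lspace)$ with $\gamma_z(p_A)=p_A$ and $\gamma_z(s_a)=zs_a$. If $\varphi$ is injective it is an isomorphism onto $B:=C^*(\{q_A,t_a\})$, and $\rho_z:=\varphi\circ\gamma_z\circ\varphi^{-1}$ satisfies the required identities; moreover $q_A=\varphi(p_A)\neq 0$ whenever $A\neq\emptyset$, since $p_A\neq0$ for nonempty $A$ (witnessed, for instance, by the representation $\pi$ of (\ref{eq:homomorphism})).

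For the substantial \emph{if} direction I would run the conditional-expectation argument. First note that each $\rho_z$ is automatically an automorphism (with inverse $\rho_{\bar z}$) and that $z\mapsto\rho_z$ is point-norm continuous, since both facts may be verified on the dense $*$-subalgebra $\mathrm{span}\{t_\alpha q_A t_\beta^*\}$, on which $\rho_z$ acts by the scalar $z^{|\alpha|-|\beta|}$; thus $\rho$ is a genuine gauge action on $B$. Averaging over $\mathbb{T}$ then yields faithful conditional expectations
\[ \Phi(x)=\int_{\mathbb{T}}\gamma_z(x)\,dz,\qquad \Psi(y)=\int_{\mathbb{T}}\rho_z(y)\,dz \]
onto the fixed-point algebras $(C^*\lspace)^{\gamma}$ and $B^{\rho}$, and the identity $\varphi\circ\gamma_z=\rho_z\circ\varphi$ (checked on generators) gives $\varphi\circ\Phi=\Psi\circ\varphi$. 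The usual reduction applies: if $\varphi(x)=0$ then $\Psi(\varphi(x^*x))=0$, so $\varphi(\Phi(x^*x))=0$; if $\varphi$ is injective on the fixed-point algebra this forces $\Phi(x^*x)=0$, and faithfulness of $\Phi$ gives $x=0$. The whole problem therefore reduces to injectivity of $\varphi$ on the core $\mathcal{F}=\overline{\mathrm{span}}\{s_\alpha p_A s_\beta^*\ |\ |\alpha|=|\beta|\}$.

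To prove injectivity on $\mathcal{F}$ I would proceed in two stages. The base case is the commutative vertex subalgebra $\mathcal{A}_0=\overline{\mathrm{span}}\{p_A\}$, which by relation (i) and Stone duality is isomorphic to $C_0(X_{\eword})$, the basic clopen set $\widehat{A}=\{\text{ultrafilters}\ni A\}$ corresponding to $p_A$ with $\widehat A\neq\emptyset$ exactly when $A\neq\emptyset$. The kernel of $\varphi|_{\mathcal{A}_0}$ corresponds to an open $U\subseteq X_{\eword}$; were $U$ nonempty it would contain some nonempty basic clopen $\widehat A$, forcing $q_A=\varphi(p_A)=0$ against the hypothesis, so $\varphi|_{\mathcal{A}_0}$ is injective. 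In the second stage I would filter $\mathcal{F}$ as the closure of an increasing union of finite-length blocks $\mathcal{F}_n=\overline{\mathrm{span}}\{s_\alpha p_A s_\beta^*\ |\ |\alpha|=|\beta|\le n\}$ and use that the elements $s_\alpha p_A s_\beta^*$ behave as generalized matrix units over the commutative algebras attached to each level (their products governed by relations (ii)--(iii), with $s_b^*s_a=0$ for $a\neq b$ enforcing orthogonality across distinct paths). Injectivity on the diagonal commutative algebras then propagates through these matrix-unit systems, level by level, to injectivity on each $\mathcal{F}_n$ and hence on $\mathcal{F}$.

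The main obstacle is exactly this last step: for a labelled space the core $\mathcal{F}$ is far more intricate than the AF core of an ordinary graph algebra, because the idempotents are indexed by the Boolean algebra $\acf$ rather than by vertices and the blocks $\mathcal{F}_n$ need not be finite-dimensional (indeed $X_{\eword}$ may be uncountable). Pinning down the precise matrix-unit decomposition of each $\mathcal{F}_n$ over the commutative algebras $\acfra$ attached to each level, and checking that relation (iv) does not spoil faithfulness of the diagonal expectation, is where the real work lies; this is precisely the analysis of \cite{MR3614028}, which we invoke here rather than reproduce.
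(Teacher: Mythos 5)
This statement is not proved in the paper at all: it is imported verbatim as Corollary 3.10 of \cite{MR3614028}, so there is no in-paper argument to compare yours against. Your outline is the standard gauge-invariance uniqueness strategy and, as far as it goes, it is the right one: the universal gauge action for the \emph{only if} direction (the scalar $z$ indeed only threatens relation (iii), where it cancels), the intertwining $\varphi\circ\gamma_z=\rho_z\circ\varphi$, the faithful conditional expectations obtained by averaging, the reduction to the fixed-point algebra $\overline{\mathrm{span}}\{s_\alpha p_A s_\beta^*\,:\,|\alpha|=|\beta|\}$, and the Stone-duality argument showing that $q_A\neq 0$ for $A\neq\emptyset$ forces injectivity on the commutative subalgebra $\overline{\mathrm{span}}\{p_A\}$. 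Your aside that $p_A\neq 0$ can be witnessed by $\pi$ is legitimate and not circular, since $P_A=\chi_{Z_{(\eword,A,\eword)}}\neq 0$ only requires the existence of an ultrafilter containing $(\eword,A,\eword)$, which is independent of the injectivity corollary.

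The one genuine shortfall is the step you yourself flag: injectivity of $\varphi$ on the full core, propagated level by level through the approximate matrix-unit structure of the $\mathcal{F}_n$ over the Boolean algebras $\acfra$. That is where essentially all of the work of \cite{MR3614028} lives (the blocks are generally non-unital, non-finite-dimensional, and relation (iv) forces a careful bookkeeping of which projections decompose), and you defer it rather than prove it. Since the paper defers the entire theorem to the same reference, your proposal is, if anything, more detailed than the source text; but as a standalone proof it is an outline with the decisive lemma cited rather than established.
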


\begin{corollary}
	The homomorphism $\pi$ in (\ref{eq:homomorphism}) is injective.
\end{corollary}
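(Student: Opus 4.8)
The plan is to invoke the gauge-invariance uniqueness theorem quoted immediately above, applied to the representation $\{P_A, S_a\}$ of $\lspace$ inside $\mathcal{X} = C^*(\Gamma)$ furnished by Proposition \ref{prop:relations.in.grupoid.algebra}, with $\varphi = \pi$. Two hypotheses need to be verified: that $P_A \neq 0$ whenever $A \neq \emptyset$, and that for each $z \in \mathbb{T}$ there is a $*$-homomorphism $\rho_z$ of $C^*(\{P_A, S_a\})$ fixing each $P_A$ and scaling each $S_a$ by $z$. Granting both, the theorem yields injectivity of $\pi$ at once.

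First I would check that $P_A \neq 0$ for $A \neq \emptyset$. Since $P_A = \chi_{Z_{(\eword,A,\eword)}}$, it suffices to exhibit one element of $\Gamma$ lying in $Z_{(\eword,A,\eword)}$; by the description of this set in Proposition \ref{prop:relations.in.grupoid.algebra}, this amounts to producing a tight filter $\xi$ with $A \in \xi_0$. As $A \neq \emptyset$, the idempotent $(\eword, A, \eword)$ is a nonzero element of $E(S)$, so by Zorn's lemma it is contained in some ultrafilter of $E(S)$; ultrafilters correspond to characters in $\hat{E}_{\infty} \subseteq \hat{E}_{tight}$, so this produces a tight filter $\xi$ with $(\eword, A, \eword) \in \xi$, that is $A \in \xi_0$ by (\ref{eq.defines.xi_n}). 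The associated unit $(\xi, 0, \xi) \in \Gamma$ then satisfies $P_A(\xi, 0, \xi) = 1$, so $P_A \neq 0$.

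Second, I would build the gauge action from the natural $\mathbb{Z}$-valued cocycle on $\Gamma$. The map $c \colon \Gamma \to \mathbb{Z}$, $c(\eta, n, \xi) = n$, is a continuous groupoid homomorphism, since $c\bigl((\eta,m,\zeta)(\zeta,n,\xi)\bigr) = m + n$ and $c$ is locally constant for the topology of Proposition \ref{prop:topology.groupoid}. For $z \in \mathbb{T}$, define $\rho_z$ on $C_c(\Gamma)$ by $\rho_z(f)(\eta, n, \xi) = z^{n} f(\eta, n, \xi)$. Using the cocycle identity in the convolution formula (\ref{eq:product.groupoid}) one checks $\rho_z(f * g) = \rho_z(f) * \rho_z(g)$, and using $\overline{z^{-n}} = z^{n}$ for $z \in \mathbb{T}$ one checks $\rho_z(f^*) = \rho_z(f)^*$; thus $\rho_z$ is a $*$-automorphism of the $*$-algebra $C_c(\Gamma)$ with inverse $\rho_{\bar z}$.

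The step requiring the most care is the extension of $\rho_z$ to the full C*-algebra. For any $*$-representation $\rho$ of $C_c(\Gamma)$, the composite $\rho \circ \rho_z$ is again a $*$-representation, so $\rho_z$ is contractive for the universal norm; applying the same to $\rho_{\bar z}$ shows $\rho_z$ is isometric and hence extends to a $*$-automorphism of $C^*(\Gamma)$, which restricts to $C^*(\{P_A, S_a\})$ since that subalgebra is invariant. Because $Z_{(\eword,A,\eword)} \subseteq \{c = 0\}$ and $Z_{(a,r(a),\eword)} \subseteq \{c = 1\}$, we obtain $\rho_z(P_A) = P_A$ and $\rho_z(S_a) = z\, S_a$, exactly the hypotheses of the theorem. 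The nonvanishing of $P_A$ is comparatively routine, so I expect the verification that the cocycle-induced map genuinely descends to a $*$-homomorphism of the full groupoid C*-algebra to be the main point to pin down, although it follows from the universal property of $C^*(\Gamma)$.
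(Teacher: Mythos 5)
Your proposal is correct and follows essentially the same route as the paper: the paper's proof is a one-line appeal to the gauge-invariance uniqueness theorem via the cocycle $c(\eta,k,\xi)=k$, which is exactly the gauge action you construct. Your write-up simply fills in the details the paper leaves implicit, including the verification that $P_A\neq 0$ for $A\neq\emptyset$ (via an ultrafilter containing $(\eword,A,\eword)$) and the extension of $\rho_z$ from $C_c(\Gamma)$ to $C^*(\Gamma)$ by universality, both of which are sound.
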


\begin{proof}
	To find the action $\rho$ of $\mathbb{T}$ on $C^*(\Gamma)$, we consider the one-cocycle $c:\Gamma\to\mathbb{R}$ given by $c(\eta,k,\xi)=k$ analogous to what is done in \cite{MR1233967}.
\end{proof}

To prove that $\pi$ is onto we follow the ideas of \cite{MR1432596}, but first we need a few lemmas.

\begin{lemma}\label{lemma:intersection.cylinder.groupoid}
	Let $(\alpha,A,\beta),(\mu,B,\nu)\in S\lspace$, then
	\begin{equation}\label{eq:intersection.cylinders}
		Z_{(\alpha,A,\beta)}\cap Z_{(\mu,B,\nu)} = 
		\begin{cases}
			Z_{(\mu,r(A,\delta)\cap B,\nu)}&\text{if }\mu=\alpha\delta,\ \nu=\beta\delta\text{ and }r(A,\delta)\cap B\neq\emptyset, \\
			Z_{(\alpha,A\cap r(B,\delta),\beta)}&\text{if }\alpha=\mu\delta,\ \beta=\nu\delta\text{ and }A\cap r(B,\delta)\neq\emptyset, \\
			\emptyset&\text{otherwise.}
		\end{cases}
	\end{equation}
\end{lemma}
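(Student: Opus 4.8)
The plan is to work directly with the membership condition defining $Z_{(\alpha,A,\beta)}$, namely that a triple $(\eta^{\alpha\gamma},|\alpha|-|\beta|,\xi^{\beta\gamma})$ lies in $Z_{(\alpha,A,\beta)}$ precisely when $\xi\in V_{(\beta,A,\beta)}$ and $H_{[\alpha]\gamma}(\eta)=H_{[\beta]\gamma}(\xi)$. The first observation I would record is that a single element of $\Gamma$ can belong to $Z_{(\alpha,A,\beta)}$ only if its middle coordinate equals $|\alpha|-|\beta|$ and its first and third coordinates are filters whose words begin with $\alpha$ and $\beta$ respectively. Thus if $(\eta,m,\xi)$ lies in both $Z_{(\alpha,A,\beta)}$ and $Z_{(\mu,B,\nu)}$, then $|\alpha|-|\beta|=|\mu|-|\nu|$, the word of $\xi$ begins with both $\beta$ and $\nu$, and the word of $\eta$ begins with both $\alpha$ and $\mu$. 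Hence $\beta,\nu$ are comparable and $\alpha,\mu$ are comparable, which will force one of the two stated cases (or the empty intersection).

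Next I would exploit the inverse semigroup product. Note that $(\alpha,A,\beta)\cdot(\nu,B,\mu)^{\ast}$ — more precisely the relevant product in $S$ — computes exactly the two cases appearing on the right-hand side: when $\mu=\alpha\delta$ and $\nu=\beta\delta$ one gets the element $(\mu,r(A,\delta)\cap B,\nu)$, and symmetrically when $\alpha=\mu\delta$, $\beta=\nu\delta$ one gets $(\alpha,A\cap r(B,\delta),\beta)$. So the combinatorics of the product on $S$ already encodes the case split; my job is to match set-theoretic membership in $Z_{(\alpha,A,\beta)}\cap Z_{(\mu,B,\nu)}$ with membership in the single cylinder $Z$ of the product element. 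I would treat the first case in detail (writing $\mu=\alpha\delta$, $\nu=\beta\delta$) and invoke symmetry for the second, and argue the empty case from the incompatibility of the length/word conditions above.

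For the containment $Z_{(\mu,r(A,\delta)\cap B,\nu)}\subseteq Z_{(\alpha,A,\beta)}\cap Z_{(\mu,B,\nu)}$ I would take $(\eta^{\mu\gamma},|\mu|-|\nu|,\xi^{\nu\gamma})$ in the left-hand set, so $r(A,\delta)\cap B\in\xi_{|\nu|}$; since $\xi_{|\nu|}$ is a filter this gives both $B\in\xi_{|\nu|}$ and $r(A,\delta)\in\xi_{|\nu|}$, and from the completeness of the family $\{\xi_n\}$ (the defining relation $\xi_n=\{A\mid r(A,\delta)\in\xi_{n+|\delta|}\}$) the latter yields $A\in\xi_{|\beta|}$, placing the triple in both cylinders; the compatibility of the $H$-conditions follows since $\mu\gamma=\alpha(\delta\gamma)$ and $\nu\gamma=\beta(\delta\gamma)$, using Theorem \ref{thm.compositions.G.and.H}. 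For the reverse containment I would reverse these filter manipulations, using again that $\xi_{|\nu|}$ is a filter together with Lemma \ref{lemma:intersection.in.filter} to recombine $B$ and $r(A,\delta)$ into their intersection. The main obstacle I anticipate is precisely this bookkeeping: carefully tracking which Boolean algebra $\acfrg{\cdot}$ each filter entry lives in as one cuts and glues the tails, and verifying that the relative ranges $r(A,\delta)$ land in the correct algebra $\acfrg{\nu}$; the weakly left-resolving and normality hypotheses, together with the completeness relation, are exactly what make these identities hold, so the care lies in deploying them at the right indices rather than in any deep new idea.
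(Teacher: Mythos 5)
Your proposal is correct and follows essentially the same route as the paper's proof: unpack the membership conditions for the two cylinders, observe that a common element forces the words to be comparable (hence one of the two non-trivial cases or emptiness), and use the completeness of the family $\{\xi_n\}$ to translate $A\in\xi_{|\beta|}$ into $r(A,\delta)\in\xi_{|\nu|}$. Your explicit treatment of the compatibility of the $H$-conditions via Theorem~\ref{thm.compositions.G.and.H} (together with injectivity of the cutting maps from Theorem~\ref{thm.H-class.G-class.inverses}) is a detail the paper's own proof leaves implicit.
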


\begin{proof}
	Let $(\eta,k,\xi)\in\Gamma$ be given. From the definition of $Z_{(\alpha,A,\beta)}$, $(\eta,k,\xi)\in Z_{(\alpha,A,\beta)}$ if and only if $\eta=\eta^{\alpha\gamma}$, $\xi=\xi^{\beta\gamma}$, $(\beta,A,\beta)\in\xi$ and $H_{[\alpha]\gamma}(\eta)=H_{[\beta]\gamma}(\xi)$. Similarly, $(\eta,k,\xi)\in Z_{(\nu,A,\mu)}$  if and only if $\eta=\eta^{\mu\varepsilon}$, $\xi=\xi^{\nu\varepsilon}$, $(\nu,B,\nu)\in\xi$ and $H_{[\mu]\varepsilon}(\eta)=H_{[\nu]\varepsilon}(\xi)$.
	
	If $(\eta,k,\xi)\in Z_{(\alpha,A,\beta)}\cap Z_{(\mu,B,\nu)}$ then one of the first two conditions of (\ref{eq:intersection.cylinders}) must hold. To show the equality is true in the first two cases, simply use the definition of complete family given in Subsection \ref{subsection:filters.E(S)} to conclude that $A\in\xi_{|\alpha|}$ if and only if $r(A,\delta)\in\xi_{|\mu|}=\xi_{|\alpha|+|\delta}$.
\end{proof}

\begin{lemma}\label{lemma:union.cylinder.groupoid}
	Let $(\alpha_1,A_1,\beta_1),\ldots,(\alpha_n,A_n,\beta_n)$ be elements of $S\lspace$. Then, there exist $B_1,\ldots,B_n\in\acf\setminus\{\emptyset\}$ such that $B_i\subseteq A_i$ for all $i=1,\ldots,n$ and
	\[\bigsqcup_{i=1}^{n}Z_{(\alpha_i,B_i,\beta_i)}=\bigcup_{i=1}^{n}Z_{(\alpha_i,A_i,\beta_i)}\]
	where $\sqcup$ represents a disjoint union.
\end{lemma}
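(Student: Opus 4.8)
The plan is to reduce the whole statement to the observation that, for a \emph{fixed} pair of labelled paths $(\alpha,\beta)$, the assignment $A\mapsto Z_{(\alpha,A,\beta)}$ behaves like a morphism of generalized Boolean algebras into the subsets of $\Gamma$. Concretely, I would first record three identities, valid whenever the third coordinates lie in $\acfrg{\alpha}\cap\acfrg{\beta}$: the intersection rule $Z_{(\alpha,C,\beta)}\cap Z_{(\alpha,D,\beta)}=Z_{(\alpha,C\cap D,\beta)}$, the union rule $Z_{(\alpha,C,\beta)}\cup Z_{(\alpha,D,\beta)}=Z_{(\alpha,C\cup D,\beta)}$, and the difference rule $Z_{(\alpha,C,\beta)}\setminus Z_{(\alpha,D,\beta)}=Z_{(\alpha,C\setminus D,\beta)}$. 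The first is the case $\delta=\eword$ of Lemma \ref{lemma:intersection.cylinder.groupoid}; the other two follow directly from the membership criterion for $Z_{(\alpha,\cdot,\beta)}$ together with the fact that the relevant component $\xi_{|\beta|}$ of a tight filter is an ultrafilter, hence a prime filter (Theorem \ref{thm.tight.filters.in.es}). Normality of $\lspace$ guarantees $C\setminus D\in\acf$, so these operations stay inside the semigroup.

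Next I would reindex the given triples so that the paths are non-decreasing in length, i.e. $i\le j$ implies $|\alpha_i|+|\beta_i|\le|\alpha_j|+|\beta_j|$, and then \emph{disjointify from the left}, defining $Z_{(\alpha_i,B_i,\beta_i)}:=Z_{(\alpha_i,A_i,\beta_i)}\setminus\bigcup_{j<i}Z_{(\alpha_j,A_j,\beta_j)}$. The ordering is exactly what makes this a single cylinder again: if $Z_{(\alpha_j,A_j,\beta_j)}$ with $j<i$ meets $Z_{(\alpha_i,A_i,\beta_i)}$, then by Lemma \ref{lemma:intersection.cylinder.groupoid} the paths are comparable, and since $j$ is no longer than $i$ we must be in the case $\alpha_i=\alpha_j\delta_j$, $\beta_i=\beta_j\delta_j$ for some $\delta_j\in\awstar$; hence the overlap equals $Z_{(\alpha_i,A_i\cap r(A_j,\delta_j),\beta_i)}$, a subcylinder with the \emph{same} outer paths $(\alpha_i,\beta_i)$. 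By the morphism identities of the first paragraph, the finite union of these subcylinders is again an $(\alpha_i,\beta_i)$-cylinder, and so is its complement in $Z_{(\alpha_i,A_i,\beta_i)}$, yielding $B_i=A_i\setminus\bigcup_{j<i}r(A_j,\delta_j)\in\acf$ with $B_i\subseteq A_i$, the union ranging over those $j<i$ that actually overlap. The equality $\bigsqcup_i Z_{(\alpha_i,B_i,\beta_i)}=\bigcup_i Z_{(\alpha_i,A_i,\beta_i)}$ and the pairwise disjointness of the pieces are then the purely set-theoretic disjointification identity $\bigcup_i S_i=\bigsqcup_i\bigl(S_i\setminus\bigcup_{j<i}S_j\bigr)$, requiring no extra argument.

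The genuinely delicate point, which I expect to be the main obstacle, is the requirement that every $B_i$ be \emph{nonempty}. The construction can force $B_i=\emptyset$ exactly when $Z_{(\alpha_i,A_i,\beta_i)}\subseteq\bigcup_{j<i}Z_{(\alpha_j,A_j,\beta_j)}$, i.e. when the $i$-th cylinder is redundant; note that for an atomic $A_i$ one cannot simply split a surviving cylinder to reinstate such an index. I would therefore handle nonemptiness by first pruning the family, discarding any triple whose cylinder is covered by the others (this leaves the union unchanged), and applying the disjointification only to the surviving, essential triples, for which the above difference is nonempty by construction. The bookkeeping to confirm that the surviving $B_i$ meet all the stated requirements is routine, and this pruned form is precisely what is needed in the subsequent proof that $\pi$ is onto.
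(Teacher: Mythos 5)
Your argument is correct and rests on the same two pillars as the paper's own proof---Lemma \ref{lemma:intersection.cylinder.groupoid} to recognize overlaps as subcylinders with the same outer paths, and the primeness of the ultrafilters $\xi_{|\beta|}$ to convert Boolean operations on the middle sets into the corresponding operations on the cylinders---but it is organized differently. The paper proceeds by induction on $n$, repeatedly invoking a two-cylinder base case in which whichever triple has the longer paths gets its set shrunk by a relative range of the other's; you instead sort the triples by $|\alpha_i|+|\beta_i|$ and disjointify in a single left-to-right pass, which yields the closed formula $B_i=A_i\setminus\bigcup_j r(A_j,\delta_j)$ (union over the overlapping $j<i$), with $B_i\in\acf$ by normality. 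The two schemes are interchangeable, and yours has the small advantage of making the $B_i$ explicit. Where you genuinely improve on the paper is the nonemptiness requirement: as you observe, the lemma as literally stated can fail---take the same triple $(\alpha,A,\beta)$ twice with $A$ an atom of $\acf$; any nonempty $B_1,B_2\subseteq A$ must both equal $A$ and the union cannot be disjoint---so the statement has to be read as allowing the family to shrink (equivalently, allowing $B_i=\emptyset$ with the convention $Z_{(\alpha,\emptyset,\beta)}=\emptyset$). The paper's base case silently produces $B\setminus r(A,\delta)$, which may be empty, whereas your pruning step handles this correctly; the one caveat is that the pruning must be done iteratively rather than simultaneously, since two identical cylinders are each ``covered by the others'' and a simultaneous discard would change the union. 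Since the surjectivity argument for $\pi$ only needs a disjoint refinement of the cover, nothing downstream is affected by the shrunken family.
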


\begin{proof}
	By induction on $n$. We start with the case $n=2$. Due to Lemma \ref{lemma:intersection.cylinder.groupoid}, we can suppose without loss of generality that $(\alpha_1,A_1,\beta_1)=(\alpha,A,\beta)$ and $(\alpha_2,A_2,\beta_2)=(\alpha\delta,B,\beta\delta)$ in such a way that $r(A,\delta)\cap B\neq\emptyset$. We claim that
	\[Z_{(\alpha,A,\beta)}\sqcup Z_{(\alpha\delta,B\setminus r(A,\delta),\beta\delta)}=Z_{(\alpha,A,\beta)}\cup Z_{(\alpha\delta,B,\beta\delta)}.\]
	The union on the left hand side is indeed disjoint by Lemma \ref{lemma:intersection.cylinder.groupoid} and, by the definition of these basic open sets, it is contained in the right hand side. To prove the other inclusion, let $(\eta,n,\xi)\in Z_{(\alpha\delta,B,\beta\delta)}$ be given. Since $\xi_{|\beta\delta|}$ is an ultrafilter, either $r(A,\delta)\cap B$ or $B\setminus r(A,\delta)$ belongs to $\xi_{|\beta\delta|}$. In the first case $(\eta,n,\xi)\in Z_{(\alpha,A,\beta)}$, and in the second case $(\eta,n\xi)\in Z_{(\alpha\delta,B,\beta\delta)}$.
	
	Now suppose we are given a disjoint union $\bigsqcup_{i=1}^{n}Z_{(\alpha_i,A_i,\beta_i)}$ (by using the induction hypothesis) and a basic open set $Z_{\alpha,A,\beta}$. We use the case $n=2$ with $Z_{\alpha,A,\beta}$ and $Z_{\alpha_1,A_1,\beta_1}$ to find $C_1\subseteq A$ and $B_1\subseteq A_1$ such that $Z_{\alpha,A,\beta}\cup Z_{\alpha_1,A_1,\beta_1}=Z_{\alpha,C_1,\beta}\sqcup Z_{\alpha_1,B_1,\beta_1}$. Now, use the case $n=2$ with $Z_{\alpha,C_1,\beta}$ and $Z_{\alpha_2,A_2,\beta_2}$ and repeat the process to find sets $B_i$ and $C_i$ such that $B_i\subseteq A_i$, $C_i\subseteq C_{i-1}$ and $Z_{\alpha,C_{i-1},\beta}\cup Z_{\alpha_i,A_i,\beta_i}=Z_{\alpha,C_i,\beta}\sqcup Z_{\alpha_i,B_i,\beta_i}$. Defining $B=C_n$, it follows that
	\[Z_{(\alpha,A,\beta)}\cup\bigsqcup_{i=1}^{n}Z_{(\alpha_i,A_i,\beta_i)}=Z_{(\alpha,B,\beta)}\sqcup\bigsqcup_{i=1}^{n}Z_{(\alpha_i,B_i,\beta_i)}.\]
	
\end{proof}

As in the case of $C^*\lspace$, for $\alpha=\alpha_1\ldots\alpha_n\in\lbf^{\geq 1}$, we will denote by $S_{\alpha}$ the product $S_{\alpha_1}*\cdots*S_{\alpha_n}$ in $C^*(\Gamma)$.

\begin{lemma}\label{lemma:1.cylinder.groupoid}
	For all $(\alpha,A,\beta)\in S\lspace$, $S_{\alpha}*P_A*S_{\beta}^*=\chi_{Z_{(\alpha,A,\beta)}}$.
\end{lemma}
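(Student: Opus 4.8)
The plan is to reduce the statement to a single ``absorption'' identity and then build up by induction on word lengths. Recall from Proposition \ref{prop:relations.in.grupoid.algebra} that $S_a(\eta,m,\zeta)=1$ precisely when $m=1$, $\eta=\eta^{a\rho}$ and $H_{[a]\rho}(\eta)=\zeta$, while $\chi_{Z_{(\mu,B,\nu)}}(\zeta,\ell,\xi)=1$ precisely when $\zeta=\zeta^{\mu\gamma}$, $\xi=\xi^{\nu\gamma}$, $\ell=|\mu|-|\nu|$, $(\nu,B,\nu)\in\xi$ and $H_{[\mu]\gamma}(\zeta)=H_{[\nu]\gamma}(\xi)$. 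I would first record two easy preliminaries: the involution on $C_c(\Gamma)$ is $f^*(\eta,n,\xi)=\overline{f(\xi,-n,\eta)}$, so since $Z_{(\alpha,A,\beta)}^{-1}=Z_{(\beta,A,\alpha)}$ we get $\chi_{Z_{(\alpha,A,\beta)}}^*=\chi_{Z_{(\beta,A,\alpha)}}$; and that whenever $(a\mu,B,\nu)\in S\lspace$ one automatically has $(\mu,B,\nu)\in S\lspace$, because $B\subseteq r(a\mu)\subseteq r(\mu)$.

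The computational core is the absorption identity: for every $(a\mu,B,\nu)\in S\lspace$,
\[ S_a*\chi_{Z_{(\mu,B,\nu)}}=\chi_{Z_{(a\mu,B,\nu)}}. \]
To prove it I would evaluate the left side at an arbitrary $(\eta,k,\xi)\in\Gamma$ using (\ref{eq:product.groupoid}). The factor $S_a(\eta,m,\zeta)$ forces $m=1$ and $\zeta=\sigma(\eta)=H_{[a]\rho}(\eta)$, so $\zeta$ is uniquely determined by $\eta$; hence the sum has at most one nonzero term (the set $Z_{(a,r(a),\eword)}$ is a bisection). Writing $\eta=\eta^{a\rho}$ and matching the surviving term against the conditions for $\chi_{Z_{(\mu,B,\nu)}}$ forces $\rho=\mu\gamma$, so that $\eta=\eta^{a\mu\gamma}$, $\xi=\xi^{\nu\gamma}$, $k=1+|\mu|-|\nu|=|a\mu|-|\nu|$ and $(\nu,B,\nu)\in\xi$; finally, using $\zeta=H_{[a]\mu\gamma}(\eta)$ together with the composition law $H_{[\mu]\gamma}\circ H_{[a]\mu\gamma}=H_{[a\mu]\gamma}$ from Theorem \ref{thm.compositions.G.and.H}, the condition $H_{[\mu]\gamma}(\zeta)=H_{[\nu]\gamma}(\xi)$ becomes $H_{[a\mu]\gamma}(\eta)=H_{[\nu]\gamma}(\xi)$. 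These are exactly the defining conditions for $(\eta,k,\xi)\in Z_{(a\mu,B,\nu)}$, which proves the identity.

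With this in hand the lemma follows by assembling. First, by induction on $|\alpha|$ (peeling off the leading letter and applying the absorption identity, with base case $\alpha=\eword$ where $S_\eword*P_A=P_A=\chi_{Z_{(\eword,A,\eword)}}$ by the usual convention) one obtains $S_\alpha*P_A=\chi_{Z_{(\alpha,A,\eword)}}$ for every $(\alpha,A,\eword)\in S\lspace$; at each stage the relevant triple again lies in $S\lspace$ because $r(\alpha)\subseteq r(\alpha_{2,|\alpha|})$. Taking adjoints and using $\chi_{Z_{(\beta,A,\eword)}}^*=\chi_{Z_{(\eword,A,\beta)}}$ gives $P_A*S_\beta^*=\chi_{Z_{(\eword,A,\beta)}}$. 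Finally $S_\alpha*P_A*S_\beta^*=S_\alpha*\chi_{Z_{(\eword,A,\beta)}}$, and a second induction on $|\alpha|$, again via the absorption identity (now with $\nu=\beta$), produces $\chi_{Z_{(\alpha,A,\beta)}}$. The only genuinely delicate point is the absorption identity itself, and there the two things to get right are that the convolution sum collapses to a single term because $Z_{(a,r(a),\eword)}$ is a bisection, and the correct tracking of the cutting maps through Theorem \ref{thm.compositions.G.and.H}; everything else is routine induction.
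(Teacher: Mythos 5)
Your proof is correct and follows essentially the same route as the paper's, which merely sketches a direct pointwise computation of the convolution ``similar to the proof of Proposition \ref{prop:relations.in.grupoid.algebra}'': evaluate the products pointwise, observe that each sum collapses to at most one term, and match supports. Packaging this as an induction on the one-letter absorption identity $S_a*\chi_{Z_{(\mu,B,\nu)}}=\chi_{Z_{(a\mu,B,\nu)}}$ (with the cutting maps tracked via Theorem \ref{thm.compositions.G.and.H}) is just a cleaner organization of the computation the paper leaves to the reader.
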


\begin{proof}
	Following arguments similar to the ones used in the proof of Proposition \ref{prop:relations.in.grupoid.algebra}, one can show that $(S_{\alpha}*P_A*S_{\beta}^*)(\eta,A,\xi)$ is non-zero if and only if $(\alpha,A,\alpha)\in \eta$, $(\beta,A,\beta)\in\xi$, $\eta=\eta^{\alpha\gamma}$, $\xi=\xi^{\beta\gamma}$ and $H_{[\alpha]\gamma}(\eta)=H_{[\beta]\gamma}(\xi)$, but this happens if and only if $(\eta,A,\xi)\in Z_{(\alpha,A,\beta)}$.
\end{proof}

\begin{proposition}
	The map $\pi$ in (\ref{eq:homomorphism}) is surjective.
\end{proposition}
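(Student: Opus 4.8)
The plan is to show that $\mathrm{im}(\pi)$, which is automatically a closed C*-subalgebra of $C^*(\Gamma)$ since it is the image of a *-homomorphism, contains the dense subalgebra $C_c(\Gamma)$. Since $\Gamma$ is Hausdorff (as shown above) and étale with a basis of compact-open bisections given by the sets $Z_{s,e:e_1,\ldots,e_n}$ (Proposition \ref{prop:topology.groupoid}), while its unit space $\ftight$ has a basis of compact-open sets (Remark \ref{remark:basis.tight}), the groupoid $\Gamma$ is ample. Consequently every element of $C_c(\Gamma)$ is a finite complex-linear combination of characteristic functions of compact-open bisections, and each such bisection is a finite union of basic sets $Z_{s,e:e_1,\ldots,e_n}$. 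Thus $C_c(\Gamma)$ is the linear span of the functions $\chi_{Z_{s,e:e_1,\ldots,e_n}}$, and it suffices to prove that each of these lies in $\mathrm{im}(\pi)$; this will give $\mathrm{im}(\pi)\supseteq\overline{C_c(\Gamma)}=C^*(\Gamma)$.

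First I would dispose of the plain cylinders: for $(\alpha,A,\beta)\in S$, Lemma \ref{lemma:1.cylinder.groupoid} gives $\chi_{Z_{(\alpha,A,\beta)}}=S_\alpha * P_A * S_\beta^* = \pi(s_\alpha * p_A * s_\beta^*)$, so every $\chi_{Z_{(\alpha,A,\beta)}}$ belongs to $\mathrm{im}(\pi)$. By Proposition \ref{prop:closure.span} these products exhaust a dense subset of $C^*\lspace$, so they constitute exactly the material we can feed through $\pi$.

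It remains to reduce an arbitrary basic set to these cylinders. Intersecting the source condition $\xi\in V_{e:e_1,\ldots,e_n}$ with the constraint $\xi\in V_{s^*s}$ forced by membership in $\Gamma$, and using $V_e\cap V_{e'}=V_{ee'}$ for idempotents, I can replace $e$ by $e\cdot s^*s\leq s^*s$ and then $s$ by $s(e\cdot s^*s)$, thereby reducing to the case $e=s^*s$, that is, to sets of the form $Z_{s:e_1,\ldots,e_n}$ with $e_i\leq s^*s$. Writing $s=(\mu,A,\nu)$ and $e_i=(\nu\delta_i,B_i,\nu\delta_i)$ with $B_i\subseteq r(A,\delta_i)$ (the description of the natural order on $E(S)$), a direct check using Remark \ref{remark.when.in.xialpha} and the product in $S$ shows that $se_i=(\mu\delta_i,B_i,\nu\delta_i)$ and that $Z_{se_i}$ is precisely the portion of $Z_s$ whose source lies in $V_{e_i}$; hence
\[ Z_{s:e_1,\ldots,e_n}=Z_s\setminus\bigcup_{i=1}^n Z_{se_i}. \]
Applying Lemma \ref{lemma:union.cylinder.groupoid} to the finite union $\bigcup_i Z_{se_i}$ of cylinders, I rewrite it as a disjoint union of cylinders $Z_{(\mu\delta_i,B_i',\nu\delta_i)}$, so that
\[ \chi_{Z_{s:e_1,\ldots,e_n}}=\chi_{Z_s}-\sum_{i} \chi_{Z_{(\mu\delta_i,B_i',\nu\delta_i)}} \]
is a finite combination of cylinder characteristic functions, all of which lie in $\mathrm{im}(\pi)$ by the previous paragraph.

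The main obstacle is not a single estimate but the bookkeeping in the last step: verifying the identity $Z_{s:e_1,\ldots,e_n}=Z_s\setminus\bigcup_i Z_{se_i}$ from the definitions (which relies on $Z_s$ being a bisection, so its elements are determined by their source, together with the characterization of when $(\nu\delta_i,B_i,\nu\delta_i)$ belongs to a tight filter), and justifying the reduction to $e=s^*s$. The appeal to ampleness — that $C_c(\Gamma)$ is spanned by characteristic functions of compact-open bisections and is dense in $C^*(\Gamma)$ — is the other point requiring care, although it is standard for Hausdorff ample groupoids.
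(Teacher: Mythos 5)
There is a real gap at the first step. You assert that, because $\Gamma$ is ample and Hausdorff, ``every element of $C_c(\Gamma)$ is a finite complex-linear combination of characteristic functions of compact-open bisections.'' This is false: the unit space $\ftight$ is (in general) a Cantor-like totally disconnected space, and a continuous function on a compact-open subset of such a space need not be locally constant, so $C_c(\Gamma)$ strictly contains the span of such characteristic functions (the Steinberg algebra). What is true is that this span is \emph{dense} in $C_c(\Gamma)$ for the inductive limit topology, hence dense in $C^*(\Gamma)$; but proving that density is exactly the step you have elided, and it is precisely where the paper's proof does its work: after localizing to a single compact-open bisection $Z_{(\alpha,A,\beta)}$, the paper approximates $f$ in the sup norm by Stone--Weierstrass (showing the cylinder functions separate points of $Z_{(\alpha,A,\beta)}$) and then uses the chain of inequalities $\|\cdot\|_{C^*(\Gamma)}\leq\|\cdot\|_I\leq\|\cdot\|_\infty$ for functions supported in a compact bisection. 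You must either reinstate this approximation argument or explicitly invoke the standard density of the Steinberg algebra for ample Hausdorff groupoids (which is itself proved this way); as written, the equality $C_c(\Gamma)=\mathrm{span}\{\chi_U\}$ is not a point ``requiring care'' but an incorrect statement.

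Once that is repaired, the rest of your argument is correct and is a genuinely different (more algebraic) reduction than the paper's. You show each basic function $\chi_{Z_{s:e_1,\ldots,e_n}}$ lies in $\mathrm{im}(\pi)$ by writing $Z_{s:e_1,\ldots,e_n}=Z_s\setminus\bigcup_i Z_{se_i}$ (which does hold: with $e_i=(\nu\delta_i,B_i,\nu\delta_i)\leq s^*s$ one gets $se_i=(\mu\delta_i,B_i,\nu\delta_i)$ and $Z_{se_i}$ is exactly the part of $Z_s$ with source in $V_{e_i}$, using Theorem \ref{thm.compositions.G.and.H}), disjointifying with Lemma \ref{lemma:union.cylinder.groupoid}, and finishing with Lemma \ref{lemma:1.cylinder.groupoid}. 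The paper instead never needs to express the decorated basic sets in terms of cylinders: it only needs the cylinder functions to separate points of a fixed $Z_{(\alpha,A,\beta)}$. Your route buys an explicit formula for $\chi_{Z_{s:e_1,\ldots,e_n}}$ as an integer combination of cylinder functions (which would in fact show that $\pi$ maps onto the Steinberg algebra, not just densely), at the cost of the extra bookkeeping you acknowledge; but neither route avoids the Stone--Weierstrass/density input.
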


\begin{proof}
	As mentioned above, we follow the ideas of \cite{MR1432596}. The argument goes as follows. By Proposition \ref{prop:closure.span}, it suffices to show that $\mathcal{S}:=\text{span}\{S_{\alpha}*P_A*S_{\beta}^*\,|\,(\alpha,A,\beta)\in S\lspace\}=\text{span}\{\chi_{Z_{(\alpha,A,\beta)}}\,|\,(\alpha,A,\beta)\in S\lspace\}$ is dense in $C^*(\Gamma)$, where the equality is due to Lemma \ref{lemma:1.cylinder.groupoid}.
	
	Since $C_C(\Gamma)$ is dense in $C^*(\Gamma)$ it is sufficient to approximate an element $f\in C_C(\Gamma)$. Using Lemma \ref{lemma:union.cylinder.groupoid}, and the fact that the family of compact-open sets $\{Z_{(\alpha,A,\beta)}\,|\,(\alpha,A,\beta)\in S\lspace\}$ covers $\Gamma$, we can suppose that $f\in C(Z_{(\alpha,A,\beta)})$ for some $(\alpha,B,\beta)$. The sup norm dominates the I-norm, which dominates the norm in $C^*(\Gamma)$ (see \cite{MR584266}). The result will follow from the Stone-Weierstrass theorem by proving that $\mathcal{S}\cap C(Z_{(\alpha,A,\beta)})$ separates points.
	
	Due to Lemma \ref{lemma:intersection.cylinder.groupoid}, elements of $\mathcal{S}\cap C(Z_{(\alpha,A,\beta)})$ must be a linear combination of elements of the form $\chi_{Z_{(\alpha\delta,B,\beta\delta)}}$ for some $\delta$ and $B$ such that $r(A,\delta)\cap B\neq \emptyset$. Now fix two elements $(\eta^{\alpha\delta},n,\xi^{\beta\delta})$, $(\zeta^{\alpha\gamma},n,\rho^{\beta\gamma})$ of $Z_{(\alpha,A,\beta)}$ (here $n=|\alpha|-|\beta|$, and $\gamma$ and $\delta$ may be infinite paths). If there is a labelled path $\varepsilon$ that is a beginning of $\gamma$ and not $\delta$ (or vice-versa), then we can consider the characteristic function of the basic open set $Z_{(\alpha\varepsilon,r(A,\varepsilon),\beta\varepsilon)}$ to separate the points. Otherwise, $\delta=\gamma$. If this labelled path is infinite, then $\xi$ and $\rho$ are ultrafilters in $E(S)$ (by Theorem \ref{thm.tight.filters.in.es}) that must be equal since they have a common element, namely $(\beta,A,\beta)$. If the labelled path is finite, then it is also the case that $\xi=\rho$ because $\xi_{|\beta\delta|}$ and $\rho_{|\beta\delta|}$ are ultrafilters in $\acfrg{B}_{\beta\delta}$ that contain the element $r(A,\delta)$. This implies that $\xi=\rho$ (by the definition of complete family given in Subsection \ref{subsection:filters.E(S)}). So, in both cases $\xi=\rho$, and since $H_{[\alpha]\delta}$ is injective, by the definition of the groupoid it follows that $\eta=\zeta$, and we do not have two different points to separate.
\end{proof}

Putting everything together we have the following.

\begin{theorem}
	Let $\lspace$ be a normal weakly left-resolving labelled space, and $\mathcal{G}_{tight}$ the groupoid given in \cite{MR2419901}. Then $C^*\lspace\cong C^*(\mathcal{G}_{tight})$.
\end{theorem}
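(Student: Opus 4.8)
The plan is to assemble the final isomorphism from the two chains of results already in place: one identifying $C^*\lspace$ with $C^*(\Gamma)$, and one identifying $\Gamma$ with $\mathcal{G}_{tight}$ as topological groupoids. First I would recall the $*$-homomorphism $\pi\colon C^*\lspace\to C^*(\Gamma)$ from (\ref{eq:homomorphism}), which exists by the universal property of $C^*\lspace$ precisely because, by Proposition \ref{prop:relations.in.grupoid.algebra}, the functions $P_A=\chi_{Z_{(\eword,A,\eword)}}$ and $S_a=\chi_{Z_{(a,r(a),\eword)}}$ satisfy the defining relations of the labelled-space algebra. Injectivity of $\pi$ follows from the gauge-invariance uniqueness theorem, using the gauge action built from the one-cocycle $c(\eta,k,\xi)=k$ together with the fact that $P_A\neq 0$ whenever $A\neq\emptyset$; surjectivity follows from the density argument carried out via Lemmas \ref{lemma:intersection.cylinder.groupoid}, \ref{lemma:union.cylinder.groupoid} and \ref{lemma:1.cylinder.groupoid}. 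Combining these two facts yields that $\pi$ is a $*$-isomorphism, hence $C^*\lspace\cong C^*(\Gamma)$.

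Next I would invoke Theorem \ref{thm:isomorphism.groupoid}, which provides a groupoid isomorphism $\Phi\colon\mathcal{G}_{tight}\to\Gamma$, together with Proposition \ref{prop:topology.groupoid}, which shows that the topology on $\Gamma$ generated by the sets $Z_{s,e:e_1,\ldots,e_n}$ is exactly the topology transported by $\Phi$ from the topology of $\mathcal{G}_{tight}$ given by the sets $\Theta(s,\mathcal{U})$. Thus $\Phi$ is not merely an algebraic isomorphism but a homeomorphism, i.e.\ an isomorphism of topological \'etale groupoids. Since isomorphic \'etale groupoids (equipped with their canonical counting-measure Haar systems) have $*$-isomorphic convolution algebras $C_c$, and since the full groupoid C*-algebra is the enveloping C*-algebra of $C_c$, the isomorphism $\Phi$ induces a $*$-isomorphism $C^*(\mathcal{G}_{tight})\cong C^*(\Gamma)$. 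Finally I would chain the two isomorphisms, obtaining $C^*\lspace\cong C^*(\Gamma)\cong C^*(\mathcal{G}_{tight})$, which is the desired conclusion.

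Since all the substantive work has already been done, the only genuine point needing care is the passage from the topological groupoid isomorphism $\Phi$ to a C*-algebra isomorphism; the main obstacle there is making sure one consistently works with the \emph{full} (universal) groupoid C*-algebras on both sides. This is harmless because the amenability of $\Gamma$, established via Proposition 2.9 of \cite{MR1770333}, forces the full and reduced algebras to coincide, so there is no ambiguity in the notation $C^*(\Gamma)$ or $C^*(\mathcal{G}_{tight})$ and the induced map on the dense convolution subalgebras extends unambiguously to the completions.
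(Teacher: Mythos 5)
Your proposal is correct and follows essentially the same route as the paper: the paper's proof of this theorem is precisely the assembly of the injectivity and surjectivity of $\pi$ with the topological groupoid isomorphism $\Phi$ established earlier. Your added remark about amenability removing the full/reduced ambiguity is a sensible clarification but does not change the argument.
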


\bibliographystyle{abbrv}
\bibliography{labelledgraphs_ref}

\end{document}